\author{Rose Wagstaffe}
\theoremstyle{definition}	
\newtheorem{definition}{Definition}[section]
\theoremstyle{plain}
\newtheorem{lemma}[definition]{Lemma}
\newtheorem{prop}[definition]{Proposition}
\newtheorem{theorem}[definition]{Theorem}
\newtheorem{cor}[definition]{Corollary}
\theoremstyle{remark}
\newtheorem{remark}[definition]{Remark}
\newtheorem{notation}[definition]{Notation}
\newtheorem{assumption}[definition]{Assumption}
\theoremstyle{plain}
\newtheorem{example}[definition]{Example}
\theoremstyle{definition}
\theoremstyle{remark}
\renewenvironment{proof}{\noindent {\bf{Proof.}}}{\hspace*{3mm}{$\Box$}{\vspace{9pt}}}
\begin{document}
	
	
	\setcounter{page}{1}
	
	\title{A monoidal analogue of the 2-category anti-equivalence between $\mathbb{ABEX}$ and $\mathbb{DEF}$}
	
	\date{}
	
	\maketitle
	
 \begin{abstract}
 	We prove that the 2-category of skeletally small abelian categories with exact monoidal structures is anti-equivalent to the 2-category of fp-hom-closed definable additive categories satisfying an exactness criterion.
 	
 	For a fixed finitely accessible category $\mathcal{C}$ with products and a monoidal structure satisfying the appropriate assumptions, we provide bijections between the fp-hom-closed definable subcategories of $\mathcal{C}$, the Serre tensor-ideals of $\mathcal{C}^{\mathrm{fp}}\hbox{-}\mathrm{mod}$ and the closed subsets of a Ziegler-type topology. 
 	
 	For a skeletally small preadditive category $\mathcal{A}$ with an additive, symmetric, rigid monoidal structure we show that elementary duality induces a bijection between the fp-hom-closed definable subcategories of $\mathrm{Mod}\hbox{-}\mathcal{A}$ and the definable tensor-ideals of $\mathcal{A}\hbox{-}\mathrm{Mod}$. 
 \end{abstract}	
	
\section{Introduction}

In \cite{PR10}, Prest and Rajani define an anti-equivalence between the 2-category $\mathbb{ABEX}$ with objects given by skeletally small abelian categories and morphisms given by additive exact functors and the 2-category $\mathbb{DEF}$ with objects given by definable categories and morphisms given by additive functors which commute with direct products and direct limits. In both cases the 2-morphisms are given by natural transformations. Under this equivalence, a finitely accessible category with products $\mathcal{C}$, which by definition is definable, corresponds to the module category $\mathcal{C}^{\mathrm{fp}}\hbox{-}\mathrm{mod}$ of finitely presented additive functors from the full subcategory of $\mathcal{C}$ given by the finitely presented objects, $\mathcal{C}^{\mathrm{fp}}$, to the category of abelian groups, $\mathbf{Ab}$. In the case that $\mathcal{C}^{\mathrm{fp}}$ has a monoidal structure, we may induce a monoidal structure on $\mathcal{C}^{\mathrm{fp}}\hbox{-}\mathrm{mod}$ via Day convolution product (see \cite{D70}). What's more, the definable subcategories of $\mathcal{C}$ correspond to the Serre subcategories of $\mathcal{C}^{\mathrm{fp}}\hbox{-}\mathrm{mod}$ (e.g. see \cite{BRB}, Theorem 12.4.1 and Corollary 12.4.2) and the equivalence maps a definable subcategory $\mathcal{D} \subseteq \mathcal{C}$ to the functor category $\mathrm{fun}(\mathcal{D}) \simeq \mathcal{C}^{\mathrm{fp}}\hbox{-}\mathrm{mod}/\mathsf{S}$ where $\mathsf{S}$ is the Serre subcategory corresponding (by annihilation) to $\mathcal{D}$ (e.g. see \cite{P11}, Theorem 12.10). When the Serre subcategory $\mathsf{S}$ is a Serre tensor-ideal, the functor category $\mathrm{fun}(\mathcal{D}) \simeq \mathcal{C}^{\mathrm{fp}}\hbox{-}\mathrm{mod}/\mathsf{S}$ has a monoidal structure (see Definition \ref{remark strict}). Therefore, it is interesting to ask: Which definable subcategories of $\mathcal{C}$ correspond to Serre tensor-ideals of $\mathcal{C}^{\mathrm{fp}}\hbox{-}\mathrm{mod}$? The answer is given in Theorem \ref{thm}. More generally, we define 2-categories $\mathbb{ABEX}^{\otimes}$ and $\mathbb{DEF}^{\otimes}$ and give a 2-category anti-equivalence which can be viewed as a monoidal analogue of the 2-category anti-equivalence between $\mathbb{ABEX}$ and $\mathbb{DEF}$ (Section \ref{section 2-cat equiv}). 

In order for the 2-category anti-equivalence between $\mathbb{ABEX}^{\otimes}$ and $\mathbb{DEF}^{\otimes}$ to hold, we must take the objects of $\mathbb{DEF}^{\otimes}$ to be the definable subcategories $\mathcal{D} \subseteq \mathcal{C}$ which are fp-hom-closed and satisfy an exactness criterion. Here $\mathcal{C}$ is a finitely accessible category with products and an additive, symmetric, closed monoidal structure such that the subcategory of finitely presented objects, $\mathcal{C}^{\mathrm{fp}}$, forms a symmetric monoidal subcategory. By the original anti-equivalence, $\mathcal{D}$ and $\mathrm{Ex}(\mathrm{fun}(\mathcal{D}),\mathbf{Ab})$ are equivalent definable categories. However, the exactness criterion is necessary to ensure that the fp-hom-closed property is preserved (Theorem \ref{TFAE SE tensor ideal} and Proposition \ref{Lem exactness if}).  

In practice, many fp-hom-closed definable subcategories do not satisfy the exactness criterion. Indeed, the exactness crition for $\mathcal{D}$ implies that the monoidal structure on $\mathrm{fun}(\mathcal{D})$ is exact (Theorem \ref{prop exactness only if}), when in general this monoidal structure is only right exact. In Section \ref{without exactness}, we discuss the relationship between definability and monoidal structures for fixed $\mathcal{C}$ without assuming the exactness criterion. We define a `coarser-version' of the Ziegler spectrum (Section \ref{section ziegler}) and provide bijections between the fp-hom-closed definable subcategories of $\mathcal{C}$, the Serre tensor-ideals of $\mathcal{C}^{\mathrm{fp}}\hbox{-}\mathrm{mod}$ and the closed subsets of this Ziegler-type topology. We also consider what can be said under the additional assumption that $\mathcal{C}^{\mathrm{fp}}$ is rigid monoidal (Section \ref{section rigid}). Here a definable subcategory is fp-hom-closed if and only if it is a tensor-ideal (Corollary \ref{Cfp rigid case}). Furthermore, given a skeletally small preadditive category $\mathcal{A}$ with an additive, symmetric, rigid monoidal structure, we provide a bijection between the fp-hom-closed definable subcategories of $\mathrm{Mod}\hbox{-}\mathcal{A}$ and the definable tensor-ideals of $\mathcal{A}\hbox{-}\mathrm{Mod}$ (Section \ref{section duality}). 

Finally, in Section \ref{section examples} we will consider some examples. First we explore the examples given by the tensor product of $R$-modules for a commutative ring $R$ and secondly we give an example where $\mathcal{C}^{\mathrm{fp}}$ is rigid.

\subsection{Acknowledgements}

I would like to express my great appreciation to Prof. Mike Prest, my PhD supervisor, for his patience and guidance. I would also like to thank EPSRC and the University of Manchester for providing the funding for my PhD. 

\section{Preliminaries}

\subsection{Day convolution product} \label{Day convolution product}

Given a finitely accessible category, $\mathcal{C}$, with products and a monoidal structure, we will use Day convolution product to induce a monoidal structure on the associated functor category. 

\begin{theorem} \label{Day} (\cite{D70}, Theorem 3.3 and Theorem 3.6)
	Given a complete and cocomplete closed symmetric monoidal category $V$, and a small (symmetric) monoidal $V$-enriched category $\mathscr{C}$, the category of $V$-enriched functors from $\mathscr{C}$ to $V$, $V[\mathscr{C},V]$, is a monoidal category admitting a (symmetric) closed monoidal structure.
\end{theorem}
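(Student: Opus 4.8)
The plan is to realise the monoidal structure on $V[\mathscr{C},V]$ as the \emph{Day convolution product}, defined pointwise by a coend. Given $V$-functors $F,G\colon \mathscr{C}\to V$, I would set
$$(F\ast G)(c)=\int^{x,y\in\mathscr{C}}\mathscr{C}(x\otimes y,c)\otimes\bigl(Fx\otimes Gy\bigr),$$
where the inner $\otimes$ is the monoidal product of $V$ and $\mathscr{C}(x\otimes y,c)\in V$ is the hom-object; this coend exists because $V$ is cocomplete. Equivalently, $F\ast G$ is the left Kan extension of the external product $F\boxtimes G\colon \mathscr{C}\otimes\mathscr{C}\to V$, $(x,y)\mapsto Fx\otimes Gy$, along the tensor functor $\otimes\colon\mathscr{C}\otimes\mathscr{C}\to\mathscr{C}$. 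The unit is the representable functor $J=\mathscr{C}(\mathbf{1},-)$ at the monoidal unit $\mathbf{1}$ of $\mathscr{C}$.

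The first step is to record the universal property inherited from the Kan extension (equivalently, from the defining coend): there is a $V$-natural isomorphism
$$V[\mathscr{C},V](F\ast G,H)\;\cong\;V[\mathscr{C}\otimes\mathscr{C},V]\bigl(F\boxtimes G,\,H\circ\otimes\bigr).$$
From here I would build the coherence data. Associativity amounts to comparing $(F\ast G)\ast K$ with $F\ast(G\ast K)$; expanding both as iterated coends and applying the Fubini theorem for coends together with the associators of $\otimes$ in $\mathscr{C}$ and in $V$ produces a canonical isomorphism, and the pentagon axiom reduces to the corresponding pentagons in $\mathscr{C}$ and $V$. For the unit isomorphisms one computes $(J\ast F)(c)=\int^{x,y}\mathscr{C}(x\otimes y,c)\otimes\mathscr{C}(\mathbf{1},x)\otimes Fy$ and collapses the $x$-variable using the enriched co-Yoneda lemma, $\int^{x}\mathscr{C}(\mathbf{1},x)\otimes(-)\cong(-)|_{x=\mathbf 1}$, leaving $\int^{y}\mathscr{C}(\mathbf{1}\otimes y,c)\otimes Fy\cong Fc$; the triangle identity again descends from those in the base data.

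When $\mathscr{C}$ is symmetric, the braidings of $\mathscr{C}$ and $V$ combine with the symmetry of the coend in its two bound variables to give a braiding on $\ast$, and the hexagon axioms follow formally. For the closed structure I would define the internal hom by the end
$$[G,H](c)=\int_{y\in\mathscr{C}}\bigl[Gy,\,H(c\otimes y)\bigr],$$
where $[-,-]$ is the internal hom of $V$; this end exists because $V$ is complete. The required adjunction $V[\mathscr{C},V](F\ast G,H)\cong V[\mathscr{C},V](F,[G,H])$ is then obtained by a chain of canonical isomorphisms: expand $F\ast G$ as a coend, move it out of $[-,Hc]$ as an end by continuity of the internal hom, apply the tensor--hom adjunction of $V$, invoke Fubini to reorder the ends, and finally collapse one variable by the co-Yoneda lemma.

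The routine but genuinely delicate part --- and the main obstacle --- is verifying the coherence axioms at the level of coends: every associator and unitor is assembled from several Yoneda reductions and Fubini interchanges, and one must check that the resulting diagrams commute. This is a bookkeeping argument resting entirely on the enriched Yoneda lemma and the interchange of (co)limits, so it goes through precisely because $V$ is complete, cocomplete and closed; these hypotheses are exactly what guarantee the existence of the defining (co)ends and the validity of the adjunction manipulations.
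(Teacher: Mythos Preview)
The paper does not prove this theorem; it is quoted as a background result with a citation to Day's original 1970 paper (\cite{D70}, Theorems 3.3 and 3.6), and no proof is supplied or even sketched. Your proposal is a correct outline of precisely the construction carried out in that reference: the Day convolution product as a coend (equivalently a left Kan extension along $\otimes$), the representable unit, associativity and unit laws via Fubini and co-Yoneda, symmetry from the symmetries of $\mathscr{C}$ and $V$, and the internal hom given by the end formula with the adjunction verified by the standard coend/end calculus. So there is nothing to compare against in the present paper, and your sketch matches the cited source.
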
   

\begin{notation}
	Throughout our $V$ (as above) will be the category of abelian groups, $\mathbf{Ab}$. All our categories will be preadditive and all our functors will be additive. For a preadditive category $\mathscr{C}$ we denote by $\mathscr{C}(A,B)$ the abelian group of all morphisms in $\mathscr{C}$ from $A$ to $B$. When the category is clear from context we will simply write $(A,B)$. In addition, given preadditive categories $\mathcal{A}$ and $\mathcal{B}$, we will denote by $(\mathcal{A},\mathcal{B})$ the functor category of all additive functors from $\mathcal{A}$ to $\mathcal{B}$. The functor category $(\mathcal{A},\mathbf{Ab})$ will be denoted by $\mathcal{A}\hbox{-}\mathrm{Mod}$ and the subcategory of all finitely presented objects with be denoted by $\mathcal{A}\hbox{-}\mathrm{mod}:=(\mathcal{A}\hbox{-}\mathrm{Mod})^{\mathrm{fp}}$. Similarly, we denote by $\mathrm{Mod}\hbox{-}\mathcal{A}$ and $\mathrm{mod}\hbox{-}\mathcal{A}$ the categories $(\mathcal{A}^{\mathrm{op}},\mathbf{Ab})$ and $(\mathcal{A}^{\mathrm{op}},\mathbf{Ab})^{\mathrm{fp}}$ respectively. 
\end{notation}

Given a symmetric monoidal structure $(\otimes, 1)$ on a small additive category $\mathcal{A}$, we may refer to the Day convolution product on the functor category 
$\mathcal{A}\hbox{-}\mathrm{Mod}:=(\mathcal{A},\mathbf{Ab})$ as the `induced monoidal structure' or `induced tensor product' and denote the tensor product functor by $\otimes$. By Theorem \ref{Day}, the induced monoidal structure on $\mathcal{A}\hbox{-}\mathrm{Mod}$ is closed, that is, for every $X \in \mathcal{A}\hbox{-}\mathrm{Mod}$, $X \otimes -:\mathcal{A}\hbox{-}\mathrm{Mod} \to \mathcal{A}\hbox{-}\mathrm{Mod}$ has a right adjoint functor which we will denote by $\mathrm{hom}(X,-):\mathcal{A}\hbox{-}\mathrm{Mod} \to \mathcal{A}\hbox{-}\mathrm{Mod}$ and call the \textbf{internal hom-functor}.  

Since for each $F \in \mathcal{A}\hbox{-}\mathrm{Mod}$, $F \otimes -$ is a left adjoint, it is right exact and commutes with direct limits. Furthermore, by definition of Day convolution product, given representable functors $(A,-)$ and $(B,-)$ in $\mathcal{A} \hbox{-}\mathrm{Mod}$, we have $(A,-) \otimes (B,-) \cong (A \otimes B,-)$. Thus, by right exactness, if $F \in \mathcal{A}\hbox{-}\mathrm{mod}$ has presentation $(B,-) \xrightarrow{(f,-)} (A,-) \to F \to 0$,  with $f:A \to B$, then $(C,-) \otimes F$ has presentation $(C \otimes B,-) \xrightarrow{(C \otimes f,-)} (C \otimes A,-) \to (C,-) \otimes F \to 0$. 

\begin{notation} \label{notation Ff}
	Given an additive (skeletally) small category $\mathcal{A}$ every finitely presented module $F \in \mathcal{A}\hbox{-}\mathrm{mod}$ has a presentation of the form \[ (B,-) \xrightarrow{(f,-)} (A,-) \xrightarrow{\pi_f} F \to 0, \] with $f:A \to B$ in $\mathcal{A}$. We will denote such a functor by $F_f$. 
\end{notation}

By the above we have $(C,-) \otimes F_f=F_{C \otimes f}$. More generally, $F_f \otimes F_g=F_{(f \otimes U, A \otimes g)}$, where $f:A \to B$ and $g:U \to V$ and $(f \otimes U, A \otimes g):A \otimes U \to (B \otimes U) \oplus (A \otimes V)$ is the canonical map. 

Thus, Day convolution product restricts to a monoidal structure on the category of finitely presented additive functors $\mathcal{A}\hbox{-}\mathrm{mod}$, which we may also refer to as the `induced monoidal structure' or `induced tensor product'. This is exactly the tensor product given in (\cite{P18}, Section 13.3) with $\mathcal{A}=R\hbox{-}\mathrm{mod}$. Here we avoid the notation $(R\hbox{-}\mathrm{mod})\hbox{-}\mathrm{mod}$ in favour of $(R\hbox{-}\mathrm{mod},\mathbf{Ab})^{\mathrm{fp}}$.

\subsection{Rigid monoidal categories}

In this section we will outline the definition of a rigid monoidal category.

\begin{definition}
Let $\mathscr{C}$ be a symmetric monoidal category. $C^{\vee} \in \mathscr{C}$ is \textbf{dual} to $C \in \mathscr{C}$ if there exist morphisms $\eta:1 \to C^{\vee} \otimes C$ and $\epsilon:C \otimes C^{\vee} \to 1$ such that $(C^{\vee} \otimes \epsilon) \circ (\eta \otimes C^{\vee})= \mathrm{Id}_{C^{\vee}}$ and $(\epsilon \otimes C) \circ (C \otimes \eta)= \mathrm{Id}_C$.

A closed symmetric monoidal category $\mathscr{C}$ is said to be \textbf{rigid} if every object of $\mathscr{C}$ has a dual.
\end{definition} 

An important consequence of the existence of dual objects is the following.

\begin{prop} (e.g. \cite{EGNO15}, Proposition 1.10.9)
	Let $\mathscr{C}$ be a symmetric monoidal category and suppose $C \in \mathscr{C}$ is rigid. Then $C^{\vee} \otimes -$ is both left and right adjoint to $C \otimes -$.  
\end{prop}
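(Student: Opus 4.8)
The plan is to treat the two adjunctions separately, constructing explicit natural bijections of hom-sets out of the duality morphisms $\eta$ and $\epsilon$, together with the symmetry isomorphism $\sigma$ of $\mathscr{C}$.

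First I would show that $C \otimes -$ is left adjoint to $C^{\vee} \otimes -$, equivalently that $C^{\vee} \otimes -$ is a \emph{right} adjoint of $C \otimes -$. For objects $X,Y$ I would define the assignment sending $g \colon C \otimes X \to Y$ to
\[
X \cong 1 \otimes X \xrightarrow{\eta \otimes X} (C^{\vee} \otimes C) \otimes X \cong C^{\vee} \otimes (C \otimes X) \xrightarrow{C^{\vee} \otimes g} C^{\vee} \otimes Y,
\]
and sending $h \colon X \to C^{\vee} \otimes Y$ to
\[
C \otimes X \xrightarrow{C \otimes h} C \otimes (C^{\vee} \otimes Y) \cong (C \otimes C^{\vee}) \otimes Y \xrightarrow{\epsilon \otimes Y} 1 \otimes Y \cong Y.
\]
I would then check these assignments are natural in $X,Y$ and mutually inverse; being mutually inverse is precisely the content of the two zig-zag identities $(C^{\vee} \otimes \epsilon)\circ(\eta \otimes C^{\vee})=\mathrm{Id}_{C^{\vee}}$ and $(\epsilon \otimes C)\circ(C \otimes \eta)=\mathrm{Id}_{C}$ from the definition of a dual, read off after inserting the associativity and unit coherence isomorphisms. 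This half uses only that $C^{\vee}$ is a right dual of $C$ and needs no symmetry.

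Next I would show that $C^{\vee} \otimes -$ is also a \emph{left} adjoint of $C \otimes -$. The point is that in a symmetric monoidal category a right dual is automatically a left dual: from the given data I would form the reversed morphisms $\eta' := \sigma_{C^{\vee}, C} \circ \eta \colon 1 \to C \otimes C^{\vee}$ and $\epsilon' := \epsilon \circ \sigma_{C^{\vee}, C} \colon C^{\vee} \otimes C \to 1$, and verify that $(\eta',\epsilon')$ exhibit $C$ as a right dual of $C^{\vee}$. Repeating the construction of the previous paragraph with the roles of $C$ and $C^{\vee}$ interchanged, and with $(\eta',\epsilon')$ in place of $(\eta,\epsilon)$, then yields natural bijections $\mathscr{C}(C^{\vee} \otimes X, Y) \cong \mathscr{C}(X, C \otimes Y)$, that is, $C^{\vee} \otimes - \dashv C \otimes -$. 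Since both adjunctions are realised by the same functor $C^{\vee}\otimes-$, this gives the claim.

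The routine but fiddly part will be the coherence bookkeeping: verifying the reversed zig-zag identities for $(\eta',\epsilon')$ requires the hexagon axioms and naturality of $\sigma$, and throughout one must track the associativity and unit isomorphisms suppressed in the displays above. By Mac Lane's coherence theorem these manipulations are forced, so the only genuine input is the duality of the pair $(\eta,\epsilon)$ together with symmetry; I expect no conceptual difficulty beyond this diagram-chasing.
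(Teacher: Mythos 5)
Your proposal is correct, and it is essentially the standard argument: the paper offers no proof of this proposition at all (it is quoted from \cite{EGNO15}, Proposition 1.10.9), and the proof found there proceeds exactly as you do, building the hom-set bijections $\mathscr{C}(C \otimes X, Y) \cong \mathscr{C}(X, C^{\vee} \otimes Y)$ from $\eta$ and $\epsilon$, with mutual inverseness reducing to the two zig-zag identities, and then using the symmetry to exhibit $C$ as a dual of $C^{\vee}$ so that the same construction yields the other adjunction. The only point worth flagging is the one you already flag yourself: the verification that $(\eta', \epsilon')$ satisfy the zig-zag identities genuinely needs naturality of the symmetry and the hexagon/coherence axioms, but this is routine and your outline is complete.
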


\begin{cor}
	Let $\mathscr{C}$ be a closed symmetric monoidal category and suppose $C^{\vee}$ is dual to $C$ in $\mathscr{C}$. There exists a natural isomorphism $\mathrm{hom}(C,-) \cong C^{\vee} \otimes -$.
\end{cor}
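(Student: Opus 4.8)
The plan is to deduce the natural isomorphism formally from the uniqueness of adjoint functors, with no explicit computation needed. The two functors $\mathrm{hom}(C,-)$ and $C^{\vee} \otimes -$ are each characterised as a right adjoint to the single functor $C \otimes - : \mathscr{C} \to \mathscr{C}$, so I would reduce the claim to the standard fact that any two right adjoints of a fixed functor are canonically naturally isomorphic.

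Concretely, first I would recall that since $\mathscr{C}$ is closed symmetric monoidal, the internal hom-functor $\mathrm{hom}(C,-)$ is by definition the right adjoint of $C \otimes -$; this is exactly how $\mathrm{hom}(C,-)$ was introduced, so there is a natural isomorphism $\mathscr{C}(C \otimes X, Y) \cong \mathscr{C}(X, \mathrm{hom}(C,Y))$. Next, since $C^{\vee}$ is dual to $C$, I would invoke the preceding Proposition (the $\mathscr{C}$ in question being symmetric monoidal), which gives that $C^{\vee} \otimes -$ is \emph{both} a left and a right adjoint of $C \otimes -$. In particular $C^{\vee} \otimes -$ is a right adjoint of $C \otimes -$, supplying a second natural isomorphism $\mathscr{C}(C \otimes X, Y) \cong \mathscr{C}(X, C^{\vee} \otimes Y)$.

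Finally, I would conclude by uniqueness of adjoints: composing the two adjunction isomorphisms yields, for all $X, Y \in \mathscr{C}$, a natural isomorphism $\mathscr{C}(X, \mathrm{hom}(C,Y)) \cong \mathscr{C}(X, C^{\vee} \otimes Y)$, whence the Yoneda lemma (naturality in $X$) gives $\mathrm{hom}(C,Y) \cong C^{\vee} \otimes Y$, and naturality in $Y$ upgrades this to an isomorphism of functors $\mathrm{hom}(C,-) \cong C^{\vee} \otimes -$.

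There is essentially no genuine obstacle here: the entire content is the observation that the right-adjoint half of the preceding Proposition identifies $C^{\vee} \otimes -$ with the same universal object that defines $\mathrm{hom}(C,-)$. The only point requiring a moment's care is bookkeeping the direction of the adjunction—ensuring one uses the \emph{right}-adjoint assertion of the Proposition rather than the left-adjoint one—after which uniqueness of adjoints does all the work. If one preferred an explicit isomorphism, the unit $\eta$ and counit $\epsilon$ of the duality could be used to write down mutually inverse natural transformations between $\mathrm{hom}(C,-)$ and $C^{\vee} \otimes -$, but this is unnecessary for the stated result.
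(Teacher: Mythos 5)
Your proof is correct and is precisely the argument the paper intends: the corollary is stated without proof because it follows immediately from the preceding Proposition (that $C^{\vee}\otimes -$ is a right adjoint of $C\otimes -$) together with the defining adjunction of the internal hom and uniqueness of right adjoints. Your care in using the right-adjoint half of the Proposition, and the Yoneda argument making the uniqueness explicit, are exactly the intended reasoning.
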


\begin{cor}
	Let $\mathcal{A}$ be an abelian category with a closed symmetric monoidal structure and suppose $C \in \mathcal{A}$ has a dual. Then $C \otimes -:\mathcal{A} \to \mathcal{A}$ is exact.
\end{cor}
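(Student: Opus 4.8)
The plan is to exploit the general principle that a functor which is simultaneously a left adjoint and a right adjoint preserves both colimits and limits, and that for additive functors between abelian categories this forces exactness. All the real content is already packaged in the Proposition above, so the argument is short.

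First I would use closedness directly: since the monoidal structure on $\mathcal{A}$ is closed, the functor $C \otimes -$ has a right adjoint, namely the internal hom $\mathrm{hom}(C,-)$ (equivalently, by the preceding corollary, $C^{\vee} \otimes -$). A functor with a right adjoint preserves all colimits, in particular cokernels and finite direct sums, so $C \otimes -$ is right exact. This half holds for any object of a closed symmetric monoidal abelian category and does not yet use the dual.

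Next I would invoke the Proposition: because $C$ has a dual $C^{\vee}$, the functor $C^{\vee} \otimes -$ is \emph{both} left and right adjoint to $C \otimes -$. The point I need is the left-adjoint half, which exhibits $C \otimes -$ as a right adjoint (of $C^{\vee} \otimes -$). A functor with a left adjoint preserves all limits, in particular kernels, so $C \otimes -$ is left exact. Combining the two observations, $C \otimes -$ is both left and right exact, hence exact, which is the assertion.

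I do not anticipate any genuine obstacle here, since the statement is an immediate consequence of the Proposition together with the standard fact that right (resp.\ left) adjoints preserve limits (resp.\ colimits) and that preservation of kernels and cokernels is exactly left and right exactness. The only point requiring a little care is bookkeeping of the adjunction sides: one must check that it is specifically the existence of a \emph{left} adjoint to $C \otimes -$ that supplies left exactness, so that the dual contributes something new beyond the right exactness already forced by closedness.
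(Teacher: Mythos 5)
Your proof is correct and is exactly the argument the paper intends: the corollary is stated without proof precisely because it follows immediately from the preceding proposition, with $C^{\vee}\otimes-$ serving as a left adjoint to $C\otimes-$ (giving preservation of kernels, hence left exactness) while the right adjoint $\mathrm{hom}(C,-)\cong C^{\vee}\otimes-$ gives right exactness. Your care about which side of the adjunction supplies which half of exactness is exactly the right bookkeeping, so there is nothing to add.
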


\begin{definition}
	Let $\mathscr{C}$ be a rigid symmetric monoidal category. Given any morphism, $f:A \to B$ in $\mathscr{C}$, there exists a dual morphism, $f^{\vee}:B^{\vee} \to A^{\vee}$, in $\mathscr{C}$ given by the composition 
	\[ B^{\vee} \xrightarrow{\eta \otimes B^{\vee}} A^{\vee} \otimes A \otimes B^{\vee} \xrightarrow{A^{\vee} \otimes f \otimes A^{\vee}} A^{\vee} \otimes B \otimes B^{\vee} \xrightarrow{A^{\vee} \otimes \epsilon} A^{\vee}.\]
\end{definition}

\subsection{Purity in finitely accessible categories} \label{Section lfp background}

The results in this section will be stated without proof and we direct the reader to \cite{BRB}, \cite{P11} and \cite{PR10} for more details. Throughout the paper, we use \cite{BRB} and \cite{P11} as convenient secondary sources.

Let us recall the definition of a finitely accessible category.

\begin{definition}
	A category $\mathcal{C}$ is said to be \textbf{finitely accessible}, if it has direct limits and there exists a set, $\mathscr{G}$, of finitely presentable objects of $\mathcal{C}$ such that for every $X \in \mathcal{C}$, we can write $X$ as a direct limit of objects of $\mathscr{G}$. That is, $X=\underrightarrow{\mathrm{lim}}_{i \in I} X_i$ where $I$ is some directed indexing set and each $X_i \in \mathscr{G}$. Note that in this case, the full subcategory of finitely presentable objects of $\mathcal{C}$, denoted by $\mathcal{C}^{\mathrm{fp}}$, is skeletally small and we can take $\mathscr{G}$ to consist of a representative of each isomorphism class of $\mathcal{C}^{\mathrm{fp}}$. For the purposes of this paper we will take `finitely accessible' to mean additive and finitely accessible. 
\end{definition}

Next we give the definition of a definable subcategory of a finitely accessible category with products.

\begin{definition}
	Let $\mathcal{C}$ be a finitely accessible category with products. A full subcategory $\mathcal{D}\subseteq \mathcal{C}$ is said to be \textbf{definable} if it is closed in $\mathcal{C}$ under products, direct limits and pure subobjects.
	
	A \textbf{definable category} is a definable subcategory of some finitely accessible category with products.
\end{definition}

We can use the definable subcategories of a finitely accessible category with product to define a topology called the Ziegler spectrum.

\begin{definition}
Let $\mathcal{C}$ be finitely accessible with products. A monomorphism $m:X \to Y$ in $\mathcal{C}$ is said to be a \textbf{pure monomorphism} if for every $f:A \to B$ in $\mathcal{C}^{\mathrm{fp}}$ and for all morphisms $h:A \to X$ and $h':B \to Y$ such that $h' \circ f=m \circ h$ there exist some $k:B \to X$ such that $k \circ f=h$. 
	\begin{tikzpicture}
\matrix (m) [matrix of math nodes,row sep=4em,column sep=4em,minimum width=2em]
{
	A & B   \\
	X & Y \\};
\path[-stealth]
(m-1-1) edge node [above] {$f$} (m-1-2)
(m-1-1) edge node [left] {$h$} (m-2-1)
(m-1-2) edge node [left] {$h'$} (m-2-2)
(m-2-1) edge node [below] {$m$} (m-2-2)
(m-1-2) edge [dashed] node [below] {$k$} (m-2-1);
\end{tikzpicture}
\end{definition} 

\begin{remark}
	If $\mathcal{C}$ is locally finitely presented (that is finitely accessible, complete and cocomplete), pure monomorphisms can be characterised as those monomorphism $m:X \to Y$ which fit into an exact sequence \[0 \to X \xrightarrow{m} Y \xrightarrow{p} Z \to 0\] such that for every $A \in \mathcal{C}^{\mathrm{fp}}$, \[0 \to (A,X) \xrightarrow{(A,m)} (A,Y) \xrightarrow{(A,p)} (A,Z) \to 0\] is exact in $\mathbf{Ab}$ (see \cite{P11}, Theorem 5.2).
\end{remark}

\begin{definition}
	We say that an object $E \in \mathcal{C}$ is \textbf{pure-injective} if it is injective over pure monomorphisms, that is for every pure monomorphism $m:X \to Y$ in $\mathcal{C}$ and any morphism $k:X \to E$ there exists some $h:Y \to E$ such that $k=h \circ m$.
\end{definition} 

In fact, each finitely accessible category with products has, up to isomorphism, a set of indecomposable pure-injective objects and each definable subcategory is generated as such by its indecomposable pure-injectives. They form the underlying set of a topological space called the Ziegler spectrum.

\begin{definition}
	We define the \textbf{Ziegler spectrum} of $\mathcal{C}$, denoted $\mathrm{Zg}(\mathcal{C})$, to have underlying set given by the set of isomorphism classes of indecomposable pure-injectives in $\mathcal{C}$, denoted $\mathrm{pinj}_{\mathcal{C}}$, and closed subsets given by \[ \{[X] \in \mathrm{pinj}_{\mathcal{C}}: X \in \mathcal{D} \} \] where $[X]$ denotes the isomorphism classes of the indecomposable pure-injective $X$ and $\mathcal{D}$ runs through the definable subcategories of $\mathcal{C}$.
\end{definition}

\begin{prop} (\cite{P11}, Theorem 14.1)
	Let $\mathcal{C}$ be a finitely accessible category with products. The closed subsets described above define a topology on $\mathrm{pinj}_{\mathcal{C}}$. 
\end{prop}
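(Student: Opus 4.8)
The plan is to verify directly that the proposed collection of subsets satisfies the axioms for the closed sets of a topology on $\mathrm{pinj}_{\mathcal{C}}$: it contains $\emptyset$ and the whole space, it is closed under arbitrary intersections, and it is closed under finite unions. Throughout, for a definable subcategory $\mathcal{D} \subseteq \mathcal{C}$ I write $C(\mathcal{D}) = \{[X] \in \mathrm{pinj}_{\mathcal{C}} : X \in \mathcal{D}\}$ for the associated subset, so that the claim is precisely that $\{C(\mathcal{D}) : \mathcal{D} \text{ definable}\}$ is the system of closed sets of a topology.

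The first two axioms are formal. The whole category $\mathcal{C}$ is definable and $C(\mathcal{C}) = \mathrm{pinj}_{\mathcal{C}}$, while the zero subcategory is definable (it is trivially closed under products, direct limits and pure subobjects) and contains no indecomposable object, so it yields $\emptyset$. For arbitrary intersections, I would first observe that if $\{\mathcal{D}_i\}_{i \in I}$ is any family of definable subcategories then $\bigcap_{i} \mathcal{D}_i$ is again definable, since each of the three closure conditions is inherited by an intersection. Unwinding the definitions then gives $C(\bigcap_i \mathcal{D}_i) = \bigcap_i C(\mathcal{D}_i)$, because an indecomposable pure-injective lies in the intersection exactly when it lies in every member. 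Hence arbitrary intersections of closed sets are closed.

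The substance is in finite unions, and by induction it suffices to treat two definable subcategories $\mathcal{D}_1, \mathcal{D}_2$. Using that definable subcategories are closed under arbitrary intersection, I can form the smallest definable subcategory $\mathcal{E}$ containing $\mathcal{D}_1 \cup \mathcal{D}_2$ (the intersection of all definable subcategories containing the union); this $\mathcal{E}$ is definable, so $C(\mathcal{E})$ is a closed set, and the inclusion $C(\mathcal{D}_1) \cup C(\mathcal{D}_2) \subseteq C(\mathcal{E})$ is immediate. The real work is the reverse inclusion: every indecomposable pure-injective $N \in \mathcal{E}$ must already lie in $\mathcal{D}_1$ or in $\mathcal{D}_2$. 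To prove this I would invoke the structural description of the definable closure, namely that an indecomposable pure-injective belonging to the definable subcategory generated by a class $\mathcal{X}$ occurs as a direct summand of an ultraproduct of objects of $\mathcal{X}$. Applying this with $\mathcal{X} = \mathcal{D}_1 \cup \mathcal{D}_2$, write $N$ as a direct summand of $\prod_{i \in I} X_i / \mathcal{U}$ for an ultrafilter $\mathcal{U}$ on $I$ and objects $X_i \in \mathcal{D}_1 \cup \mathcal{D}_2$. Partition $I = I_1 \sqcup I_2$ according to whether $X_i$ has been chosen in $\mathcal{D}_1$ or $\mathcal{D}_2$; since $\mathcal{U}$ is an ultrafilter, exactly one of $I_1, I_2$ lies in $\mathcal{U}$, say $I_k$, and then the ultraproduct is isomorphic to the ultraproduct of the subfamily $\{X_i\}_{i \in I_k}$, all of whose terms lie in $\mathcal{D}_k$. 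As a reduced product is a direct limit of products, the definable subcategory $\mathcal{D}_k$ is closed under ultraproducts, so this ultraproduct lies in $\mathcal{D}_k$; and since $\mathcal{D}_k$ is closed under pure subobjects, hence under direct summands, we get $N \in \mathcal{D}_k$. This yields $C(\mathcal{E}) \subseteq C(\mathcal{D}_1) \cup C(\mathcal{D}_2)$, so $C(\mathcal{D}_1) \cup C(\mathcal{D}_2) = C(\mathcal{E})$ is closed.

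The main obstacle is the structural input used in the finite-union step — that an indecomposable pure-injective in a generated definable subcategory is a summand of an ultraproduct of the generators. This is the genuinely non-formal ingredient (it rests on the compactness/\L o\'s-type machinery underlying the theory of pure-injectives), whereas everything else reduces to the lattice properties of definable subcategories together with the clean ultrafilter dichotomy that forces an ultraproduct of objects drawn from $\mathcal{D}_1 \cup \mathcal{D}_2$ into a single $\mathcal{D}_k$. An alternative, and essentially equivalent, route would bypass the explicit ultraproduct description by working in the functor category: identify definable subcategories with Serre subcategories of $(\mathcal{C}^{\mathrm{fp}}, \mathbf{Ab})^{\mathrm{fp}}$, take as a basis of open sets the supports $(F) = \{[N] : FN \neq 0\}$ of finitely presented functors $F$, and reduce the finite-intersection-of-opens condition to a product operation on coherent functors that detects non-vanishing precisely on objects with local endomorphism ring; I would fall back on this formulation if the ultraproduct argument proved awkward to set up in the required generality.
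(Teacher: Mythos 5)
The paper itself offers no proof of this proposition: it appears in the preliminaries, which the author explicitly states ``without proof,'' citing \cite{P11}, Theorem 14.1. So the comparison is really with the standard proofs in the literature (Ziegler, Prest), and against those your outline is essentially the right one: the empty set, whole space, and arbitrary intersections are formal exactly as you say, and the entire content sits in the finite-union axiom, where some genuinely non-formal property of \emph{indecomposable} pure-injectives must enter (without indecomposability the statement is false, which is why no purely lattice-theoretic argument can work). Your reduction of the union step to the structural fact, the ultrafilter dichotomy, closure of definable subcategories under ultraproducts (as direct limits of products), and closure under direct summands are all correct.

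The one point needing repair is the precise form of your structural input. As literally stated --- an indecomposable pure-injective $N$ in the definable subcategory generated by $\mathcal{X}$ is a direct summand of an ultraproduct of members of $\mathcal{X}$ --- it is not quite right in general: realizing the pp-type of a generator of $N$ in an ultraproduct $V$ only exhibits $N$ as a direct summand of the \emph{pure-injective hull} of $V$ (the ultraproduct itself need not be pure-injective once $\mathcal{C}^{\mathrm{fp}}$, i.e.\ the ``language,'' is uncountable). This is harmless for your argument, since definable subcategories are closed under pure-injective hulls, so $\mathrm{PE}(V) \in \mathcal{D}_k$ still forces $N \in \mathcal{D}_k$; but the fix should be said. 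A second caution: the version of this fact that is easiest to cite (e.g.\ the description of $\langle \mathcal{X}\rangle$ as pure subobjects of \emph{reduced} products of members of $\mathcal{X}$) would \emph{not} suffice for your dichotomy, because a reduced product over a non-ultra filter does not collapse onto one side of the partition $I = I_1 \sqcup I_2$; instead it decomposes as $A_1 \oplus A_2$ with $A_k \in \mathcal{D}_k$, and one must then invoke that indecomposable pure-injectives have local endomorphism rings (exchange property) to place $N$ in one summand. So either way the indecomposability input is unavoidable: in your route it is hidden inside the strong (ultraproduct/irreducible pp-type) form of the structural theorem, which is proved via Ziegler's irreducibility criterion and Neumann's lemma. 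Your fallback route --- Serre subcategories of $\mathcal{C}^{\mathrm{fp}}\hbox{-}\mathrm{mod}$, where the union axiom follows from uniformity of the indecomposable injectives corresponding to the points --- is in fact how the cited sources \cite{P11}, \cite{BRB} argue, and in the present categorical generality it is the more robust of the two.
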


Next we give another way to find the definable subcategories of $\mathcal{C}$.

\begin{notation} Let $\mathcal{C}$ be a finitely accessible category. For $F \in \mathcal{C}^{\mathrm{fp}}\hbox{-}\mathrm{mod}$, denote by $\overrightarrow{F}:\mathcal{C} \to \mathbf{Ab}$ the unique extension of $F$ which commutes with direct limits (e.g. see \cite{BRB}, Proposition 10.2.41). 
\end{notation}

If $\mathcal{C}$ is additive finitely accessible with products, then a subcategory, $\mathcal{D} \subseteq \mathcal{C}$ is definable if and only if there is a collection of finitely presented functors $\mathcal{Y} \subseteq \mathcal{C}^{\mathrm{fp}}\hbox{-}\mathrm{mod}$ such that $X \in \mathcal{D}$ if and only if $\overrightarrow{F}(X)=0$ for all $F \in \mathcal{Y}$. Furthermore, if $\mathcal{D} \subseteq \mathcal{C}$ is definable then the set $S=\{F \in \mathcal{C}^{\mathrm{fp}}\hbox{-}\mathrm{mod}:\overrightarrow{F}(X)=0,~\forall X \in \mathcal{D}\}$ is a Serre subcategory. In fact, we have Theorem \ref{loc fp def Serre} below.

\begin{theorem} \label{loc fp def Serre} (\cite{P11}, Theorem 14.2)
	Let $\mathcal{C}$ be an additive finitely accessible category with products. There is a natural bijection between:
	\begin{enumerate}[label=(\roman*)]
		\item  the definable subcategories of $\mathcal{C}$,
		
		\item the Serre subcategories of $\mathcal{C}^{\mathrm{fp}}\hbox{-}\mathrm{mod}$,
		
		\item the closed subsets of the Ziegler Spectrum $\mathrm{Zg}(\mathcal{C})$.
	\end{enumerate}
	  
\end{theorem}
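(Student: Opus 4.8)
The plan is to construct explicit maps between (i) and (ii) by annihilation, to treat (iii) separately, and then to verify that everything is mutually inverse. To a definable subcategory $\mathcal{D}$ I would associate
\[ S(\mathcal{D}) = \{ F \in \mathcal{C}^{\mathrm{fp}}\hbox{-}\mathrm{mod} : \overrightarrow{F}(X) = 0 \text{ for all } X \in \mathcal{D} \}, \]
and to a Serre subcategory $S$ the full subcategory
\[ \mathcal{D}(S) = \{ X \in \mathcal{C} : \overrightarrow{F}(X) = 0 \text{ for all } F \in S \}. \]
The first things to record are that the extension functor $F \mapsto \overrightarrow{F}$ is exact, commutes with direct limits, and (for finitely presented $F$) commutes with direct products, and that it carries a pure monomorphism to a monomorphism. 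Granting these standard background facts, $S(\mathcal{D})$ is closed under subobjects, quotients and extensions: each follows by evaluating the relevant short exact sequence in $\mathcal{C}^{\mathrm{fp}}\hbox{-}\mathrm{mod}$ at a fixed $X \in \mathcal{D}$ and using exactness of $\overrightarrow{(-)}$, so $S(\mathcal{D})$ is a Serre subcategory. Dually, $\mathcal{D}(S)$ is a common kernel of functors commuting with products and direct limits and sending pure monos to monos, hence closed under products, direct limits and pure subobjects, so it is definable.

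Next I would check that $S(-)$ and $\mathcal{D}(-)$ are mutually inverse. The inclusions $\mathcal{D} \subseteq \mathcal{D}(S(\mathcal{D}))$ and $S \subseteq S(\mathcal{D}(S))$ are immediate from the definitions, so the content lies in the two reverse inclusions. Both reduce to the identification $\mathrm{fun}(\mathcal{D}(S)) \simeq \mathcal{C}^{\mathrm{fp}}\hbox{-}\mathrm{mod}/S$: under localization at $S$, a functor $F$ becomes zero in the quotient precisely when $\overrightarrow{F}$ vanishes on every object of $\mathcal{D}(S)$. Thus to prove $S(\mathcal{D}(S)) \subseteq S$ I must show that if $F \notin S$ then there is some $X \in \mathcal{D}(S)$ with $\overrightarrow{F}(X) \neq 0$, and $\mathcal{D}(S(\mathcal{D})) \subseteq \mathcal{D}$ follows once one knows that $\mathcal{D}$ is cut out by exactly the functors it annihilates.

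For (iii), the map sending $\mathcal{D}$ to $\{[X] \in \mathrm{pinj}_{\mathcal{C}} : X \in \mathcal{D}\}$ lands in the closed subsets of $\mathrm{Zg}(\mathcal{C})$ by the very definition of the topology, and conversely every closed set is of this form for some definable $\mathcal{D}$. To see this correspondence is a bijection I would invoke the structural fact that a definable subcategory is generated by, and therefore determined by, the indecomposable pure-injectives it contains: if two definable subcategories meet $\mathrm{pinj}_{\mathcal{C}}$ in the same set, then they coincide. This rests on the existence of enough indecomposable pure-injectives and of pure-injective hulls in a finitely accessible category with products.

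The main obstacle is the detection step underlying both the reverse inclusion $S(\mathcal{D}(S)) \subseteq S$ and the injectivity in (iii): that a finitely presented functor not lying in $S$ is witnessed as nonzero on some indecomposable pure-injective object of the vanishing locus $\mathcal{D}(S)$. This is exactly where the localization theory of the functor category and the role of pure-injectives as test objects are required, and it is the deep input that makes the threefold correspondence work; by comparison, the remaining verifications are formal closure properties.
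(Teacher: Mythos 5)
First, a point of comparison: the paper does not prove this statement at all --- it appears in the preliminaries, which the paper explicitly states are given without proof, with a citation to \cite{P11} (Theorem 14.2). So your attempt must be measured against the standard proof in that source, and against that standard there is a genuine gap. Your scaffolding (the annihilation maps $S(-)$ and $\mathcal{D}(-)$, the verification that $S(\mathcal{D})$ is Serre and $\mathcal{D}(S)$ is definable, the two trivial inclusions) is correct, but the step you call the ``main obstacle'' is never carried out, and your proposed reduction is circular. A functor $F$ becomes zero in the Serre quotient $\mathcal{C}^{\mathrm{fp}}\hbox{-}\mathrm{mod}/S$ if and only if $F \in S$; hence your sentence ``under localization at $S$, a functor $F$ becomes zero in the quotient precisely when $\overrightarrow{F}$ vanishes on every object of $\mathcal{D}(S)$'' is not a tool but is literally the equality $S = S(\mathcal{D}(S))$ you are trying to establish. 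Likewise, invoking $\mathrm{fun}(\mathcal{D}(S)) \simeq \mathcal{C}^{\mathrm{fp}}\hbox{-}\mathrm{mod}/S$ (Theorem \ref{Thm fun(D)}) for an \emph{arbitrary} Serre subcategory $S$ presupposes that the Serre subcategory attached to $\mathcal{D}(S)$ by annihilation is $S$ itself, which is again the statement at issue. So everything non-formal --- the detection step ($F \notin S$ implies $\overrightarrow{F}(X) \neq 0$ for some $X \in \mathcal{D}(S)$, indeed some indecomposable pure-injective $X$) and the fact used in (iii) that a definable subcategory is determined by the indecomposable pure-injectives it contains --- is asserted or deferred, never proved. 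Without these, none of the three bijections is established.

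To close the gap one needs the actual machinery that the cited proof runs on: pass from $S$ to the hereditary torsion theory of finite type it generates on the locally coherent category $\mathcal{C}^{\mathrm{fp}}\hbox{-}\mathrm{Mod}$; use the Gruson--Jensen-style identification of pure-injective objects of $\mathcal{C}$ with injective objects of the functor category, under which the objects of $\mathcal{D}(S)$ correspond exactly to the $S$-torsion-free injectives; observe that $F \notin S$ means $F$ is not torsion, so $F$ admits a nonzero morphism to some torsion-free injective; and then invoke the existence of enough indecomposable injectives in the localized locally coherent category to replace that injective by an indecomposable one, yielding an indecomposable pure-injective in $\mathcal{D}(S)$ on which $\overrightarrow{F}$ is nonzero. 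The same circle of ideas, together with pure-injective hulls, is what justifies your claim that two definable subcategories meeting $\mathrm{pinj}_{\mathcal{C}}$ in the same set coincide. With these inputs your outline does assemble into the standard proof; as written, however, it identifies where the theorem's content lies but does not supply it.
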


Next let us describe elementary duality. Let $\mathcal{A}$ be a skeletally small preadditive category. First we define the tensor product of $\mathcal{A}$-modules, a generalisation of tensor product over a ring. 

\begin{definition} \label{def tensor over A} (see for example \cite{P11}, Section 3)
	The \textbf{tensor product of} $\mathcal{A}$\textbf{-modules} is given by a functor $- \otimes_{\mathcal{A}}-:\mathrm{Mod}\hbox{-}\mathcal{A} \times \mathcal{A}\hbox{-}\mathrm{Mod} \to \mathbf{Ab}$ determined on objects (up to isomorphism) by the following two assertions. For every $M \in \mathrm{Mod}\hbox{-}\mathcal{A}$,
	
	\begin{enumerate}[label=(\roman*)]
		\item  $M \otimes_{\mathcal{A}} (A,-)\cong M(A)$ for every $A \in \mathcal{A}$,
		
		\item  $M \otimes_{\mathcal{A}} -$ is right exact. 
	\end{enumerate}

The functor is defined on morphisms in the obvious way.
\end{definition}

We can now define a duality of functor categories as follows. 

\begin{theorem} \label{thm background duality} (\cite{P11}, Theorem 4.5)
	There is a duality $\delta:(\mathrm{mod}\hbox{-}\mathcal{A},\mathbf{Ab})^{\mathrm{fp}}\to (\mathcal{A}\hbox{-}\mathrm{mod},\mathbf{Ab})^{\mathrm{fp}} $ given on objects by mapping $F_f:\mathrm{mod}\hbox{-}\mathcal{A} \to \mathbf{Ab}$, where $f:A \to B$ in $\mathrm{mod}\hbox{-}\mathcal{A}$, to $\delta F: \mathcal{A} \hbox{-}\mathrm{mod} \to \mathbf{Ab}$ where $\delta F$ has copresentation \[ 0 \to \delta F \to A \otimes_{\mathcal{A}} - \xrightarrow{f \otimes_{\mathcal{A}} -} B \otimes_{\mathcal{A}} -.\]   
\end{theorem}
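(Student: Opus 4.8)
The plan is to treat $\delta$ as the contravariant functor determined by its effect on projective presentations, with the representable functors on $\mathrm{mod}\hbox{-}\mathcal{A}$ playing the role of projective generators and the tensor functors on $\mathcal{A}\hbox{-}\mathrm{mod}$ the role of injective cogenerators. First I would record the structural facts about the abelian category $(\mathrm{mod}\hbox{-}\mathcal{A},\mathbf{Ab})^{\mathrm{fp}}$: every object $F$ admits a presentation $(B,-)\xrightarrow{(f,-)}(A,-)\to F\to 0$ by representables (Notation \ref{notation Ff}), the representables $(A,-)$ are projective (Yoneda plus exactness of evaluation), and morphisms between them are computed contravariantly by Yoneda, $\mathrm{Hom}((A,-),(A',-))\cong\mathrm{mod}\hbox{-}\mathcal{A}(A',A)$. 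The shape of the construction is then forced: applying a contravariant exact functor to the presentation turns a cokernel of representables into a kernel of their images, which is exactly the copresentation $0\to\delta F\to A\otimes_{\mathcal{A}}-\to B\otimes_{\mathcal{A}}-$ in the statement.

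The key lemma I would isolate is a tensor analogue of Yoneda: the assignment $M\mapsto (M\otimes_{\mathcal{A}}-)$ is a full and faithful (covariant) embedding of $\mathrm{mod}\hbox{-}\mathcal{A}$ into $(\mathcal{A}\hbox{-}\mathrm{mod},\mathbf{Ab})^{\mathrm{fp}}$, so that $\mathrm{Hom}(M\otimes_{\mathcal{A}}-,M'\otimes_{\mathcal{A}}-)\cong\mathrm{mod}\hbox{-}\mathcal{A}(M,M')$. Faithfulness is immediate after evaluating a natural transformation at the representables $(A,-)$, where $M\otimes_{\mathcal{A}}(A,-)\cong M(A)$ by Definition \ref{def tensor over A}(i); fullness follows because tensor functors are right exact (Definition \ref{def tensor over A}(ii)) and every object of $\mathcal{A}\hbox{-}\mathrm{mod}$ has a presentation by representables, so a natural transformation is determined by, and reconstructible from, its restriction to representables. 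Combining this with the contravariant Yoneda identification above shows that $\delta$ sends $(A,-)$ to $A\otimes_{\mathcal{A}}-$ and is a contravariant equivalence between the full subcategory of representables and the full subcategory of tensor functors.

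Next I would define $\delta$ on morphisms by lifting along presentations and check well-definedness up to natural isomorphism. Given $F,F'$ with chosen presentations and a morphism $\phi:F\to F'$, projectivity of representables lifts $\phi$ to a map of presentations; applying the tensor embedding produces a map of the corresponding copresentations and hence a map $\delta F'\to\delta F$ of the kernels. Independence of the lift and of the chosen presentation is the standard comparison argument, whose essential input is that the tensor functors are \emph{injective} in $(\mathcal{A}\hbox{-}\mathrm{mod},\mathbf{Ab})^{\mathrm{fp}}$ (dually to projectivity of representables), guaranteeing that maps between copresentations exist and are unique up to homotopy. The same injectivity, together with the fact that the finitely presented functors are precisely those admitting a copresentation by tensor functors, is what ensures $\delta F$ genuinely lands in $(\mathcal{A}\hbox{-}\mathrm{mod},\mathbf{Ab})^{\mathrm{fp}}$.

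Finally, to see that $\delta$ is a duality I would construct the evidently symmetric functor $\delta'$ in the opposite direction, swapping the roles of left and right $\mathcal{A}$-modules and of representables and tensor functors, and verify $\delta'\delta\cong\mathrm{Id}$ and $\delta\delta'\cong\mathrm{Id}$. On representables and tensor functors these isomorphisms come directly from the key lemma, and they extend to arbitrary objects because $\delta,\delta'$ are additive and carry (projective) presentations to (injective) copresentations, so a five-lemma argument on the natural comparison maps finishes it. I expect the main obstacle to be establishing the injectivity of the tensor functors $M\otimes_{\mathcal{A}}-$ and, relatedly, that kernels of maps between them remain finitely presented; everything else is Yoneda bookkeeping and the presentation-lifting formalism, but injectivity must be proved directly (for instance by computing $\mathrm{Hom}(G,M\otimes_{\mathcal{A}}-)$ from a presentation of $G$ and checking exactness) in order to avoid circularity with the duality itself.
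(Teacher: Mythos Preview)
The paper does not prove this theorem: it appears in the preliminaries (Section~\ref{Section lfp background}), which opens with the remark that ``the results in this section will be stated without proof,'' and the statement is simply cited from \cite{P11}, Theorem~4.5. So there is no in-paper proof to compare against.

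That said, your outline is the standard route to this result (the Auslander--Gruson--Jensen duality) and is essentially correct. The ingredients you flag---Yoneda for the representable side, the identification $M\otimes_{\mathcal{A}}(A,-)\cong M(A)$ for the tensor side, projectivity of representables in $(\mathrm{mod}\hbox{-}\mathcal{A},\mathbf{Ab})^{\mathrm{fp}}$, and injectivity of the functors $M\otimes_{\mathcal{A}}-$ in $(\mathcal{A}\hbox{-}\mathrm{mod},\mathbf{Ab})^{\mathrm{fp}}$---are exactly what drives the proof in \cite{P11} and \cite{BRB}. You are also right to isolate injectivity of $M\otimes_{\mathcal{A}}-$ (equivalently, fp-injectivity) as the point needing a direct argument; in the references this is done by computing $\mathrm{Hom}(G,M\otimes_{\mathcal{A}}-)$ from a presentation of $G$ and using right exactness of $M\otimes_{\mathcal{A}}-$, just as you suggest. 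One small sharpening: rather than building $\delta'$ separately and matching it with $\delta$, the literature typically shows that every object of $(\mathcal{A}\hbox{-}\mathrm{mod},\mathbf{Ab})^{\mathrm{fp}}$ has an injective copresentation by tensor functors, which makes essential surjectivity of $\delta$ immediate and avoids a second round of well-definedness checks.
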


Next we note that $\delta$ induces a bijection between definable subcategories. 

\begin{prop} \label{prop background def dual} (\cite{P11}, Theorem 8.1)
	The duality, $\delta$, of Theorem \ref{thm background duality} maps Serre subcategories of $(\mathrm{mod}\hbox{-}\mathcal{A},\mathbf{Ab})^{\mathrm{fp}}$ to Serre subcategories of $(\mathcal{A}\hbox{-}\mathrm{mod},\mathbf{Ab})^{\mathrm{fp}}$ and therefore induces a bijection between the definable subcategories of $\mathrm{Mod}\hbox{-}\mathcal{A}$ and those of $\mathcal{A}\hbox{-}\mathrm{Mod}$.
\end{prop}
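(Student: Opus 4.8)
The statement splits into two parts linked by ``therefore'', and the second follows formally from the first. Indeed, $\mathrm{Mod}\hbox{-}\mathcal{A}=(\mathcal{A}^{\mathrm{op}},\mathbf{Ab})$ and $\mathcal{A}\hbox{-}\mathrm{Mod}=(\mathcal{A},\mathbf{Ab})$ are both finitely accessible with products, and their finitely presented objects are $\mathrm{mod}\hbox{-}\mathcal{A}$ and $\mathcal{A}\hbox{-}\mathrm{mod}$ respectively. Thus Theorem \ref{loc fp def Serre}, applied to $\mathcal{C}=\mathrm{Mod}\hbox{-}\mathcal{A}$ and to $\mathcal{C}=\mathcal{A}\hbox{-}\mathrm{Mod}$, already supplies bijections between the definable subcategories of $\mathrm{Mod}\hbox{-}\mathcal{A}$ (resp. $\mathcal{A}\hbox{-}\mathrm{Mod}$) and the Serre subcategories of $(\mathrm{mod}\hbox{-}\mathcal{A},\mathbf{Ab})^{\mathrm{fp}}$ (resp. $(\mathcal{A}\hbox{-}\mathrm{mod},\mathbf{Ab})^{\mathrm{fp}}$). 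So the plan is to prove the first assertion and then compose these two bijections with the one it provides.

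The heart of the matter is the purely categorical claim that a duality (anti-equivalence) between abelian categories carries Serre subcategories to Serre subcategories, bijectively. First I would record that both $(\mathrm{mod}\hbox{-}\mathcal{A},\mathbf{Ab})^{\mathrm{fp}}$ and $(\mathcal{A}\hbox{-}\mathrm{mod},\mathbf{Ab})^{\mathrm{fp}}$ are abelian, being functor categories of the form $\mathcal{C}^{\mathrm{fp}}\hbox{-}\mathrm{mod}$ (this is implicit in Theorem \ref{loc fp def Serre}). By Theorem \ref{thm background duality}, $\delta$ is an anti-equivalence of these abelian categories; let $\delta^{\sharp}$ denote a quasi-inverse, which is again such an anti-equivalence. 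Being an (anti-)equivalence of abelian categories, $\delta$ is automatically exact as a contravariant functor: it sends each short exact sequence $0\to X'\to X\to X''\to 0$ to a short exact sequence $0\to \delta X''\to \delta X\to \delta X'\to 0$, and similarly for $\delta^{\sharp}$.

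Given a Serre subcategory $\mathsf{S}$ of $(\mathrm{mod}\hbox{-}\mathcal{A},\mathbf{Ab})^{\mathrm{fp}}$, I would show that the isomorphism-closure of $\delta(\mathsf{S})$ is Serre, using that the three closure conditions are interchanged by the duality. A subobject $Y'\hookrightarrow \delta X$ is carried by $\delta^{\sharp}$ to a quotient $X\cong\delta^{\sharp}\delta X\twoheadrightarrow \delta^{\sharp}Y'$, so $\delta^{\sharp}Y'\in\mathsf{S}$ (closure under quotients) and hence $Y'\cong\delta\delta^{\sharp}Y'\in\delta(\mathsf{S})$; dually a quotient of an object of $\delta(\mathsf{S})$ corresponds to a subobject of an object of $\mathsf{S}$; and closure under extensions is self-dual under reversal of the exact sequence. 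Thus $\delta(\mathsf{S})$ is Serre, and running the same argument for $\delta^{\sharp}$ shows that $\mathsf{S}\mapsto\delta(\mathsf{S})$ is a bijection between the Serre subcategories of the two functor categories, with inverse $\mathsf{T}\mapsto\delta^{\sharp}(\mathsf{T})$. Composing this with the two instances of Theorem \ref{loc fp def Serre} yields the desired bijection between definable subcategories.

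I expect the only genuinely delicate point to be the bookkeeping of arrow directions in the Serre argument; everything else is an assembly of the cited results. The one input that must be taken from Theorem \ref{thm background duality} rather than reproved is that $\delta$ really is an anti-equivalence of abelian categories, since its description is only given on objects via the copresentation $0\to\delta F\to A\otimes_{\mathcal{A}}-\to B\otimes_{\mathcal{A}}-$; granting that, the exactness-reversal needed above is automatic.
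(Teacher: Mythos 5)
The paper does not contain a proof of this proposition: it sits in Section 2.3, which opens by declaring that ``the results in this section will be stated without proof,'' and the statement is simply quoted from \cite{P11}, Theorem 8.1. So there is no in-paper argument to compare yours against; I can only assess your proposal on its own terms, and it is correct. Granting Theorem \ref{thm background duality} (which you are entitled to use, since the proposition itself is phrased as a consequence of it), your two steps are sound: an anti-equivalence of abelian categories is exact as a contravariant functor, so it interchanges subobjects and quotients and preserves extensions, whence it carries Serre subcategories bijectively to Serre subcategories (the isomorphism-closure point you raise is harmless, since closure under subobjects already forces closure under isomorphism); and composing with the two instances of Theorem \ref{loc fp def Serre}, applied to $\mathcal{C}=\mathrm{Mod}\hbox{-}\mathcal{A}$ and $\mathcal{C}=\mathcal{A}\hbox{-}\mathrm{Mod}$, yields the bijection on definable subcategories, which is exactly the correspondence the paper's subsequent notation ($\delta\mathsf{S}$, $\delta\mathcal{D}$) presupposes. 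This is also essentially the argument in the cited source, so your reconstruction is the standard one rather than a genuinely different route.
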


\begin{notation}
	Given a Serre subcategory $\mathsf{S} \subseteq (\mathrm{mod}\hbox{-}\mathcal{A},\mathbf{Ab})^{\mathrm{fp}}$ we will denote the dual Serre subcategory by $\delta \mathsf{S}$, that is $\delta \mathsf{S}=\{\delta F: F \in \mathsf{S}\} \subseteq (\mathcal{A}\hbox{-}\mathrm{mod},\mathbf{Ab})^{\mathrm{fp}}$.
	
	Similarly, given a definable subcategory $\mathcal{D} \subseteq \mathrm{Mod}\hbox{-}\mathcal{A}$ we will denote the dual definable subcategory, associated to $\delta \mathsf{S}$ by annihilation, by $\delta \mathcal{D} \subseteq \mathcal{A}\hbox{-}\mathrm{Mod}$. 
	
	We will also use this $\delta$ notation for the inverse map. That is, if $\mathsf{S} \subseteq (\mathcal{A}\hbox{-}\mathrm{mod},\mathbf{Ab})^{\mathrm{fp}}$ is a Serre subcategory $\delta \mathsf{S} \subseteq (\mathrm{mod}\hbox{-}\mathcal{A},\mathbf{Ab})^{\mathrm{fp}}$ is the dual Serre subcategory and similarly for definable subcategories.
\end{notation}

Below we give two key properties of the 2-category anti-equivalence between $\mathbb{ABEX}$ and $\mathbb{DEF}$. See \cite{PR10} for full details.

\begin{definition}
	Let  $\mathbb{DEF}$ denote the 2-category with objects given by definable categories, morphisms given by additive functors which preserve direct product and direct limits and 2-morphisms given by natural transformations. 
	
	Let $\mathbb{ABEX}$ denote the 2-category with objects given by skeletally small abelian categories, morphisms given by additive exact functors and 2-morphisms given by natural transformations. 
\end{definition}

\begin{theorem} \label{Thm ABEX DEF} (\cite{PR10}, Theorem 2.3) 
	There exists a 2-category anti-equivalence between $\mathbb{ABEX}$ and  $\mathbb{DEF}$ given on objects by $\mathscr{A} \mapsto \mathrm{Ex}(\mathscr{A},\mathbf{Ab})$ and $\mathcal{D} \mapsto \mathrm{fun}(\mathcal{D}):=(\mathcal{D},\mathbf{Ab})^{\to \Pi}$, where $\mathrm{Ex}(\mathscr{A},\mathbf{Ab})$ is the category of exact functors from $\mathscr{A}$ to the category of abelian groups and $(\mathcal{D},\mathbf{Ab})^{\to \Pi}$ is the category of additive functors from $\mathcal{D}$ to the category of abelian groups which commute with direct products and direct limits. 
\end{theorem}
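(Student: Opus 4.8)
The plan is to show that the two assignments $\Phi\colon\mathscr{A}\mapsto\mathrm{Ex}(\mathscr{A},\mathbf{Ab})$ and $\Psi\colon\mathcal{D}\mapsto\mathrm{fun}(\mathcal{D})$ (each contravariant on $1$-cells) are well-defined $2$-functors and that evaluation furnishes $2$-natural equivalences $\Psi\Phi\simeq\mathrm{Id}_{\mathbb{ABEX}}$ and $\Phi\Psi\simeq\mathrm{Id}_{\mathbb{DEF}}$. First I would check the assignments land in the right $2$-categories. Since products, direct limits, and exactness of sequences of functors into $\mathbf{Ab}$ are all computed pointwise, $\mathrm{Ex}(\mathscr{A},\mathbf{Ab})$ is closed under products and direct limits in the module category $(\mathscr{A},\mathbf{Ab})$, and by standard purity theory (\cite{P11}, \cite{BRB}) also under pure subobjects, so it is definable. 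Dually, for a definable $\mathcal{D}\subseteq\mathcal{C}$ I would invoke the identification $\mathrm{fun}(\mathcal{D})\simeq\mathcal{C}^{\mathrm{fp}}\hbox{-}\mathrm{mod}/\mathsf{S}$, where $\mathsf{S}$ is the Serre subcategory matched to $\mathcal{D}$ under Theorem \ref{loc fp def Serre}; as a Serre quotient of the skeletally small abelian category $\mathcal{C}^{\mathrm{fp}}\hbox{-}\mathrm{mod}$ it is again skeletally small abelian, with pointwise exact structure, so each evaluation $\mathrm{ev}_d$ is automatically exact.

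Next I would verify functoriality on $1$- and $2$-cells. An exact $F\colon\mathscr{A}\to\mathscr{B}$ induces $\Phi F=(-\circ F)\colon\mathrm{Ex}(\mathscr{B},\mathbf{Ab})\to\mathrm{Ex}(\mathscr{A},\mathbf{Ab})$, which reverses the direction of $1$-cells (hence the anti-equivalence) and preserves products and direct limits because these are pointwise; symmetrically a product-and-direct-limit-preserving $G\colon\mathcal{D}\to\mathcal{E}$ induces the exact functor $\Psi G=(-\circ G)\colon\mathrm{fun}(\mathcal{E})\to\mathrm{fun}(\mathcal{D})$, exactness surviving because it is tested pointwise and $H\circ G$ still preserves products and limits. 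On $2$-cells both act by whiskering, after which functoriality and the interchange law are routine.

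The heart is the pair of biduality equivalences. For each definable $\mathcal{D}$ the counit candidate is $\mathcal{D}\to\mathrm{Ex}(\mathrm{fun}(\mathcal{D}),\mathbf{Ab})$, $d\mapsto\mathrm{ev}_d$, and for each abelian $\mathscr{A}$ the unit candidate is $\mathscr{A}\to\mathrm{fun}(\mathrm{Ex}(\mathscr{A},\mathbf{Ab}))$, $a\mapsto\mathrm{ev}_a$. To prove the counit is an equivalence I would combine $\mathrm{fun}(\mathcal{D})\simeq\mathcal{C}^{\mathrm{fp}}\hbox{-}\mathrm{mod}/\mathsf{S}$ with the fact that exact functors out of a Serre quotient $\mathcal{B}/\mathsf{S}$ are exactly the exact functors out of $\mathcal{B}$ that annihilate $\mathsf{S}$, reducing to the base reconstruction $\mathcal{C}\simeq\mathrm{Ex}(\mathcal{C}^{\mathrm{fp}}\hbox{-}\mathrm{mod},\mathbf{Ab})$ sending $X$ to $(F\mapsto\overrightarrow{F}(X))$. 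Under this identification the $\mathsf{S}$-annihilating exact functors are precisely the $X$ with $\overrightarrow{F}(X)=0$ for all $F\in\mathsf{S}$, i.e. the objects of $\mathcal{D}$; so the counit is an equivalence. For the unit I would apply the base reconstruction with $\mathcal{C}=\mathrm{Ex}(\mathscr{A},\mathbf{Ab})$, giving $\mathrm{fun}(\mathrm{Ex}(\mathscr{A},\mathbf{Ab}))\simeq\mathcal{C}^{\mathrm{fp}}\hbox{-}\mathrm{mod}$, and then show the resulting comparison $\mathscr{A}\to\mathcal{C}^{\mathrm{fp}}\hbox{-}\mathrm{mod}$ is an equivalence of abelian categories — the reconstruction of an abelian category from the definable category of its exact functors.

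I expect the difficulty to be concentrated in two places. First, the base reconstruction $\mathcal{C}\simeq\mathrm{Ex}(\mathcal{C}^{\mathrm{fp}}\hbox{-}\mathrm{mod},\mathbf{Ab})$: exactness of each $\overrightarrow{(-)}$ is manageable, but full faithfulness and essential surjectivity require the full apparatus of purity — writing objects as direct limits of finitely presented ones, matching exact (equivalently flat) functors on $\mathcal{C}^{\mathrm{fp}}\hbox{-}\mathrm{mod}$ with such direct limits, and recovering morphisms by Yoneda. Second, upgrading the object-level equivalences to genuine $2$-natural equivalences forces the \emph{local} statement that $\Phi$ and $\Psi$ are equivalences on hom-categories: every product-and-direct-limit-preserving functor $\mathcal{D}\to\mathcal{E}$ arises, uniquely up to isomorphism, from an exact functor $\mathrm{fun}(\mathcal{E})\to\mathrm{fun}(\mathcal{D})$, and similarly for natural transformations. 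This ``every definable functor is an interpretation'' principle, together with checking $2$-naturality against arbitrary $1$-cells, is where I expect the main obstacle to lie; the reductions above are otherwise formal once the purity-theoretic reconstruction is established.
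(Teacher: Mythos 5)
A preliminary point of comparison: the paper does not prove this statement at all. Theorem \ref{Thm ABEX DEF} is quoted as background from \cite{PR10} (Theorem 2.3), in a section whose results are explicitly ``stated without proof.'' So your proposal can only be measured against the proof in the cited source and against the ingredients of that proof which this paper does invoke elsewhere: the evaluation equivalence $\mathscr{A} \simeq \mathrm{fun}(\mathrm{Ex}(\mathscr{A},\mathbf{Ab}))$, $A \mapsto \mathrm{ev}_A$ (\cite{PR10}, Theorem 2.2), the evaluation functor $\epsilon_{\mathcal{D}}:\mathcal{D} \to \mathrm{Ex}(\mathrm{fun}(\mathcal{D}),\mathbf{Ab})$ from the proof of \cite{PR10}, Theorem 2.3, and the identification $\mathrm{fun}(\mathcal{D}) \simeq \mathcal{C}^{\mathrm{fp}}\hbox{-}\mathrm{mod}/\mathsf{S}$ (Theorem \ref{Thm fun(D)}). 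Your overall architecture --- contravariant 2-functors given by precomposition, evaluation maps as unit and counit, reduction of the counit to the Serre-quotient description of $\mathrm{fun}(\mathcal{D})$ together with the reconstruction $\mathcal{C} \simeq \mathrm{Ex}(\mathcal{C}^{\mathrm{fp}}\hbox{-}\mathrm{mod},\mathbf{Ab})$ --- is the same as in that source, and you correctly locate the hard purity-theoretic content.

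There is, however, a genuine gap in your treatment of the unit. You propose to ``apply the base reconstruction with $\mathcal{C}=\mathrm{Ex}(\mathscr{A},\mathbf{Ab})$, giving $\mathrm{fun}(\mathrm{Ex}(\mathscr{A},\mathbf{Ab}))\simeq\mathcal{C}^{\mathrm{fp}}\hbox{-}\mathrm{mod}$.'' Both the reconstruction and that formula for $\mathrm{fun}$ require $\mathcal{C}$ to be \emph{finitely accessible} with products, whereas $\mathrm{Ex}(\mathscr{A},\mathbf{Ab})$ is only a definable category, and definable categories are in general not finitely accessible. For instance, the divisible abelian groups form a definable subcategory of $\mathbf{Ab}$ with no nonzero finitely presentable objects (neither $\mathbb{Q}$ nor any Pr\"ufer group is finitely presentable there: each admits a morphism into a direct limit of finite powers of $\mathbb{Q}/\mathbb{Z}$, resp. of $\mathbb{Z}(p^{\infty})$, whose image meets infinitely many coordinates), and this category is of the form $\mathrm{Ex}(\mathscr{A},\mathbf{Ab})$; for such $\mathscr{A}$ your reduction yields $\mathcal{C}^{\mathrm{fp}}\hbox{-}\mathrm{mod}=0$ and collapses. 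The correct route --- the one in \cite{PR10}, and the one this paper itself uses in the proof of Theorem \ref{TFAE SE tensor ideal} --- is to realise $\mathrm{Ex}(\mathscr{A},\mathbf{Ab})$ as a definable subcategory of the finitely accessible category $\mathscr{A}\hbox{-}\mathrm{Mod}=(\mathscr{A},\mathbf{Ab})$, so that $\mathrm{fun}(\mathrm{Ex}(\mathscr{A},\mathbf{Ab})) \simeq (\mathscr{A}\hbox{-}\mathrm{mod},\mathbf{Ab})^{\mathrm{fp}}/\mathsf{S}_{\mathrm{Ex}}$, and then to prove that the composite of the double Yoneda embedding $A \mapsto ((A,-),-)$ with the localisation $q$ is an equivalence; this is the universal property of the free abelian category $(\mathscr{A}\hbox{-}\mathrm{mod},\mathbf{Ab})^{\mathrm{fp}}$ on $\mathscr{A}$, a substantively different argument from the finitely-accessible reconstruction, not an instance of it. (A smaller unjustified step of the same kind: definability of $\mathrm{Ex}(\mathscr{A},\mathbf{Ab})$ in $(\mathscr{A},\mathbf{Ab})$, in particular closure under pure subobjects, is not automatic and is usually established by expressing exactness of a functor as the vanishing of suitable finitely presented functors.) With those repairs your outline does track the cited proof.
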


On morphisms the equivalence works in both directions by mapping an appropriate functor, say $F$, to precomposition by $F$, $- \circ F$, and on 2-morphisms it works in the obvious way. 

\begin{theorem} \label{Thm fun(D)} (\cite{P11},Theorem 12.10)
	Given a definable subcategory $\mathcal{D}$ of a finitely accessible category $\mathcal{C}$ with products, $\mathrm{fun}(\mathcal{D}) \simeq \mathcal{C}^{\mathrm{fp}}\hbox{-}\mathrm{mod}/\mathsf{S}$ where $\mathsf{S} \subseteq \mathcal{C}^{\mathrm{fp}}\hbox{-}\mathrm{mod}$ is the Serre subcategory corresponding to $\mathcal{D}$ (as in Theorem \ref{loc fp def Serre}).
\end{theorem}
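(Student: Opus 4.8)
The plan is to exhibit the equivalence as the factorisation of a restriction functor through a Serre quotient. First I would define
\[ R \colon \mathcal{C}^{\mathrm{fp}}\hbox{-}\mathrm{mod} \to \mathrm{fun}(\mathcal{D}), \qquad F \mapsto \overrightarrow{F}|_{\mathcal{D}}, \]
carrying a finitely presented functor $F$ to the restriction to $\mathcal{D}$ of its direct-limit-preserving extension $\overrightarrow{F}$; this is the functor obtained by applying the anti-equivalence of Theorem \ref{Thm ABEX DEF} to the inclusion $\mathcal{D} \hookrightarrow \mathcal{C}$, which is a morphism of $\mathbb{DEF}$. The first routine checks are that $R$ is well defined and exact. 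For well-definedness one notes that $\overrightarrow{F}$ commutes with direct limits by construction and with direct products because the extension of a finitely presented functor preserves products, and that both properties pass to the restriction along the inclusion of the (product- and limit-closed) subcategory $\mathcal{D}$, so that $\overrightarrow{F}|_{\mathcal{D}} \in (\mathcal{D},\mathbf{Ab})^{\to \Pi} = \mathrm{fun}(\mathcal{D})$. Exactness of $R$ follows because $\overrightarrow{(-)}$ is exact (its value at $X = \varinjlim X_i$ with $X_i \in \mathcal{C}^{\mathrm{fp}}$ is $\varinjlim F(X_i)$, and filtered colimits of exact sequences are exact) and restriction to a full subcategory is exact.

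Next I would identify the kernel and factor. By the description of the bijection in Theorem \ref{loc fp def Serre}, $\mathsf{S} = \{F : \overrightarrow{F}(X)=0 \text{ for all } X \in \mathcal{D}\}$, and $RF = 0$ precisely when $\overrightarrow{F}|_{\mathcal{D}}$ is the zero functor, i.e. when $F \in \mathsf{S}$; hence $\ker R = \mathsf{S}$. The universal property of the Serre quotient then produces a unique exact functor
\[ \bar R \colon \mathcal{C}^{\mathrm{fp}}\hbox{-}\mathrm{mod}/\mathsf{S} \to \mathrm{fun}(\mathcal{D}) \]
with $\bar R q \cong R$, where $q$ is the quotient functor, and it remains to prove that $\bar R$ is an equivalence. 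For essential surjectivity I would use that $\mathrm{fun}(\mathcal{D})$ is generated, under cokernels, by the restrictions $(A,-)|_{\mathcal{D}}$ of the representable functors for $A \in \mathcal{C}^{\mathrm{fp}}$; since each such restriction is $R$ applied to the representable $(A,-)$ and every object of $\mathcal{C}^{\mathrm{fp}}\hbox{-}\mathrm{mod}$ is a cokernel of a map of representables, exactness of $R$ shows that every object of $\mathrm{fun}(\mathcal{D})$ lies in the image of $R$, hence of $\bar R$.

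The hard part will be full faithfulness of $\bar R$, equivalently showing that $R$ realises $\mathrm{fun}(\mathcal{D})$ as the localisation of $\mathcal{C}^{\mathrm{fp}}\hbox{-}\mathrm{mod}$ at the morphisms with kernel and cokernel in $\mathsf{S}$. Concretely, I would prove that every morphism $\overrightarrow{F}|_{\mathcal{D}} \to \overrightarrow{G}|_{\mathcal{D}}$ in $\mathrm{fun}(\mathcal{D})$ is the image of a fraction $F \xleftarrow{s} F' \xrightarrow{\psi} G$ with $\ker s, \mathrm{coker}\, s \in \mathsf{S}$, and that two fractions agree after applying $R$ only if they already represent the same morphism of $\mathcal{C}^{\mathrm{fp}}\hbox{-}\mathrm{mod}/\mathsf{S}$; matching this against the calculus-of-fractions computation of $\mathrm{Hom}$ in the quotient then yields the required natural isomorphism on hom-groups. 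This lifting-and-separation step is the crux, and it is exactly where definability of $\mathcal{D}$ (closure under products, direct limits and pure subobjects) is used, since it forces morphisms of product- and limit-preserving functors on $\mathcal{D}$ to be controlled by data over $\mathcal{C}^{\mathrm{fp}}$. As a sanity check one can compare lattices: Theorem \ref{loc fp def Serre} identifies the definable subcategories of $\mathcal{C}$ contained in $\mathcal{D}$ both with the Serre subcategories of $\mathcal{C}^{\mathrm{fp}}\hbox{-}\mathrm{mod}$ containing $\mathsf{S}$ and with the Serre subcategories of $\mathrm{fun}(\mathcal{D})$, in agreement with the standard order-isomorphism between the Serre subcategories of $\mathcal{C}^{\mathrm{fp}}\hbox{-}\mathrm{mod}/\mathsf{S}$ and those of $\mathcal{C}^{\mathrm{fp}}\hbox{-}\mathrm{mod}$ containing $\mathsf{S}$.
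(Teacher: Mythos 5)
This statement is not actually proved in the paper: it is quoted from Prest (\cite{P11}, Theorem 12.10), and the paper confines itself to describing the equivalence, namely the exact functor $(\overrightarrow{-})|_{\mathcal{D}}\colon \mathcal{C}^{\mathrm{fp}}\hbox{-}\mathrm{mod}/\mathsf{S} \to \mathrm{fun}(\mathcal{D})$ obtained by factoring $F \mapsto \overrightarrow{F}|_{\mathcal{D}}$ through the localisation at $\Sigma_{\mathsf{S}}$. The first half of your proposal reproduces exactly this setup, and correctly: your $R$, the check that $\overrightarrow{F}|_{\mathcal{D}}$ lies in $(\mathcal{D},\mathbf{Ab})^{\to \Pi}$, the exactness of $R$ (pointwise, via exactness of filtered colimits in $\mathbf{Ab}$), the identification of $\ker R$ with $\mathsf{S}$, and the induced exact functor $\bar{R}$ on the Serre quotient all agree with the functor the paper describes.

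The genuine gap is that the actual content of the theorem --- that $\bar{R}$ is an equivalence --- is never established. Your essential surjectivity argument rests on the claim that $\mathrm{fun}(\mathcal{D})$ is generated under cokernels by the restricted representables $(A,-)|_{\mathcal{D}}$ with $A \in \mathcal{C}^{\mathrm{fp}}$, but this claim is not a routine input: it is essentially the theorem itself at the level of objects. Already for $\mathcal{D}=\mathcal{C}$ it asserts $\mathrm{fun}(\mathcal{C}) \simeq \mathcal{C}^{\mathrm{fp}}\hbox{-}\mathrm{mod}$, a substantial result, and nothing in the proposal explains why an arbitrary additive functor $\mathcal{D} \to \mathbf{Ab}$ commuting with products and direct limits must arise this way. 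Full faithfulness fares worse: you explicitly defer it as ``the crux,'' offering only the shape of a calculus-of-fractions argument together with the remark that definability ``controls'' morphisms, with no indication of how closure under products, direct limits and pure subobjects actually yields the lifting and separation of fractions. These two steps are precisely where all the work lies in Prest's proof (which goes through the localisation theory of the locally coherent category $\mathcal{C}^{\mathrm{fp}}\hbox{-}\mathrm{Mod}$ and the theory of pp-sorts), so what you have is a correct framing of the problem --- matching the paper's description of the equivalence --- rather than a proof of it.
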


Given a finitely presented functor $F \in \mathcal{C}^{\mathrm{fp}}\hbox{-}\mathrm{mod}$, the restriction to $\mathcal{D}$ of its extension along direct limits, $(\overrightarrow{F})|_{\mathcal{D}}:\mathcal{D} \to \mathbf{Ab}$, commutes with direct product and direct limits and therefore is an object of $\mathrm{fun}(\mathcal{D})$. Let $\mathsf{S} \subseteq \mathcal{C}^{\mathrm{fp}}$ be the Serre subcategory corresponding to $\mathcal{D}$ and recall that $\mathcal{C}^{\mathrm{fp}}\hbox{-}\mathrm{mod}/\mathsf{S}$ is given by formally inverting the morphisms in $\Sigma_{\mathsf{S}}=\{\alpha \in \mathrm{morph}(\mathcal{C}^{\mathrm{fp}}): \mathrm{ker}(\alpha),~\mathrm{coker}(\alpha) \in \mathsf{S}\}$. Since every morphism in $\Sigma_{\mathsf{S}}$ is an isomorphism when evaluated at any $D \in \mathcal{D}$, by the universal property of the localisation, the functor $(\overrightarrow{-})|_{\mathcal{D}}:\mathcal{C}^{\mathrm{fp}}\hbox{-}\mathrm{mod} \to \mathrm{fun}(\mathcal{D})$ factors via the localisation $\mathcal{C}^{\mathrm{fp}}\hbox{-}\mathrm{mod}/\mathsf{S}$. The equivalence in Theorem \ref{Thm fun(D)} is given by the exact functor $(\overrightarrow{-})|_{\mathcal{D}}:\mathcal{C}^{\mathrm{fp}}\hbox{-}\mathrm{mod}/\mathsf{S} \to \mathrm{fun}(\mathcal{D})$ induced by this factorisation.

\subsection{The 2-categories $\mathbb{DEF}^{\otimes}$ and $\mathbb{ABEX}^{\otimes}$}

In this section we define the 2-categories $\mathbb{ABEX}^{\otimes}$ and $\mathbb{DEF}^{\otimes}$.

\begin{notation}
	Every monoidal category is monoidally equivalent to a strict monoidal category (\cite{ML13}, Section XI, Subsection 3, Theorem 1). Therefore we are safe to suppress all unitors and associators, treating them as identities.  
\end{notation}

\begin{definition}
	We will say that a functor $F:\mathscr{A} \to \mathscr{B}$ between monoidal categories $(\mathscr{A}, \otimes, 1_{\mathscr{A}})$ and $(\mathscr{B},\otimes', 1_{\mathscr{B}})$ is \textbf{monoidal} if there exists an isomorphism in $\mathscr{B}$, $\epsilon:1_{\mathscr{B}} \to F(1_{\mathscr{A}})$ and a natural isomorphism $\mu:(\otimes' \circ F \times F) \to (F \circ \otimes)$ satisfying the associativity condition $\mu_{X \otimes Y, Z} \circ (\mu_{X, Y} \otimes' F(Z))=\mu_{X, Y \otimes Z} \circ (F(X) \otimes' \mu_{Y, Z})$ and the unitality conditions, $\mu_{1_{\mathscr{A}}, X} \circ (\epsilon \otimes' F(X))=id_{1_{\mathscr{B}} \otimes' F(X)}$ and $\mu_{X, 1_{\mathscr{A}}} \circ (F(X) \otimes' \epsilon)=id_{F(X) \otimes' 1_{\mathscr{B}}}$.
\end{definition}

\begin{definition}
	Let $\mathbb{ABEX}^{\otimes}$ denote the 2-category with objects given by skeletally small abelian categories equipped with an additive symmetric monoidal structure which is exact in each variable, morphisms given by additive exact monoidal functors and 2-morphisms given by natural transformations. 
\end{definition}

\begin{notation}
	Given a category $\mathscr{C}$ and morphisms $f:A \to B$ and $k:A \to C$ in $\mathscr{C}$, we will write $f | k$ if there exists some morphism $k':B \to C$ in $\mathscr{C}$ such that $k=k' \circ f$.
\end{notation}

\begin{definition}
Let $\mathcal{C}$ be a finitely accessible category with products and an additive symmetric closed monoidal structure such that $\mathcal{C}^{\mathrm{fp}}$ is a monoidal subcategory. 

We say that a definable subcategory $\mathcal{D} \subseteq \mathcal{C}$ is \textbf{fp-hom-closed} if for every $A \in \mathcal{C}^{\mathrm{fp}}$ and $X \in \mathcal{D}$, $\mathrm{hom}(A,X) \in \mathcal{D}$, where $\mathrm{hom}$ denotes the internal hom-functor. 

We say that a definable subcategory $\mathcal{D} \subseteq \mathcal{C}$ satisfies the \textbf{exactness criterion} if given morphisms $f:A \to B$ and $g:U \to V$ in $\mathcal{C}^{\mathrm{fp}}$ and a morphism $h:A \otimes U \to X$ in $\mathcal{C}$ where $X \in \mathcal{D}$, if $(f \otimes U) | h$ and $(A \otimes g)|h$ then $(f \otimes g)|h$.   
\end{definition}

\begin{definition}
	We define the 2-category $\mathbb{DEF}^{\otimes}$ as follows. Let the objects of $\mathbb{DEF}^{\otimes}$ be given by the triples $(\mathcal{D}, \mathcal{C}, \otimes)$ where $\mathcal{C}$ is a finitely accessible category with products, $\otimes$ is an additive symmetric closed monoidal structure on $\mathcal{C}$ such that $\mathcal{C}^{\mathrm{fp}}$ is a monoidal subcategory and $\mathcal{D}$ is an fp-hom-closed definable subcategory of $\mathcal{C}$ satisfying the exactness criterion. Let the morphisms in $\mathbb{DEF}^{\otimes}$ be given by the additive functors $I: \mathcal{D} \to \mathcal{D'}$ which commute with direct product and direct limits and such that the induced functor $I_0:\mathrm{fun}(\mathcal{D'}) \to \mathrm{fun}(\mathcal{D})$ (see \cite{PR10}, Theorem 2.3) is monoidal.
\end{definition}

\begin{remark}
	 Notice that there exist forgetful 2-functors $\mathscr{F}:\mathbb{ABEX}^{\otimes} \to \mathbb{ABEX}$ and $\mathscr{F}:\mathbb{DEF}^{\otimes} \to \mathbb{DEF}$ which forget the monoidal structure. 
\end{remark}

\section{The 2-category anti-equivalence} \label{section 2-cat equiv}

In this section we prove the main theorem of the paper.

\begin{theorem} \label{Thm 2-cat eq}
	There exists a 2-category anti-equivalence between $\mathbb{ABEX}^{\otimes}$ and $\mathbb{DEF}^{\otimes}$ given on objects by $\mathscr{A} \mapsto (\mathrm{Ex}(\mathscr{A},\mathbf{Ab}),\mathscr{A}\hbox{-}\mathrm{Mod}, \otimes)$ where the monoidal structure, $\otimes$, on $\mathscr{A}\hbox{-}\mathrm{Mod}$ is induced by the monoidal structure on $\mathscr{A}$ via Day convolution product. Conversely, the anti-equivalence maps an object $(\mathcal{D}, \mathcal{C}, \otimes)$ in $\mathbb{DEF}^{\otimes}$ to the skeletally small abelian category $\mathrm{fun}(\mathcal{D})=(\mathcal{D},\mathbf{Ab})^{\Pi \to}$ with monoidal structure induced by Day convolution product on $\mathcal{C}^{\mathrm{fp}}\hbox{-}\mathrm{mod}$ (see Definition \ref{remark strict}). 
\end{theorem}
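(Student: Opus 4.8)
The plan is to build on the original (non-monoidal) 2-category anti-equivalence of Theorem \ref{Thm ABEX DEF}, and to show that the extra monoidal data carried by $\mathbb{ABEX}^{\otimes}$ and $\mathbb{DEF}^{\otimes}$ is transported faithfully in both directions. Concretely, the underlying functors $\mathscr{A} \mapsto \mathrm{Ex}(\mathscr{A},\mathbf{Ab})$ and $(\mathcal{D},\mathcal{C},\otimes) \mapsto \mathrm{fun}(\mathcal{D})$ are already known to form an anti-equivalence after applying the forgetful 2-functors $\mathscr{F}$; so the task reduces to checking that (i) these assignments land in the correct monoidal 2-categories, (ii) they are well-defined on morphisms and 2-morphisms, and (iii) the unit and counit of the original adjunction/equivalence can be upgraded to the monoidal setting. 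I would structure the argument as four lemmas matching these obligations.

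First I would verify that the object assignments are well-defined. For $\mathscr{A} \in \mathbb{ABEX}^{\otimes}$, the Day convolution product of Theorem \ref{Day} endows $\mathscr{A}\hbox{-}\mathrm{Mod}$ with an additive symmetric closed monoidal structure restricting to $\mathscr{A}\hbox{-}\mathrm{mod}=\mathscr{A}$ on representables, and I must check that $\mathrm{Ex}(\mathscr{A},\mathbf{Ab})$, viewed as a definable subcategory of $\mathscr{A}\hbox{-}\mathrm{Mod}$, is fp-hom-closed and satisfies the exactness criterion. The exactness criterion should follow from the hypothesis that $\otimes$ on $\mathscr{A}$ is exact in each variable (indeed this is the point of the criterion, by Theorem \ref{prop exactness only if} and Proposition \ref{Lem exactness if}), and fp-hom-closure from the compatibility of the internal hom with exactness. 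Conversely, for $(\mathcal{D},\mathcal{C},\otimes) \in \mathbb{DEF}^{\otimes}$ I must check that $\mathrm{fun}(\mathcal{D}) \simeq \mathcal{C}^{\mathrm{fp}}\hbox{-}\mathrm{mod}/\mathsf{S}$ carries an exact additive symmetric monoidal structure; this is exactly where the fp-hom-closed and exactness hypotheses on $\mathcal{D}$ are used, via Theorem \ref{TFAE SE tensor ideal} (the Serre subcategory $\mathsf{S}$ is a Serre tensor-ideal) and Theorem \ref{prop exactness only if}, so that the induced Day structure on $\mathcal{C}^{\mathrm{fp}}\hbox{-}\mathrm{mod}$ descends to the localisation and is exact.

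Next I would treat morphisms. A morphism in $\mathbb{DEF}^{\otimes}$ is by definition an interpretation functor $I:\mathcal{D}\to\mathcal{D}'$ commuting with products and direct limits whose induced functor $I_0:\mathrm{fun}(\mathcal{D}')\to\mathrm{fun}(\mathcal{D})$ is monoidal; and a morphism in $\mathbb{ABEX}^{\otimes}$ is an exact monoidal functor. Since the original anti-equivalence sends a morphism to precomposition (equivalently $I \mapsto I_0$ and an exact functor $G \mapsto -\circ G$), the definitions have been rigged so that monoidality is preserved almost tautologically in one direction; the content is to show the correspondence is bijective on hom-categories, i.e. that every exact monoidal functor $\mathrm{fun}(\mathcal{D}')\to\mathrm{fun}(\mathcal{D})$ arises as $I_0$ for a unique (up to iso) product-and-limit-preserving $I$, and that $I_0$ monoidal forces the corresponding abelian-side functor to be monoidal. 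The 2-morphism check is then routine naturality, noting that monoidal natural transformations on one side correspond to natural transformations on the other.

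The main obstacle I expect is the coherence bookkeeping in the object assignment, specifically showing that the monoidal structure really descends to the Gabriel localisation $\mathcal{C}^{\mathrm{fp}}\hbox{-}\mathrm{mod}/\mathsf{S} \simeq \mathrm{fun}(\mathcal{D})$ and agrees, under the equivalence of Theorem \ref{Thm fun(D)}, with the pointwise tensor of the restricted evaluation functors $(\overrightarrow{-})|_{\mathcal{D}}$. This requires that $\mathsf{S}$ be a tensor-ideal (so that $F\otimes G \in \mathsf{S}$ whenever $F\in\mathsf{S}$), which is supplied by Theorem \ref{TFAE SE tensor ideal}, together with the fact that the localisation functor is monoidal when the kernel is a tensor-ideal (Definition \ref{remark strict}); verifying that the localised tensor is still symmetric, closed, and exact — rather than merely right exact — is precisely the role of the exactness criterion and is the delicate quantitative heart of the proof. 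Once the two object assignments are shown to be mutually inverse up to monoidal natural isomorphism (lifting the unit and counit of Theorem \ref{Thm ABEX DEF} to monoidal isomorphisms, which follows because those isomorphisms are built from the same evaluation/representable data that defines the monoidal structures), the 2-categorical anti-equivalence follows by combining with the already-established underlying anti-equivalence.
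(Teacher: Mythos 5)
Your proposal is correct and follows essentially the same route as the paper: reduce to the Prest--Rajani anti-equivalence, verify the object assignments land in $\mathbb{ABEX}^{\otimes}$ and $\mathbb{DEF}^{\otimes}$ via the fp-hom-closed/Serre-tensor-ideal correspondence (Theorem \ref{thm}, Theorem \ref{TFAE SE tensor ideal}) and the exactness-criterion/exact-tensor equivalence (Proposition \ref{prop exactness only if}, Proposition \ref{Lem exactness if}) together with the monoidal localisation of Definition \ref{remark strict}, handle morphisms by the monoidality built into $\mathbb{DEF}^{\otimes}$ in one direction and the monoidal equivalence $\mathscr{A} \simeq \mathrm{fun}(\mathrm{Ex}(\mathscr{A},\mathbf{Ab}))$ in the other, and finally upgrade the unit and counit to monoidal (iso)morphisms, which the paper does via the monoidal Yoneda embedding $q \circ \mathcal{Y}^2$ and a monoidal quasi-inverse. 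The only cosmetic slips are citing Theorem \ref{TFAE SE tensor ideal} where Theorem \ref{thm} is the relevant general statement for an arbitrary $(\mathcal{D},\mathcal{C},\otimes)$, and asking the localised tensor to be closed when $\mathbb{ABEX}^{\otimes}$ only requires exactness in each variable.
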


We prove Theorem \ref{Thm 2-cat eq} in several parts. 

\subsection{The 2-functor $\theta:\mathbb{DEF}^{\otimes} \to \mathbb{ABEX}^{\otimes}$}

First let us define a 2-functor $\theta:\mathbb{DEF}^{\otimes} \to \mathbb{ABEX}^{\otimes}$. On objects, $\theta$ maps $(\mathcal{D},\mathcal{C},\otimes)$ in $\mathbb{DEF}^{\otimes}$ to $\mathrm{fun}(\mathcal{D})$. In Theorem \ref{thm} we show that the Serre subcategory of $\mathcal{C}^{\mathrm{fp}}\hbox{-}\mathrm{mod}$ corresponding to $\mathcal{D}$ is a Serre tensor-ideal. We use this in Definition \ref{remark strict} to define an additive symmetric monoidal structure on $\mathrm{fun}(\mathcal{D})$. We then show that the monoidal structure is exact in each variable in Proposition \ref{prop exactness only if}.

\begin{assumption} \label{Rmk C}
	Let $\mathcal{C}$ be an additive finitely accessible category with products. Suppose further that $\mathcal{C}$ has an additive closed symmetric monoidal structure such that $\mathcal{C}^{\mathrm{fp}}$ is a monoidal subcategory. Induce a monoidal structure on $\mathcal{C}^{\mathrm{fp}}\hbox{-}\mathrm{Mod}$ via Day convolution product and note that this restricts to a monoidal structure on $\mathcal{C}^{\mathrm{fp}}\hbox{-}\mathrm{mod}$. We denote all tensor products by $\otimes$. Note that the monoidal structures on $\mathcal{C}$ and $\mathcal{C}^{\mathrm{fp}}\hbox{-}\mathrm{Mod}$ are assumed to be closed, and therefore in both cases the tensor product functor, $\otimes$, is right exact in each variable. Furthermore, as $\mathcal{C}$ is an additive finitely accessible category with products, $\mathcal{C}^{\mathrm{fp}}\hbox{-}\mathrm{Mod}$ is locally coherent (\cite{P11}, Theorem 6.1) and therefore $\mathcal{C}^{\mathrm{fp}}\hbox{-}\mathrm{mod}$ is an abelian subcategory of $\mathcal{C}^{\mathrm{fp}}\hbox{-}\mathrm{Mod}$ (e.g. see \cite{BRB}, Theorem E.1.47). Therefore, every exact sequence in $\mathcal{C}^{\mathrm{fp}}\hbox{-}\mathrm{mod}$ is exact in $\mathcal{C}^{\mathrm{fp}}\hbox{-}\mathrm{Mod}$ and consequently the restriction of Day convolution product to $\mathcal{C}^{\mathrm{fp}}\hbox{-}\mathrm{mod}$ is also right exact in each variable.    
\end{assumption}

Before we prove Theorem \ref{thm}, we prove some useful Lemmas. The first uses the tensor-hom adjunction of a closed monoidal category to find a natural isomorphism between two functors. 

\begin{lemma} \label{lemma rep}
	Let $\mathcal{C}$ be as in Assumption \ref{Rmk C} and induce a monoidal structure on $\mathcal{C}^{\mathrm{fp}}\hbox{-}\mathrm{Mod}$ via Day convolution product. Then for all $F \in \mathcal{C}^{\mathrm{fp}}\hbox{-}\mathrm{Mod}$ and $X \in \mathcal{C}^{\mathrm{fp}}$, $(X,-) \otimes F$ is naturally isomorphic to $\overrightarrow{F} \circ \mathrm{hom}(X,-)|_{\mathcal{C}^{\mathrm{fp}}}$ where $\mathrm{hom}(X,-):\mathcal{C} \to \mathcal{C}$ denotes the internal hom-functor and $\mathrm{hom}(X,-)|_{\mathcal{C}^{\mathrm{fp}}}:\mathcal{C}^{\mathrm{fp}} \to \mathcal{C}$ is the restriction to finitely presented objects.  
\end{lemma}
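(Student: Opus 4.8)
The plan is to reduce the claim to the case where $F$ is representable and then bootstrap. Write $\Phi(F) := (X,-) \otimes F$ and $\Psi(F) := \overrightarrow{F} \circ \mathrm{hom}(X,-)|_{\mathcal{C}^{\mathrm{fp}}}$, both regarded as objects of $\mathcal{C}^{\mathrm{fp}}\hbox{-}\mathrm{Mod}$. The functor $\Phi$, being of the form $(X,-) \otimes -$, is right exact and commutes with direct limits. For $\Psi$, I would first record that $F \mapsto \overrightarrow{F}$ is computed pointwise by $\overrightarrow{F}(Y) = \varinjlim_i F(Y_i)$ for any expression $Y = \varinjlim_i Y_i$ with $Y_i \in \mathcal{C}^{\mathrm{fp}}$; since direct limits are exact in $\mathbf{Ab}$ and colimits of modules are computed pointwise, $F \mapsto \overrightarrow{F}$ is right exact and commutes with direct limits, and these properties survive postcomposition with $\mathrm{hom}(X,-)|_{\mathcal{C}^{\mathrm{fp}}}$. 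Since every object of $\mathcal{C}^{\mathrm{fp}}\hbox{-}\mathrm{Mod}$ is a direct limit of finitely presented modules, each of which is the cokernel of a map between representables, it suffices to produce a natural isomorphism $\Phi \cong \Psi$ on representables.

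Next I would compute both sides on a representable $F = (W,-)$ with $W \in \mathcal{C}^{\mathrm{fp}}$. For $\Phi$, the identity $(X,-) \otimes (W,-) \cong (X \otimes W, -)$ recalled in the preliminaries gives $\Phi((W,-)) \cong (X \otimes W, -)$, where $X \otimes W \in \mathcal{C}^{\mathrm{fp}}$ because $\mathcal{C}^{\mathrm{fp}}$ is a monoidal subcategory. For $\Psi$, note that $\mathcal{C}(W,-) : \mathcal{C} \to \mathbf{Ab}$ restricts to $(W,-)$ on $\mathcal{C}^{\mathrm{fp}}$ and commutes with direct limits precisely because $W$ is finitely presented; by uniqueness of the direct-limit-preserving extension this gives $\overrightarrow{(W,-)} \cong \mathcal{C}(W,-)$. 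Evaluating at $A \in \mathcal{C}^{\mathrm{fp}}$ and applying the tensor--hom adjunction of the closed monoidal structure on $\mathcal{C}$ then yields
\[ \Psi((W,-))(A) = \mathcal{C}\big(W, \mathrm{hom}(X,A)\big) \cong \mathcal{C}(X \otimes W, A) = (X \otimes W, -)(A), \]
which matches $\Phi((W,-))(A)$.

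Finally I would verify that these isomorphisms are natural in $A$ and in $W$: naturality in $A$ is exactly the naturality of the adjunction isomorphism $\mathcal{C}(X \otimes W, A) \cong \mathcal{C}(W, \mathrm{hom}(X,A))$, while naturality in $W$ follows from the same adjunction together with the naturality of $(X,-) \otimes (W,-) \cong (X \otimes W,-)$. With a natural isomorphism $\Phi \cong \Psi$ in hand on the full subcategory of representables, I extend it to all of $\mathcal{C}^{\mathrm{fp}}\hbox{-}\mathrm{Mod}$: it descends to cokernels of maps of representables by right exactness and then to arbitrary modules by commutation with direct limits.

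I expect the main obstacle to be this bootstrapping step rather than the computation on representables. One must verify carefully that $F \mapsto \overrightarrow{F}$ (hence $\Psi$) is genuinely right exact and commutes with direct limits, so that the reduction to representables is legitimate, and one must check that the isomorphism constructed on representables is natural in $F$, so that it descends compatibly through the chosen presentations and direct-limit expressions of a general $F$. The underlying identification on representables, by contrast, is a direct application of the tensor--hom adjunction and is the straightforward part.
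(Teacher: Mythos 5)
Your proposal is correct and takes essentially the same approach as the paper's proof. The paper merges your representable computation and the cokernel-descent step into a single diagram: for finitely presented $F$ it applies the tensor--hom adjunction to the two terms of a presentation $(B,-) \to (A,-) \to F \to 0$ (obtaining the exact bottom row $(B,\mathrm{hom}(X,Z)) \to (A,\mathrm{hom}(X,Z)) \to \overrightarrow{F}(\mathrm{hom}(X,Z)) \to 0$ by citation to \cite{BRB} rather than by your argument that $\overrightarrow{(-)}$ is exact), and it then passes to arbitrary $F$ via direct limits with exactly the naturality checks you flag as the remaining work.
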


\begin{proof}
	First suppose $F$ is finitely presentable with presentation $(B,-) \xrightarrow{(f,-)} (A,-) \xrightarrow{\pi_f} F \to 0$, with $f:A \to B$ in $\mathcal{C}^{\mathrm{fp}}$ and suppose $X \in \mathcal{C}^{\mathrm{fp}}$. Then $(X,-) \otimes F$ has presentation $(X \otimes B,-) \xrightarrow{(X \otimes f,-)} (X \otimes A,-) \xrightarrow{\pi_{X \otimes f}} (X,-) \otimes F \to 0$, where $X \otimes f:X \otimes A \to X \otimes B$ is in $\mathcal{C}^{\mathrm{fp}}$. For any $Z \in \mathcal{C}^{\mathrm{fp}}$ we have the following diagram in $\mathbf{Ab}$.
	
	\noindent \begin{tikzpicture}
	\matrix (m) [matrix of math nodes,row sep=5em,column sep=4em,minimum width=2em]
	{
		(X \otimes B,Z) & (X \otimes A,Z) & ((X,-) \otimes F)(Z) & 0 \\
		(B, \mathrm{hom}(X,Z)) & (A, \mathrm{hom}(X,Z)) & (\overrightarrow{F} \circ \mathrm{hom}(X,-)|_{\mathcal{C}^{\mathrm{fp}}})(Z) & 0 \\};
	\path[-stealth]
	(m-1-1) edge node [above] {$(X \otimes f,-)_Z$} (m-1-2)
	(m-1-2) edge node [above] {$(\pi_{X \otimes f})_Z$} (m-1-3)
	(m-1-3) edge (m-1-4)
	(m-2-1) edge node [below] {$(f, \mathrm{hom}(X,-))_Z$} (m-2-2)
	(m-2-2) edge node [below] {$(\pi_{f})_{\mathrm{hom}(X,Z)}$} (m-2-3)
	(m-2-3) edge (m-2-4)
	(m-1-1) edge node [right] {$\alpha_B$} (m-2-1)
	(m-1-2) edge node [right] {$\alpha_A$}(m-2-2)
	(m-1-3) edge [dashed] node [right] {$\eta_Z$} (m-2-3);
	\end{tikzpicture}
	
	Since the isomorphisms $\alpha_{B}$ and $\alpha_{A}$ are natural in $A$ and $B$ respectively, the $\eta_Z$ form the components of a natural isomorphism $\eta:(X,-) \otimes F \to \overrightarrow{F} \circ \mathrm{hom}(X,-)|_{\mathcal{C}^{\mathrm{fp}}}$.  
		
	If $F:\mathcal{C}^{\mathrm{fp}} \to \mathbf{Ab}$ is any additive functor then $F=\underrightarrow{\mathrm{lim}}_{i \in I} F_i$ for some finitely presented functors $F_i$. For each $i \in I$, we have $\eta_i:((X,-) \otimes F_i) \to \overrightarrow{F_i} \circ \mathrm{hom}(X,-)|_{\mathcal{C}^{\mathrm{fp}}}$, defined as above. Furthermore, for any natural transformation $\lambda:F_{i} \to F_{j}$ the following diagram commutes, where $\overrightarrow{\lambda}:\overrightarrow{F_i} \to \overrightarrow{F_j}$ denotes the natural transformation with components given by the unique map between direct limits induced by $\lambda$.
	
	\noindent \begin{tikzpicture}
	\matrix (m) [matrix of math nodes,row sep=5em,column sep=4em,minimum width=2em]
	{
		(X,-) \otimes F_i & \overrightarrow{F_i} \circ \mathrm{hom}(X,-)|_{\mathcal{C}^{\mathrm{fp}}} \\
		(X, -) \otimes F_j & \overrightarrow{F_j} \circ \mathrm{hom}(X,-)|_{\mathcal{C}^{\mathrm{fp}}} \\};
	\path[-stealth]
	(m-1-1) edge node [above] {$\eta_i$} (m-1-2)
	(m-2-1) edge node [below] {$\eta_j$} (m-2-2)
	(m-1-1) edge node [left] {$(X,-) \otimes \lambda$} (m-2-1)
	(m-1-2) edge node [right] {$\overrightarrow{\lambda}_{\mathrm{hom}(X,-)|_{\mathcal{C}^{\mathrm{fp}}}} $}(m-2-2);
	\end{tikzpicture}
	
	Therefore, by the universal property of direct limits, the $\eta_i$ for $i \in I$ induce a unique natural isomorphism \[\underrightarrow{\mathrm{lim}}_{i \in I} ((X,-) \otimes F_i) \to \underrightarrow{\mathrm{lim}}_{i \in I} (\overrightarrow{F_i} \circ \mathrm{hom}(X,-)|_{\mathcal{C}^{\mathrm{fp}}})=\overrightarrow{F} \circ \mathrm{hom}(X,-)|_{\mathcal{C}^{\mathrm{fp}}}.\] Since $(X,-) \otimes -$ commutes with direct limits, \[ \underrightarrow{\mathrm{lim}}_{i \in I} ((X,-) \otimes F_i) \cong  (X,-) \otimes \underrightarrow{\mathrm{lim}}_{i \in I} F_i=(X,-) \otimes F.\] Therefore, we have a natural isomorphism $\eta: (X,-) \otimes F \to \overrightarrow{F} \circ \mathrm{hom}(X,-)|_{\mathcal{C}^{\mathrm{fp}}}$ as required.        
\end{proof}

\begin{lemma} \label{lem hom(a,-) work around}
	Let $\mathcal{C}$ be as in Assumption \ref{Rmk C}. For every $F \in \mathcal{C}^{\mathrm{fp}}\hbox{-}\mathrm{mod}$ and $C \in \mathcal{C}^{\mathrm{fp}}$, \[\overrightarrow{F} \circ \mathrm{hom}(C,-):\mathcal{C} \to \mathbf{Ab}\] commutes with direct limits. 	
\end{lemma}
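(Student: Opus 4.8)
The plan is to show that $\overrightarrow{F} \circ \mathrm{hom}(C,-)$ is nothing other than the direct-limit extension $\overrightarrow{(C,-) \otimes F}$ of the finitely presented functor $(C,-) \otimes F$, which commutes with direct limits by the very definition of the extension $\overrightarrow{(-)}$. The conceptual difficulty is that $\mathrm{hom}(C,-)\colon \mathcal{C} \to \mathcal{C}$ is a right adjoint and so cannot be expected to commute with direct limits; hence one cannot deduce the claim simply from the fact that $\overrightarrow{F}$ does. The point is rather that precomposing with $\overrightarrow{F}$ and passing through the tensor--hom adjunction reorganises the composite into a genuine extension functor, at which stage the failure of $\mathrm{hom}(C,-)$ to preserve direct limits becomes irrelevant. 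This is the global counterpart, over all of $\mathcal{C}$, of the $\mathcal{C}^{\mathrm{fp}}$-level identification in Lemma~\ref{lemma rep}.

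Concretely, I would start from a presentation $(B,-) \xrightarrow{(f,-)} (A,-) \xrightarrow{\pi_f} F \to 0$ of $F$ with $f\colon A \to B$ in $\mathcal{C}^{\mathrm{fp}}$ (Notation~\ref{notation Ff}). The extension $\overrightarrow{(-)}$ is computed pointwise as a filtered direct limit of values of $F$, and filtered direct limits are exact in $\mathbf{Ab}$, so $\overrightarrow{(-)}$ is right exact; moreover it sends a representable $(A,-)|_{\mathcal{C}^{\mathrm{fp}}}$ to the representable $(A,-)\colon \mathcal{C} \to \mathbf{Ab}$, the latter commuting with direct limits because $A \in \mathcal{C}^{\mathrm{fp}}$. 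Applying $\overrightarrow{(-)}$ to the presentation therefore yields a pointwise exact sequence of functors on $\mathcal{C}$, namely $(B,-) \xrightarrow{(f,-)} (A,-) \to \overrightarrow{F} \to 0$. Precomposing the whole sequence with $\mathrm{hom}(C,-)\colon \mathcal{C} \to \mathcal{C}$ preserves exactness, since evaluating the precomposed sequence at $X \in \mathcal{C}$ is the same as evaluating the original sequence at $\mathrm{hom}(C,X) \in \mathcal{C}$. This produces a pointwise exact sequence $(B,\mathrm{hom}(C,-)) \to (A,\mathrm{hom}(C,-)) \to G \to 0$, where $G := \overrightarrow{F} \circ \mathrm{hom}(C,-)$.

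Finally, since $C \otimes -$ is left adjoint to $\mathrm{hom}(C,-)$, the tensor--hom adjunction supplies natural isomorphisms $(A, \mathrm{hom}(C,-)) \cong (C \otimes A, -)$ and $(B, \mathrm{hom}(C,-)) \cong (C \otimes B, -)$; naturality of the adjunction in the contravariant variable identifies the map $(f, \mathrm{hom}(C,-))$ with $(C \otimes f, -)$, so these isomorphisms assemble into a morphism of exact sequences. Consequently $G$ is the cokernel of $(C \otimes f, -)\colon (C \otimes B, -) \to (C \otimes A, -)$ as functors on $\mathcal{C}$. Because $\mathcal{C}^{\mathrm{fp}}$ is a monoidal subcategory, $C \otimes A, C \otimes B \in \mathcal{C}^{\mathrm{fp}}$, and this cokernel is exactly the extension $\overrightarrow{F_{C \otimes f}} = \overrightarrow{(C,-) \otimes F}$, using the identity $(C,-) \otimes F_f = F_{C \otimes f}$ recorded after Notation~\ref{notation Ff}. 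Hence $G \cong \overrightarrow{(C,-)\otimes F}$ commutes with direct limits, as required. I expect the only delicate points to be the verification that the adjunction isomorphisms are compatible with the connecting maps (so that they genuinely give a morphism of presentations) and the justification that $\overrightarrow{(-)}$ is right exact and carries the presentation of $F$ to a presentation of $\overrightarrow{F}$ over all of $\mathcal{C}$; both are routine but worth stating carefully.
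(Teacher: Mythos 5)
Your proof is correct, and it uses the same essential ingredients as the paper's --- the presentation of $F$, right exactness of the extension $\overrightarrow{(-)}$, the tensor--hom adjunction, and finite presentability of $C \otimes A$ and $C \otimes B$ --- but it is organised along a genuinely different line. The paper argues pointwise: it fixes $Z = \underrightarrow{\mathrm{lim}}_{i \in I} Z_i$, writes $\overrightarrow{F}(\mathrm{hom}(C,Z))$ as a cokernel of hom-groups (citing \cite{BRB}, Corollary 10.2.42), uses the adjunction to replace $(A,\mathrm{hom}(C,Z))$ by $(C \otimes A,Z)$, pulls the direct limit out because $C \otimes A$ and $C \otimes B$ are finitely presented, and then compares cokernels of the resulting commutative square. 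You instead identify the whole composite $\overrightarrow{F} \circ \mathrm{hom}(C,-)$ with the pointwise cokernel of $(C \otimes f,-)$ on $\mathcal{C}$, hence with $\overrightarrow{F_{C \otimes f}} = \overrightarrow{(C,-) \otimes F}$, and conclude by the defining property of the extension. Your route proves more: the global isomorphism $\overrightarrow{F} \circ \mathrm{hom}(C,-) \cong \overrightarrow{(C,-) \otimes F}$ is precisely what the paper assembles only later, in the proof of Theorem \ref{thm}, by combining this lemma with Lemma \ref{lemma rep}; so for finitely presented $F$ your argument delivers both statements at once. The paper's route is slightly more economical in what it must justify: it never needs to know that a pointwise cokernel of a map between direct-limit-preserving functors is again the unique direct-limit-preserving extension of its restriction, which is the one step you leave implicit when asserting that this cokernel ``is exactly'' $\overrightarrow{F_{C \otimes f}}$ --- it does follow, by applying the right exactness of $\overrightarrow{(-)}$ (which you already established) to the presentation of $F_{C \otimes f}$, but it should be said.
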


\begin{proof}
	Suppose $F=F_f$ has presentation $(B,-) \xrightarrow{(f,-)} (A,-) \xrightarrow{\pi_f} F \to 0$ and express $Z \in \mathcal{C}$ as a direct limit of the directed diagram $\big( (Z_i)_{i \in I}, (\lambda_{ij}:Z_i \to Z_j)_{i \leq j}) \big)$, where each $Z_i$ is finitely presented and the limits maps are denoted by $\lambda_i:Z_i \to Z$. By (\cite{BRB}, Corollary 10.2.42), we have an exact sequence \[(B,\mathrm{hom}(C,Z)) \xrightarrow{(f,-)_{\mathrm{hom}(C,Z)}} (A,\mathrm{hom}(C,Z)) \xrightarrow{{\pi}_{\mathrm{hom}(C,Z)}} \overrightarrow{F}(\mathrm{hom}(C,Z)) \to 0.\]
	
	 By the adjunction isomorphisms and the fact that $A \otimes C$ is finitely presented we have isomorphisms 
	 \begin{align*}
	 (A,\mathrm{hom}(C,Z)) \cong (C \otimes A, Z) &=  (C \otimes A, \underrightarrow{\mathrm{lim}}_{i \in I} Z_i) 
	 \\ & \cong \underrightarrow{\mathrm{lim}}_{i \in I} (C \otimes A, Z_i)  \cong \underrightarrow{\mathrm{lim}}_{i \in I} (A, \mathrm{hom}(C,Z_i))
	 \end{align*}  and similarly with $A$ replaced by $B$. Furthermore these isomorphisms fit into the following commutative square.
	 
		\noindent \begin{tikzpicture}
	\matrix (m) [matrix of math nodes,row sep=5em,column sep=8em,minimum width=2em]
	{
		(B,\mathrm{hom}(C,Z)) & (A,\mathrm{hom}(C,Z)) \\
		 \underrightarrow{\mathrm{lim}}_{i \in I}(B,  \mathrm{hom}(C,Z_i)) & \underrightarrow{\mathrm{lim}}_{i \in I} (A,  \mathrm{hom}(C,Z_i)) \\};
	\path[-stealth]
	(m-1-1) edge node [above] {$(f,\mathrm{hom}(C,Z))$} (m-1-2)
	(m-2-1) edge node [below] {$\underrightarrow{\mathrm{lim}}_{i \in I}(f,   \mathrm{hom}(C,Z_i))$} (m-2-2)
	(m-1-1) edge node [left] {$\cong$} (m-2-1)
	(m-1-2) edge node [right] {$\cong$}(m-2-2);
	\end{tikzpicture}  
	
	Therefore the cokernels of the two horizontal maps, $\overrightarrow{F}(\mathrm{hom}(C,Z))=(\overrightarrow{F} \circ \mathrm{hom}(C,-))(\underrightarrow{\mathrm{lim}}_{i \in I} Z_i)$ and $\underrightarrow{\mathrm{lim}}_{i \in I}(\overrightarrow{F}(\mathrm{hom}(C,Z_i))$, are also isomorphic. 
\end{proof}

\begin{cor}
	Let $\mathcal{C}$ be as in Assumption \ref{Rmk C}. For all $C \in \mathcal{C}^{\mathrm{fp}}$ and $Z \in \mathcal{C}$, if $Z=\underrightarrow{\mathrm{lim}}_{i \in I}Z_i$ then $\mathrm{hom}(C,Z)$ and $\underrightarrow{\mathrm{lim}}_{i \in I}\mathrm{hom}(C,Z_i)$ belong to the same definable subcategories.
\end{cor}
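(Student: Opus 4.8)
The plan is to detect membership in definable subcategories through the extended functors $\overrightarrow{F}$. Recall from the functor-theoretic description preceding Theorem \ref{loc fp def Serre} that a subcategory $\mathcal{D} \subseteq \mathcal{C}$ is definable precisely when there is a collection $\mathcal{Y} \subseteq \mathcal{C}^{\mathrm{fp}}\hbox{-}\mathrm{mod}$ with $X \in \mathcal{D}$ if and only if $\overrightarrow{F}(X)=0$ for all $F \in \mathcal{Y}$. Consequently, two objects of $\mathcal{C}$ lie in exactly the same definable subcategories as soon as, for every $F \in \mathcal{C}^{\mathrm{fp}}\hbox{-}\mathrm{mod}$, one of them is annihilated by $\overrightarrow{F}$ if and only if the other is. Thus it suffices to show that $\overrightarrow{F}(\mathrm{hom}(C,Z))=0$ if and only if $\overrightarrow{F}(\underrightarrow{\mathrm{lim}}_{i \in I}\mathrm{hom}(C,Z_i))=0$, for each fixed $F$.

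First I would evaluate the left-hand side using the fact that $Z=\underrightarrow{\mathrm{lim}}_{i \in I}Z_i$. By Lemma \ref{lem hom(a,-) work around} the composite $\overrightarrow{F} \circ \mathrm{hom}(C,-)$ commutes with direct limits, so
\[ \overrightarrow{F}(\mathrm{hom}(C,Z)) = (\overrightarrow{F} \circ \mathrm{hom}(C,-))\big(\underrightarrow{\mathrm{lim}}_{i \in I} Z_i\big) \cong \underrightarrow{\mathrm{lim}}_{i \in I} \overrightarrow{F}(\mathrm{hom}(C,Z_i)). \]
For the right-hand side, $\overrightarrow{F}$ is by definition the extension of $F$ that commutes with direct limits, whence
\[ \overrightarrow{F}\big(\underrightarrow{\mathrm{lim}}_{i \in I} \mathrm{hom}(C,Z_i)\big) \cong \underrightarrow{\mathrm{lim}}_{i \in I} \overrightarrow{F}(\mathrm{hom}(C,Z_i)). \]
Comparing the two, both objects have image $\underrightarrow{\mathrm{lim}}_{i \in I} \overrightarrow{F}(\mathrm{hom}(C,Z_i))$ under $\overrightarrow{F}$ up to isomorphism; in particular one vanishes exactly when the other does, which is precisely what the reduction in the first paragraph requires.

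Since the argument is essentially an immediate consequence of Lemma \ref{lem hom(a,-) work around} together with the defining property of $\overrightarrow{F}$, there is no serious technical obstacle. The only point genuinely demanding care is the reduction step, namely the claim that membership in every definable subcategory is detected by the annihilation behaviour of the functors $\overrightarrow{F}$; this is exactly the content of the description of definable subcategories recorded before Theorem \ref{loc fp def Serre}, and once it is invoked the remainder of the proof is a formal manipulation of direct limits.
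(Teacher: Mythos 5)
Your proof is correct and takes essentially the same approach as the paper: the corollary is stated there as an immediate, unproved consequence of Lemma \ref{lem hom(a,-) work around}, and your argument---reducing membership in definable subcategories to annihilation by the functors $\overrightarrow{F}$ and then identifying both $\overrightarrow{F}(\mathrm{hom}(C,Z))$ and $\overrightarrow{F}(\underrightarrow{\mathrm{lim}}_{i \in I}\mathrm{hom}(C,Z_i))$ with $\underrightarrow{\mathrm{lim}}_{i \in I}\overrightarrow{F}(\mathrm{hom}(C,Z_i))$---is exactly the argument the paper leaves implicit.
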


\begin{remark}
	Note that if $\mathcal{C}^{\mathrm{fp}}$ is a rigid monoidal subcategory of $\mathcal{C}$, then for any $C \in \mathcal{C}^{\mathrm{fp}}$, $\mathrm{hom}(C,-)\cong C^{\vee} \otimes -$ commutes with direct limits. Therefore for every $Z \in \mathcal{C}$ satisfying $Z=\underrightarrow{\mathrm{lim}}_{i \in I}Z_i$, $\mathrm{hom}(C,Z)$ and $\underrightarrow{\mathrm{lim}}_{i \in I}\mathrm{hom}(C,Z_i)$ are isomorphic.
\end{remark}

Next, we simplify the criteria for a Serre subcategory of $\mathcal{C}^{\mathrm{fp}}\hbox{-}\mathrm{mod}$ to be a Serre tensor-ideal. 

\begin{lemma} \label{definable tensor ideal respresentables}
	Let $\mathcal{C}$ be as in Assumption \ref{Rmk C} and suppose $\mathsf{S}$ is a Serre subcategory of $\mathcal{C}^{\mathrm{fp}}\hbox{-}\mathrm{mod}$.
	
	Then $\mathsf{S}$ is a Serre tensor-ideal of $\mathcal{C}^{\mathrm{fp}}\hbox{-}\mathrm{mod}$ if and only if for all $C \in \mathcal{C}^{\mathrm{fp}}$ and all $F \in \mathsf{S}$,
	$$(C,-) \otimes F \in \mathsf{S}.$$
\end{lemma}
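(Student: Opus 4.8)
The plan is to prove the two implications separately, with the forward direction being immediate and the reverse direction relying on right exactness of the tensor product together with the closure properties of Serre subcategories. Recall that $\mathsf{S}$ being a Serre \emph{tensor-ideal} means that $G \otimes F \in \mathsf{S}$ for every $G \in \mathcal{C}^{\mathrm{fp}}\hbox{-}\mathrm{mod}$ and every $F \in \mathsf{S}$, so the stated condition (with $G$ restricted to representables $(C,-)$) is manifestly weaker.

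The ``only if'' direction is trivial: if $\mathsf{S}$ is a tensor-ideal, then taking $G = (C,-)$ for an arbitrary $C \in \mathcal{C}^{\mathrm{fp}}$ immediately gives $(C,-) \otimes F \in \mathsf{S}$ for all $F \in \mathsf{S}$.

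For the ``if'' direction, suppose $(C,-) \otimes F \in \mathsf{S}$ for all $C \in \mathcal{C}^{\mathrm{fp}}$ and all $F \in \mathsf{S}$, and let $G \in \mathcal{C}^{\mathrm{fp}}\hbox{-}\mathrm{mod}$ and $F \in \mathsf{S}$ be arbitrary. First I would choose a presentation $(B,-) \xrightarrow{(f,-)} (A,-) \xrightarrow{\pi_f} G \to 0$ with $A, B \in \mathcal{C}^{\mathrm{fp}}$, which exists by Notation \ref{notation Ff}. Since the induced tensor product on $\mathcal{C}^{\mathrm{fp}}\hbox{-}\mathrm{mod}$ is right exact in each variable (Assumption \ref{Rmk C}), applying $- \otimes F$ yields an exact sequence
\[ (B,-) \otimes F \to (A,-) \otimes F \xrightarrow{\pi_f \otimes F} G \otimes F \to 0 \]
in $\mathcal{C}^{\mathrm{fp}}\hbox{-}\mathrm{mod}$; here I use that $(C,-) \otimes F$ is again finitely presented (indeed $(C,-) \otimes F_f = F_{C \otimes f}$, as noted following Notation \ref{notation Ff}), so the sequence lives in the abelian category $\mathcal{C}^{\mathrm{fp}}\hbox{-}\mathrm{mod}$. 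In particular $\pi_f \otimes F$ exhibits $G \otimes F$ as a quotient of $(A,-) \otimes F$. By hypothesis (taking $C = A$) we have $(A,-) \otimes F \in \mathsf{S}$, and since a Serre subcategory is closed under quotient objects, it follows that $G \otimes F \in \mathsf{S}$.

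As $G$ and $F$ were arbitrary, this shows $\mathsf{S}$ is a Serre tensor-ideal. The only point requiring care is ensuring that the right-exact sequence obtained from tensoring the presentation genuinely lives in $\mathcal{C}^{\mathrm{fp}}\hbox{-}\mathrm{mod}$ rather than merely in $\mathcal{C}^{\mathrm{fp}}\hbox{-}\mathrm{Mod}$; this is guaranteed by Assumption \ref{Rmk C}, which records that $\mathcal{C}^{\mathrm{fp}}\hbox{-}\mathrm{mod}$ is an abelian subcategory on which the restricted Day convolution product is right exact. Beyond this, the argument is entirely formal, so I do not anticipate any substantial obstacle.
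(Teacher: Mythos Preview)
Your proof is correct and follows essentially the same approach as the paper: both argue the nontrivial direction by tensoring a presentation of the arbitrary finitely presented functor with $F$, using right exactness to exhibit $G \otimes F$ as a quotient of $(A,-) \otimes F \in \mathsf{S}$, and then invoking closure of Serre subcategories under quotients. Your added remark about the sequence living in $\mathcal{C}^{\mathrm{fp}}\hbox{-}\mathrm{mod}$ is a reasonable point of care that the paper leaves implicit.
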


\begin{proof}
	$(\implies)$ Holds by the definition of tensor-ideal. 
	
	\noindent $(\impliedby)$ Suppose that for all $C \in \mathcal{C}^{\mathrm{fp}}$ and all $F \in \mathsf{S}$,
	$(C,-) \otimes F \in \mathsf{S}$. Let $F_f \in \mathsf{S}$ and $F_g \in \mathcal{C}^{\mathrm{fp}}\hbox{-}\mathrm{mod}$ where $F_g$ has projective resolution
	$$(V,-) \xrightarrow{(g,-)} (U,-) \to F_g \to 0,$$ \noindent for $g:U \to V$ a morphism in $\mathcal{C}^{\mathrm{fp}}$.
	
	By right exactness of the induced tensor product, we have the exact sequence
	$$(V,-) \otimes F_f \to (U,-) \otimes F_f \to F_g \otimes F_f \to 0.$$
	
	By assumption, $(U,-) \otimes F_f$ is an object of $\mathsf{S}$. Therefore $F_g \otimes F_f \in \mathsf{S}$ as $\mathsf{S}$ is Serre. Hence $\mathsf{S}$ is tensor-closed as required.
\end{proof}

Now we are ready to prove Theorem \ref{thm}.

\begin{theorem} \label{thm}
		Let $\mathcal{C}$ be an additive finitely accessible category with products. Suppose further that $\mathcal{C}$ has an additive closed symmetric monoidal structure such that $\mathcal{C}^{\mathrm{fp}}$ is a monoidal subcategory. 
	
	Let $\mathcal{D}$ be a definable subcategory of $\mathcal{C}$ and $\mathsf{S} \subseteq \mathcal{C}^{\mathrm{fp}}\hbox{-}\mathrm{mod}$ be the corresponding Serre subcategory as in Theorem \ref{loc fp def Serre}. Then $\mathsf{S}$ is a tensor-ideal of $\mathcal{C}^{\mathrm{fp}}\hbox{-}\mathrm{mod}$ if and only if $\mathcal{D}$ is fp-hom-closed. 
\end{theorem}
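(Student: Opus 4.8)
The plan is to translate both conditions into statements about the direct-limit extensions $\overrightarrow{F}$ and connect them through the natural isomorphism of Lemma \ref{lemma rep}. The bridge I would build first is the identity, valid for every $C \in \mathcal{C}^{\mathrm{fp}}$, $F \in \mathcal{C}^{\mathrm{fp}}\hbox{-}\mathrm{mod}$ and $X \in \mathcal{C}$,
\[ \overrightarrow{(C,-) \otimes F}(X) \cong \overrightarrow{F}(\mathrm{hom}(C,X)). \]
Since $\mathcal{C}^{\mathrm{fp}}$ is a monoidal subcategory, $(C,-) \otimes F$ again lies in $\mathcal{C}^{\mathrm{fp}}\hbox{-}\mathrm{mod}$ (its presentation is $(C \otimes B,-) \to (C \otimes A,-) \to (C,-) \otimes F \to 0$), so $\overrightarrow{(C,-) \otimes F}$ is the unique direct-limit-preserving functor $\mathcal{C} \to \mathbf{Ab}$ restricting to $(C,-) \otimes F$ on $\mathcal{C}^{\mathrm{fp}}$. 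By Lemma \ref{lemma rep} the functor $\overrightarrow{F} \circ \mathrm{hom}(C,-)|_{\mathcal{C}^{\mathrm{fp}}}$ agrees with that restriction up to natural isomorphism, and by Lemma \ref{lem hom(a,-) work around} the functor $\overrightarrow{F} \circ \mathrm{hom}(C,-)$ itself commutes with direct limits on all of $\mathcal{C}$. Uniqueness of the direct-limit extension then forces $\overrightarrow{(C,-) \otimes F} \cong \overrightarrow{F} \circ \mathrm{hom}(C,-)$, which is the displayed identity.

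With this identity in hand both implications are short, using the annihilation correspondence of Theorem \ref{loc fp def Serre}: $F \in \mathsf{S}$ iff $\overrightarrow{F}(Y) = 0$ for all $Y \in \mathcal{D}$, and $Y \in \mathcal{D}$ iff $\overrightarrow{F}(Y) = 0$ for all $F \in \mathsf{S}$. For the direction $(\impliedby)$, assume $\mathcal{D}$ is fp-hom-closed. By Lemma \ref{definable tensor ideal respresentables} it suffices to verify $(C,-) \otimes F \in \mathsf{S}$ for every $C \in \mathcal{C}^{\mathrm{fp}}$ and $F \in \mathsf{S}$, i.e. that $\overrightarrow{(C,-) \otimes F}(X) = 0$ for all $X \in \mathcal{D}$. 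The displayed identity rewrites this as $\overrightarrow{F}(\mathrm{hom}(C,X))$; since $\mathcal{D}$ is fp-hom-closed we have $\mathrm{hom}(C,X) \in \mathcal{D}$, so this vanishes because $F \in \mathsf{S}$. For $(\implies)$, assume $\mathsf{S}$ is a tensor-ideal and fix $C \in \mathcal{C}^{\mathrm{fp}}$, $X \in \mathcal{D}$. To place $\mathrm{hom}(C,X)$ in $\mathcal{D}$ I check $\overrightarrow{F}(\mathrm{hom}(C,X)) = 0$ for every $F \in \mathsf{S}$; by the identity this equals $\overrightarrow{(C,-) \otimes F}(X)$, and as $\mathsf{S}$ is a tensor-ideal $(C,-) \otimes F \in \mathsf{S}$, so the extension vanishes at $X \in \mathcal{D}$.

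The one genuine obstacle is establishing the displayed identity globally on $\mathcal{C}$ rather than merely on $\mathcal{C}^{\mathrm{fp}}$: Lemma \ref{lemma rep} only supplies the isomorphism on finitely presented objects, and $\mathrm{hom}(C,-)$ need not commute with direct limits on its own. The crucial input is Lemma \ref{lem hom(a,-) work around}, which guarantees that the composite $\overrightarrow{F} \circ \mathrm{hom}(C,-)$ nevertheless does preserve direct limits; this legitimises the appeal to uniqueness of direct-limit extensions and upgrades the $\mathcal{C}^{\mathrm{fp}}$-level isomorphism to the global statement. Once that step is secured, everything else is a formal unwinding of the annihilation correspondence together with the reduction to representables in Lemma \ref{definable tensor ideal respresentables}.
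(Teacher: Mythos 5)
Your proposal is correct and follows essentially the same route as the paper's own proof: both establish the key identity $\overrightarrow{(C,-) \otimes F} \cong \overrightarrow{F} \circ \mathrm{hom}(C,-)$ by combining Lemma \ref{lemma rep} with Lemma \ref{lem hom(a,-) work around} (the latter justifying the appeal to uniqueness of direct-limit extensions, exactly the obstacle you flag), and then unwind the annihilation correspondence of Theorem \ref{loc fp def Serre} together with the reduction to representables in Lemma \ref{definable tensor ideal respresentables}. The only difference is presentational: the paper phrases the argument as a single chain of equivalences, whereas you separate the two implications.
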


\begin{proof}
	Recall that the functors in $\mathsf{S}$ are exactly those whose unique extension along direct limits annihilates $\mathcal{D}$. Therefore $\mathcal{D}$ is fp-hom-closed if and only if for every $A \in \mathcal{C}^{\mathrm{fp}}$, $X \in \mathcal{D}$ and every $F \in \mathsf{S}$, $\overrightarrow{F}(\mathrm{hom}(A,X))=0$. By Lemma \ref{lem hom(a,-) work around}, $\overrightarrow{F} \circ \mathrm{hom}(A,-)$ commutes with direct limits and therefore $\overrightarrow{F} \circ \mathrm{hom}(A,-)=\overrightarrow{\overrightarrow{F} \circ \mathrm{hom}(A,-)|_{\mathcal{C}^{\mathrm{fp}}}}$. Furthermore, by Lemma \ref{lemma rep}, we have $(A,-) \otimes F \cong \overrightarrow{F} \circ \mathrm{hom}(A,-)|_{\mathcal{C}^{\mathrm{fp}}}:\mathcal{C}^{\mathrm{fp}} \to \mathbf{Ab}$, so $\overrightarrow{F} \circ \mathrm{hom}(A,-) \cong \overrightarrow{(A,-) \otimes F}$.
	
	Therefore, $\mathcal{D}$ is fp-hom-closed if and only if for every $A \in \mathcal{C}^{\mathrm{fp}}$, $X \in \mathcal{D}$ and $F \in \mathsf{S}$, $\overrightarrow{(A,-) \otimes F}(X)=0$, equivalently $(A,-) \otimes F \in \mathsf{S}$.
	
	Finally note that $\mathsf{S}$ is a Serre tensor-ideal if and only if it is closed under tensoring with representable functors (see Lemma \ref{definable tensor ideal respresentables}). 
\end{proof}	

By Theorem \ref{thm}, if $(\mathcal{D},\mathcal{C},\otimes)$ is an object of $\mathbb{DEF}^{\otimes}$, then the corresponding Serre subcategory of $\mathcal{C}^{\mathrm{fp}}\hbox{-}\mathrm{mod}$ is a Serre tensor-ideal. Next we use this to define a monoidal structure on $\mathrm{fun}(\mathcal{D})$. We prove the following lemma first.

\begin{lemma} \label{(x,-) otimes - exact}
Let $\mathcal{C}$ be as in Assumption \ref{Rmk C}. Then $(C,-) \otimes -:\mathcal{C}^{\mathrm{fp}}\hbox{-}\mathrm{Mod} \to \mathcal{C}^{\mathrm{fp}}\hbox{-}\mathrm{Mod}$ is exact for all $C \in \mathcal{C}^{\mathrm{fp}}$.

If we assume further that $C \in \mathcal{C}^{\mathrm{fp}}$ is rigid then so is $(C,-)$ with dual given by $(C^{\vee},-)$. 
\end{lemma}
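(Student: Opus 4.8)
The plan is to treat the two assertions separately: exactness of $(C,-)\otimes-$ for arbitrary $C\in\mathcal{C}^{\mathrm{fp}}$ first, and the rigidity claim second.

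For exactness, I would begin by recalling that the induced monoidal structure on $\mathcal{C}^{\mathrm{fp}}\hbox{-}\mathrm{Mod}$ is closed (Assumption \ref{Rmk C}), so $(C,-)\otimes-$ is a left adjoint and hence right exact; it therefore remains only to show it preserves monomorphisms. Since $\mathcal{C}^{\mathrm{fp}}\hbox{-}\mathrm{Mod}=(\mathcal{C}^{\mathrm{fp}},\mathbf{Ab})$ is a functor category in which kernels and cokernels are computed pointwise, this can be tested after evaluating at each $Z\in\mathcal{C}^{\mathrm{fp}}$. The key input is Lemma \ref{lemma rep}, which supplies an isomorphism $((C,-)\otimes F)(Z)\cong\overrightarrow{F}(\mathrm{hom}(C,Z))$; I would confirm that this isomorphism is natural in $F$ (the construction in that proof, via the universal property of direct limits and compatibility with maps $\lambda\colon F_i\to F_j$, already delivers this), so that it suffices to prove that $F\mapsto\overrightarrow{F}(\mathrm{hom}(C,Z))$ is an exact functor $\mathcal{C}^{\mathrm{fp}}\hbox{-}\mathrm{Mod}\to\mathbf{Ab}$ for each fixed $Z$.

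To finish this, write $\mathrm{hom}(C,Z)=\underrightarrow{\mathrm{lim}}_{i\in I}W_i$ as a direct limit of finitely presented objects $W_i\in\mathcal{C}^{\mathrm{fp}}$. Since $\overrightarrow{F}$ commutes with direct limits and restricts to $F$ on $\mathcal{C}^{\mathrm{fp}}$, there is a natural isomorphism $\overrightarrow{F}(\mathrm{hom}(C,Z))\cong\underrightarrow{\mathrm{lim}}_{i\in I}F(W_i)$. Thus $F\mapsto\overrightarrow{F}(\mathrm{hom}(C,Z))$ is a filtered colimit of the evaluation functors $\mathrm{ev}_{W_i}\colon F\mapsto F(W_i)$, each of which is exact because exactness in $\mathcal{C}^{\mathrm{fp}}\hbox{-}\mathrm{Mod}$ is pointwise; as filtered colimits are exact in $\mathbf{Ab}$, the functor is exact. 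This gives exactness of $(C,-)\otimes-$.

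For the rigidity statement, I would exploit that the contravariant Yoneda embedding $A\mapsto(A,-)$ is a strong monoidal functor $(\mathcal{C}^{\mathrm{fp}})^{\mathrm{op}}\to\mathcal{C}^{\mathrm{fp}}\hbox{-}\mathrm{Mod}$: it sends the monoidal unit to the Day convolution unit $(1,-)$, and the natural isomorphism $(A,-)\otimes(B,-)\cong(A\otimes B,-)$ furnishes the monoidal constraint. Since strong monoidal functors carry dual pairs to dual pairs, and since a duality $(\eta,\epsilon)$ exhibiting $C^\vee$ as dual to $C$ in the symmetric monoidal category $\mathcal{C}^{\mathrm{fp}}$ also exhibits one in $(\mathcal{C}^{\mathrm{fp}})^{\mathrm{op}}$ (with evaluation and coevaluation interchanged and composed with the symmetry $\sigma$), the image pair exhibits $(C^\vee,-)$ as dual to $(C,-)$. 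Concretely, the counit and unit arise by applying the embedding to $\epsilon\circ\sigma\colon C^\vee\otimes C\to 1$ and $\sigma\circ\eta\colon 1\to C\otimes C^\vee$, and the triangle identities in $\mathcal{C}^{\mathrm{fp}}\hbox{-}\mathrm{Mod}$ follow from applying the monoidal functor to the corresponding identities in $\mathcal{C}^{\mathrm{fp}}$.

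I expect the main obstacle to lie in the first part, namely recognising that evaluating the Day tensor product against the internal hom expresses $(C,-)\otimes-$ pointwise as a filtered colimit of exact evaluation functors, together with the checking that the isomorphism of Lemma \ref{lemma rep} is natural in $F$ (not merely in $Z$) so that the pointwise argument genuinely transports monomorphisms. The rigidity part is essentially formal once the monoidal structure of the contravariant Yoneda embedding is set up; there the only care required is the bookkeeping of variance and symmetry when moving the duality data into $\mathcal{C}^{\mathrm{fp}}\hbox{-}\mathrm{Mod}$.
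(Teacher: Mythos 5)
Your proposal is correct, but it takes a genuinely different route from the paper on both halves of the statement. For exactness, the paper shows that $(C,-)\otimes -$ is not only a left adjoint but also a \emph{right} adjoint: it explicitly constructs a left adjoint $\mathbf{L}_C$, defined on a finitely presented $F_f$ by applying $\mathrm{hom}(C,-)$ to the presenting morphism (so $\mathbf{L}_C(F_f)$ is presented by $(\mathrm{hom}(C,f),-)$), extended along direct limits, with unit and counit assembled from those of the adjunction $C\otimes -\dashv \mathrm{hom}(C,-)$ in $\mathcal{C}$; left exactness is then automatic. You instead argue pointwise: via Lemma \ref{lemma rep} you identify $F\mapsto ((C,-)\otimes F)(Z)$ with $F\mapsto \underrightarrow{\mathrm{lim}}_{i}F(W_i)$, where $\mathrm{hom}(C,Z)=\underrightarrow{\mathrm{lim}}_{i}W_i$ with each $W_i\in\mathcal{C}^{\mathrm{fp}}$, i.e.\ with a filtered colimit of exact evaluation functors, and exactness of filtered colimits in $\mathbf{Ab}$ finishes the job. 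This is shorter, but it does lean on the isomorphism of Lemma \ref{lemma rep} being natural in $F$; the paper's proof of that lemma only checks compatibility for maps between finitely presented functors, so you need the (routine) density argument you flag -- both sides preserve direct limits and agree naturally on $\mathcal{C}^{\mathrm{fp}}\hbox{-}\mathrm{mod}$ -- to cover arbitrary maps. For rigidity the two arguments diverge more sharply: the paper identifies $\mathbf{L}_C\cong (C^{\vee},-)\otimes -$ and concludes that $(C,-)$ is rigid from $(C^{\vee},-)\otimes -$ being both left and right adjoint to $(C,-)\otimes -$, whereas you transport the duality data $(\eta,\epsilon)$ along the strong monoidal Yoneda embedding $(\mathcal{C}^{\mathrm{fp}})^{\mathrm{op}}\to \mathcal{C}^{\mathrm{fp}}\hbox{-}\mathrm{Mod}$. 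Your version is arguably the more watertight of the two here, since biadjointness of tensor functors does not by itself manufacture the triangle identities witnessing a dual pair, while your argument produces the duality data explicitly; what the paper's construction buys in exchange is the stronger standalone fact that $(C,-)\otimes -$ is a right adjoint and hence preserves all limits, not merely finite ones.
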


\begin{proof}
We already know that $(C,-) \otimes -$ is a left adjoint (\cite{D70}, Theorem 3.3 and Theorem 3.6) and therefore right exact. We show that $(C,-) \otimes -$ is also a right adjoint and therefore is an exact functor.

We first define the left adjoint $\mathbf{L}_C:\mathcal{C}^{\mathrm{fp}}\hbox{-}\mathrm{Mod} \to \mathcal{C}^{\mathrm{fp}}\hbox{-}\mathrm{Mod}$ on finitely presented functors. Given $F_f \in \mathcal{C}^{\mathrm{fp}}\hbox{-}\mathrm{mod}$ with presentation $(B,-) \xrightarrow{(f,-)} (A,-) \to F_f \to 0$, denote by $\mathbf{L}_C(F_f)$ the functor with presentation \[ (\mathrm{hom}(C,B),-) \xrightarrow{(\mathrm{hom}(C,f),-)} (\mathrm{hom}(C,A),-) \to \mathbf{L}_C(F_f) \to 0.\] It can be checked that this definition does not depend on the choice of $f$.   

Now, given another finitely presented functor $F_g$ with presentation $(V,-) \xrightarrow{(g,-)} (U,-) \to F_g \to 0$ and a morphism $\alpha:F_f \to F_g$, chose any $\alpha_1:U \to A$ and $\alpha_2:V \to B$ such that the following diagram commutes.

\begin{tikzpicture}
\matrix (m) [matrix of math nodes,row sep=4em,column sep=5em,minimum width=3em]
{ 0 & 0 \\
	F_f & F_g \\
	(A,-) & (U,-) \\
	(B,-) & (V,-) \\};
\path[-stealth]
(m-2-1) edge (m-1-1)
(m-2-2) edge (m-1-2)
(m-2-1) edge node [above] {$\alpha$} (m-2-2)
(m-3-1) edge node [above] {$(\alpha_1,-)$} (m-3-2)
(m-4-1) edge node [above] {$(\alpha_2,-)$} (m-4-2)
(m-3-1) edge node [left] {$\pi_{f}$} (m-2-1)
(m-3-2) edge node [right] {$\pi_{g}$} (m-2-2)
(m-4-1) edge node [left] {$(f,-)$} (m-3-1)
(m-4-2) edge node [right] {$(g,-)$} (m-3-2);
\end{tikzpicture}

Then define the morphism $\mathbf{L}_C(\alpha):\mathbf{L}_C(F_f) \to \mathbf{L}_C(F_g)$ by the unique map making the following diagram commute.

\begin{tikzpicture}
\matrix (m) [matrix of math nodes,row sep=4em,column sep=6em,minimum width=3em]
{ 0 & 0 \\
	\mathbf{L}_C(F_f) & \mathbf{L}_C(F_g) \\
	(\mathrm{hom}(C,A),-) & (\mathrm{hom}(C,U),-) \\
	(\mathrm{hom}(C,B),-) & (\mathrm{hom}(C,V),-) \\};
\path[-stealth]
(m-2-1) edge (m-1-1)
(m-2-2) edge (m-1-2)
(m-2-1) edge node [above] {$\mathbf{L}_C(\alpha)$} (m-2-2)
(m-3-1) edge node [above] {$(\mathrm{hom}(C,\alpha_1),-)$} (m-3-2)
(m-4-1) edge node [above] {$(\mathrm{hom}(C,\alpha_2),-)$} (m-4-2)
(m-3-1) edge node [left] {$\pi_{\mathrm{hom}(C,f)}$} (m-2-1)
(m-3-2) edge node [right] {$\pi_{\mathrm{hom}(C,g)}$} (m-2-2)
(m-4-1) edge node [left] {$(\mathrm{hom}(C,f),-)$} (m-3-1)
(m-4-2) edge node [right] {$(\mathrm{hom}(C,g),-)$} (m-3-2);
\end{tikzpicture}

Note that this does not depend of the choice of $\alpha_1$ and $\alpha_2$. To define $\mathbf{L}_C$ (up to isomorphism) on any $F \in \mathcal{C}^{\mathrm{fp}}\hbox{-}\mathrm{Mod}$, we assert that $\mathbf{L}_C$ commutes with direct limits. 

 It is easy to check that $\mathbf{L}_C:\mathcal{C}^{\mathrm{fp}}\hbox{-}\mathrm{Mod} \to \mathcal{C}^{\mathrm{fp}}\hbox{-}\mathrm{Mod}$ defines (up to isomorphism) a functor, indeed this follows from the fact that $\mathrm{hom}(C,-):\mathcal{C} \to \mathcal{C}$ is a functor. We claim that this functor is left adjoint to $(C,-) \otimes -:\mathcal{C}^{\mathrm{fp}}\hbox{-}\mathrm{Mod} \to \mathcal{C}^{\mathrm{fp}}\hbox{-}\mathrm{Mod}$.

As $(C,-) \otimes -$ and $\mathbf{L}_C$ commute with direct limits, it is enough to define the unit and counit of the adjunction on finitely presented functors. Indeed, any functor $F \in {\mathcal{C}^{\mathrm{fp}}\hbox{-}\mathrm{Mod}}$ can be expressed as a direct limit of finitely presented functors, say $F=\underrightarrow{\mathrm{lim}}_{i \in I} F_i$ where each $F_i \in \mathcal{C}^{\mathrm{fp}}\hbox{-}\mathrm{mod}$. By the universal property of direct limits, the value of the unit, $\boldsymbol{\eta}_F:F \to  ((C,-) \otimes \mathbf{L}_C(F)$, and the counit, $\boldsymbol{\varepsilon}_F:\mathbf{L}_C((C,-) \otimes F) \to F$, at $F$, is uniquely determined by the respective components at the $F_i$.   

The unit, $\boldsymbol{\eta}:\mathrm{Id}_{\mathcal{C}^{\mathrm{fp}}\hbox{-}\mathrm{Mod}} \to ((C,-) \otimes -) \circ \mathbf{L}_C$, is defined on finitely presented functors as follows. For $F_f \in \mathcal{C}^{\mathrm{fp}}\hbox{-}\mathrm{mod}$ we define $\boldsymbol{\eta}_{F_f}:F_f \to (C,-) \otimes \mathbf{L}_C(F_f)$ as the unique map such that the following diagram commutes, where $\varepsilon_C:(C \otimes -) \circ \mathrm{hom}(C,-) \to \mathrm{Id}_{\mathcal{C}}$ is the counit of the adjunction between $C \otimes -$ and $\mathrm{hom}(C,-)$. 

\begin{tikzpicture}
\matrix (m) [matrix of math nodes,row sep=4em,column sep=5em,minimum width=3em]
{ 0 & 0 \\
	F_f & (C,-) \otimes \mathbf{L}_C(F_f) \\
	(A,-) & (C \otimes \mathrm{hom}(C,A),-) \\
	(B,-) & (C \otimes \mathrm{hom}(C,B),-) \\};
\path[-stealth]
(m-2-1) edge (m-1-1)
(m-2-2) edge (m-1-2)
(m-2-1) edge node [above] {$\boldsymbol{\eta}_{F_f}$} (m-2-2)
(m-3-1) edge node [above] {$((\varepsilon_C)_A,-)$} (m-3-2)
(m-4-1) edge node [above] {$((\varepsilon_C)_B,-)$} (m-4-2)
(m-3-1) edge node [left] {$\pi_{f}$} (m-2-1)
(m-3-2) edge node [right] {$(C,-) \otimes \pi_{\mathrm{hom}(C,f)}$} (m-2-2)
(m-4-1) edge node [left] {$(f,-)$} (m-3-1)
(m-4-2) edge node [right] {$(C \otimes \mathrm{hom}(C,f),-)$} (m-3-2);
\end{tikzpicture} 

Similarly we define the counit of the adjunction, $\boldsymbol{\varepsilon}:\mathbf{L}_C \circ (C,-) \otimes - \to \mathrm{Id}_{\mathcal{C}^{\mathrm{fp}}\hbox{-}\mathrm{Mod}}$, as follows. For $F_f \in \mathcal{C}^{\mathrm{fp}}\hbox{-}\mathrm{mod}$ we define $\boldsymbol{\varepsilon}_{F_f}:\mathbf{L}_C((C,-)) \otimes F_f) \to F_f$ as the unique map such that the following diagram commutes, where $\eta_C:\mathrm{Id}_{\mathcal{C}} \to \mathrm{hom}(C,-) \circ (C \otimes -)$ is the unit of the adjunction between $C \otimes -$ and $\mathrm{hom}(C,-)$.

\begin{tikzpicture}
\matrix (m) [matrix of math nodes,row sep=4em,column sep=5em,minimum width=3em]
{ 0 & 0 \\
	\mathbf{L}_C((C,-) \otimes F_f) & F_f \\
	(A,-) & (C \otimes \mathrm{hom}(C,A),-) \\
	(B,-) & (C \otimes \mathrm{hom}(C,B),-) \\};
\path[-stealth]
(m-2-1) edge (m-1-1)
(m-2-2) edge (m-1-2)
(m-2-1) edge node [above] {$\boldsymbol{\varepsilon}_{F_f}$} (m-2-2)
(m-3-1) edge node [above] {$((\eta_C)_A,-)$} (m-3-2)
(m-4-1) edge node [above] {$((\eta_C)_B,-)$} (m-4-2)
(m-3-1) edge node [left] {$\pi_{\mathrm{hom}(C,C \otimes f)}$} (m-2-1)
(m-3-2) edge node [right] {$\pi_{f}$} (m-2-2)
(m-4-1) edge node [left] {$( \mathrm{hom}(C,C \otimes f),-)$} (m-3-1)
(m-4-2) edge node [right] {$(f,-)$} (m-3-2);
\end{tikzpicture}  

It can be seen that $\boldsymbol{\eta}_{F_f}$ and $\boldsymbol{\varepsilon}_{F_f}$ don't depend of the choice of presentation. It remains to check that the triangle identities hold. Again, it is enough to check when evaluating at finitely presented functors, but these follow easily from the triangle identities on the adjunction between $C \otimes -$ and $\mathrm{hom}(C,-)$. 

If, in addition, we assume that $C \in \mathcal{C}^{\mathrm{fp}}$ is rigid then $\mathrm{hom}(C,-)\cong C^{\vee} \otimes -$ and therefore $\mathbf{L}_C \cong (C^{\vee},-) \otimes -$. Thus, $(C^{\vee},-) \otimes -$ is left adjoint to $(C,-) \otimes -$ and $(C,-) \otimes - \cong ((C^{\vee})^{\vee},-) \otimes -$ is left adjoint to $(C^{\vee},-) \otimes -$. Therefore, $(C,-) \otimes -$ is rigid with dual given by $(C^{\vee},-) \otimes -$ as required.
\end{proof}

Next we define an additive symmetric monoidal structure on $\mathrm{fun}(\mathcal{D})$.

\begin{definition} \label{remark strict}
	Suppose $(\mathcal{D},\mathcal{C}, \otimes) \in \mathbb{DEF}^{\otimes}$ and let $\mathsf{S} \subseteq \mathcal{C}^{\mathrm{fp}}\hbox{-}\mathrm{mod}$ be the Serre subcategory corresponding to $\mathcal{D}$. By Theorem \ref{thm}, $\mathsf{S}$ is a Serre tensor-ideal of $\mathcal{C}^{\mathrm{fp}}\hbox{-}\mathrm{mod}$.
	First we define an additive symmetric monoidal structure on $\mathcal{C}^{\mathrm{fp}}\hbox{-}\mathrm{mod}/\mathsf{S}$.
	
By \cite{D73}, if the multiplicative system $\Sigma_{\mathsf{S}}$ of all the morphisms $\alpha$ in $\mathcal{C}^{\mathrm{fp}}\hbox{-}\mathrm{mod}$ such that $\mathrm{ker}(\alpha)$, $\mathrm{coker}(\alpha) \in \mathsf{S}$ is closed under tensoring with objects of $\mathcal{C}^{\mathrm{fp}}\hbox{-}\mathrm{mod}$, then $\mathcal{C}^{\mathrm{fp}}\hbox{-}\mathrm{mod}/\mathsf{S}$ has a monoidal structure such that the localisation functor $q:\mathcal{C}^{\mathrm{fp}}\hbox{-}\mathrm{mod} \to \mathrm{fun}(\mathcal{D})=\mathcal{C}^{\mathrm{fp}}\hbox{-}\mathrm{mod}/\mathsf{S}$ is universal among \textit{monoidal} functors which map the morphisms in $\Sigma_{\mathsf{S}}$ to isomorphisms. 

 By Lemma \ref{(x,-) otimes - exact}, for any $C \in \mathcal{C}^{\mathrm{fp}}$, $(C,-) \otimes -:\mathcal{C}^{\mathrm{fp}}\hbox{-}\mathrm{Mod} \to \mathcal{C}^{\mathrm{fp}}\hbox{-}\mathrm{Mod}$ is exact. Since, $\mathcal{C}^{\mathrm{fp}}\hbox{-}\mathrm{mod}$ is an abelian subcategory of $\mathcal{C}^{\mathrm{fp}}\hbox{-}\mathrm{Mod}$, $(C,-) \otimes -:\mathcal{C}^{\mathrm{fp}}\hbox{-}\mathrm{mod} \to \mathcal{C}^{\mathrm{fp}}\hbox{-}\mathrm{mod}$ is also exact. Consequently, for every $\alpha:F \to G$ in $\mathcal{C}^{\mathrm{fp}}\hbox{-}\mathrm{mod}$, $\mathrm{ker}((C,-) \otimes \alpha)\cong (C,-) \otimes \mathrm{ker}(\alpha)$ and $\mathrm{coker}((C,-) \otimes \alpha)\cong (C,-) \otimes \mathrm{coker}(\alpha)$.  Therefore if $\mathsf{S}$ is a tensor-ideal and $\alpha \in \Sigma_{\mathsf{S}}$ then $\mathrm{ker}((C,-) \otimes \alpha),~\mathrm{coker}((C,-) \otimes \alpha) \in \mathsf{S}$ so $(C,-) \otimes \alpha \in \Sigma_{\mathsf{S}}$. 

 Now consider the morphism $\alpha \otimes Id_{F_g}$. We have the following commutative diagram with exact rows.

\begin{tikzpicture}
\matrix (m) [matrix of math nodes,row sep=5em,column sep=5em,minimum width=2em]
{
	F \otimes (V,-) & F \otimes (U,-) & F \otimes F_g & 0 \\
	G \otimes (V,-) & G \otimes (U,-) & G \otimes F_g & 0 \\};
\path[-stealth]
(m-1-1) edge node [above] {$Id_F \otimes (g,-)$} (m-1-2)
(m-1-2) edge (m-1-3)
(m-1-3) edge (m-1-4)
(m-2-1) edge node [below] {$Id_G \otimes (g,-)$} (m-2-2)
(m-2-2) edge (m-2-3)
(m-2-3) edge (m-2-4)
(m-1-1) edge node [right] {$\alpha \otimes Id_{(V,-)}$} (m-2-1)
(m-1-2) edge node [right] {$\alpha \otimes Id_{(U,-)}$}(m-2-2)
(m-1-3) edge [dashed] node [right] {$\alpha \otimes Id_{F_g}$} (m-2-3);
\end{tikzpicture}

Since for every $D \in \mathcal{D}$, $(\alpha \otimes Id_{(V,-)})_D$ and  $(\alpha \otimes Id_{(U,-)})_D$ are isomorphisms, so is $(\alpha \otimes Id_{F_g})_D$. Hence $\mathrm{ker}(\alpha \otimes Id_{F_g}),~ \mathrm{coker}(\alpha \otimes Id_{F_g}) \in \mathsf{S}$ and $\alpha \otimes Id_{F_g} \in \Sigma_{\mathsf{S}}$.

Applying (\cite{D73}, Corollary 1.4) we get an additive symmetric monoidal structure on $\mathcal{C}^{\mathrm{fp}}\hbox{-}\mathrm{mod}/\mathsf{S}$. We induce a monoidal structure on $\mathrm{fun}(\mathcal{D})=(\mathcal{D},\mathbf{Ab})^{\Pi \to}$ via the equivalence given in (\cite{P11}, Theorem 12.10). 
\end{definition}

Next we show that the monoidal structure on $\mathrm{fun}(\mathcal{D})$ is exact in each variable.

\begin{prop} \label{prop exactness only if}
	Let $\mathcal{C}$ be as in Assumption \ref{Rmk C}. Suppose $\mathcal{D}$ is an fp-hom-closed definable subcategory and induce a monoidal structure on $\mathrm{fun}(\mathcal{D})$ as in Remark \ref{remark strict}.
	
	If $\mathcal{D}$ satisfies the exactness criterion then the monoidal structure on $\mathrm{fun}(\mathcal{D})$ is exact in each variable. 
\end{prop}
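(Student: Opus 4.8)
The plan is to verify exactness one point at a time, after translating the tensor product into a cokernel of evaluation functors, and then to feed the exactness criterion into a snake-lemma argument through the tensor--hom adjunction. Since the induced structure is symmetric it suffices to prove that $-\otimes \bar G$ preserves monomorphisms for every object $\bar G$ of $\mathrm{fun}(\mathcal{D})=\mathcal{C}^{\mathrm{fp}}\hbox{-}\mathrm{mod}/\mathsf{S}$; right exactness is automatic. Writing $q$ for the (exact, essentially surjective) localisation, every $\bar G$ is $q(F_g)$ for some $g\colon U\to V$ in $\mathcal{C}^{\mathrm{fp}}$, and a given monomorphism may be represented by an honest monomorphism $u\colon F_1\hookrightarrow F_2$ in $\mathcal{C}^{\mathrm{fp}}\hbox{-}\mathrm{mod}$ with cokernel $F_3$ (replace $F_1$ by $F_1/\ker u$). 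Since $q$ is exact, $q(F_g\otimes u)$ is monic exactly when $\ker(F_g\otimes u)\in\mathsf{S}$, i.e.\ when $\overrightarrow{\ker(F_g\otimes u)}(X)=0$ for every $X\in\mathcal{D}$. The key structural fact I would use throughout is that for $X\in\mathcal{D}$ the functor $\overrightarrow{(-)}(X)=\mathrm{ev}_X\circ q\colon\mathcal{C}^{\mathrm{fp}}\hbox{-}\mathrm{mod}\to\mathbf{Ab}$ is \emph{exact}, because $X$ corresponds under the anti-equivalence of Theorem \ref{Thm ABEX DEF} to an exact functor on $\mathrm{fun}(\mathcal{D})$.

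First I would record a pointwise formula for the tensor product. Tensoring the presentation $(V,-)\xrightarrow{(g,-)}(U,-)\to F_g\to0$ with $F$, applying $\overrightarrow{(-)}(X)$, and invoking Lemmas \ref{lemma rep} and \ref{lem hom(a,-) work around} gives a natural isomorphism
\[
\overrightarrow{F\otimes F_g}(X)\;\cong\;\mathrm{coker}\Big(\overrightarrow{F}(\mathrm{hom}(V,X))\xrightarrow{\;\overrightarrow{F}(\mathrm{hom}(g,X))\;}\overrightarrow{F}(\mathrm{hom}(U,X))\Big).
\]
As $\mathcal{D}$ is fp-hom-closed, $\mathrm{hom}(U,X)$ and $\mathrm{hom}(V,X)$ lie in $\mathcal{D}$, so the two groups above are the values of the exact functors $\overrightarrow{(-)}(\mathrm{hom}(U,X))$ and $\overrightarrow{(-)}(\mathrm{hom}(V,X))$, and the displayed map is the component $\Phi_F$ of a natural transformation $\Phi$ between them. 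Feeding the short exact sequence $0\to F_1\to F_2\to F_3\to0$ into these two exact functors and applying the snake lemma, the required injectivity of $\overrightarrow{F_1\otimes F_g}(X)\to\overrightarrow{F_2\otimes F_g}(X)$ reduces to surjectivity of $\ker\Phi_{F_2}\to\ker\Phi_{F_3}$.

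This surjectivity is the crux, and it is exactly where the exactness criterion is used. Presenting $F_3=F_{f_3}$ with $f_3\colon A_3\to B_3$, an element of $\ker\Phi_{F_3}$ is a class $[m]$ of some $m\colon A_3\to\mathrm{hom}(V,X)$ for which $\mathrm{hom}(g,X)\circ m$ factors through $f_3$. Passing across the tensor--hom adjunction, the adjoint $h\colon A_3\otimes U\to X$ of $\mathrm{hom}(g,X)\circ m$ is then divisible both by $A_3\otimes g$ (coming from $m$) and by $f_3\otimes U$ (coming from the factorisation through $f_3$); the exactness criterion, applicable since $X\in\mathcal{D}$, yields $\delta\colon B_3\otimes V\to X$ with $h=\delta\circ(f_3\otimes g)$. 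Taking the adjoint $\tilde\delta\colon B_3\to\mathrm{hom}(V,X)$, the element $m':=m-\tilde\delta\circ f_3$ satisfies $\mathrm{hom}(g,X)\circ m'=0$ while still representing $[m]$. Because the representable $(A_3,-)$ is projective in $\mathcal{C}^{\mathrm{fp}}\hbox{-}\mathrm{mod}$, the presentation epimorphism $(A_3,-)\twoheadrightarrow F_3$ lifts through $F_2\twoheadrightarrow F_3$ to some $\ell\colon(A_3,-)\to F_2$; applying $\overrightarrow{(-)}(\mathrm{hom}(V,X))$ to $\ell$ and evaluating at $m'$ produces $\eta:=\overrightarrow{\ell}(m')\in\overrightarrow{F_2}(\mathrm{hom}(V,X))$ which lifts $[m]$, and naturality of $\ell$ together with $\mathrm{hom}(g,X)\circ m'=0$ forces $\Phi_{F_2}(\eta)=0$, so $\eta\in\ker\Phi_{F_2}$ as wanted.

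Putting this together, $\ker\Phi_{F_2}\to\ker\Phi_{F_3}$ is onto for every $X\in\mathcal{D}$, hence $\overrightarrow{F_1\otimes F_g}(X)\to\overrightarrow{F_2\otimes F_g}(X)$ is injective for all $X$, so $\ker(F_g\otimes u)\in\mathsf{S}$ and $q(F_g\otimes u)$ is monic; symmetry then gives exactness in each variable. I expect the main obstacle to be the adjunction bookkeeping of the third paragraph --- correctly matching the two divisibility hypotheses to the inputs of the exactness criterion and checking that $m'$ both represents the original class and is annihilated by $\mathrm{hom}(g,X)$ --- while the snake-lemma reduction and the projective lifting are routine once the exactness of $\overrightarrow{(-)}(X)$ on $\mathcal{D}$ is in hand.
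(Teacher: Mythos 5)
Your argument is correct, and its core is genuinely different from the paper's. Both proofs start with the same unavoidable reductions: represent everything in $\mathcal{C}^{\mathrm{fp}}\hbox{-}\mathrm{mod}$, use that the localisation $q$ is exact and monoidal, and test exactness pointwise at objects of $\mathcal{D}$ via Theorem \ref{Thm fun(D)}; both also use projectivity of representables and exactly one application of the exactness criterion. But where the paper then runs a long element chase in the double diagram obtained by tensoring the presentation $(T,-)\xrightarrow{(k,-)}(S,-)\to F_k\to 0$ of the tensoring object against the short exact sequence --- applying the criterion to the pair consisting of $k$ and an assembled map $(g,\alpha_1)\colon U\to V\oplus A$ built from the presentation of the \emph{middle} term and a projective lift of the monomorphism --- you first pass through Lemmas \ref{lemma rep} and \ref{lem hom(a,-) work around} to obtain the pointwise formula $\overrightarrow{F\otimes F_g}(X)\cong\mathrm{coker}\big(\overrightarrow{F}(\mathrm{hom}(V,X))\to\overrightarrow{F}(\mathrm{hom}(U,X))\big)$, regard it as a natural transformation $\Phi$ between two exact evaluation functors, and invoke the snake lemma to convert the desired injectivity on cokernels into surjectivity on kernels. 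Your application of the criterion then uses the pair $(f_3,g)$ (presentation of the \emph{quotient} term, presentation of the tensoring object), and your correction step $m'=m-\tilde\delta\circ f_3$ plays the role of the paper's direct-sum trick. What your route buys: the snake lemma isolates exactly what must be proved, the internal hom (and hence the fp-hom-closed hypothesis) enters visibly through the cokernel formula rather than implicitly through a chase, and the element manipulation is much shorter. What it costs: you need the isomorphism of Lemma \ref{lemma rep} to be natural not only in the functor variable (which its proof does establish) but also in the representing object, in order to identify $\Phi_F$ with $\overrightarrow{F}(\mathrm{hom}(g,X))$ and the induced maps on cokernels with $\overrightarrow{u\otimes F_g}$ evaluated at $X$; this is routine from the adjunction construction, but the lemma as stated does not assert it, so a final write-up should record and check that naturality explicitly, whereas the paper's chase never leaves the presentation level and needs no such supplement.
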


\begin{proof}
	Suppose $\mathcal{D}$ satisfies the exactness criterion, i.e. for any $f:A \to B$ and $g:U \to V$ in $\mathcal{C}^{\mathrm{fp}}$ and for any $D \in \mathcal{D}$, if $h:A \otimes U \to D$ satisfies $(f \otimes U) | h$ and $(A \otimes g) | h$ then $(f \otimes g) | h$. Suppose further that $0 \to F \to G \to H \to 0$ is an exact sequence in $\mathrm{fun}(\mathcal{D})$. It is (isomorphic to) the image of an exact sequence $0 \to F_f \xrightarrow{\alpha} F_g \xrightarrow{\beta} F_l \to 0$ in  $\mathcal{C}^{\mathrm{fp}}\hbox{-}\mathrm{mod}$ (see \cite{BRB}, Lemma 11.1.6 and Corollary 11.1.42). If $K \in \mathrm{fun}(\mathcal{D})$ then $K$ is isomorphic to the image of $F_k$ for some $F_k \in \mathcal{C}^{\mathrm{fp}}\hbox{-}\mathrm{mod}$. Therefore, since the localisation functor is monoidal, showing $0 \to K \otimes F \xrightarrow{K \otimes \alpha} K \otimes G \xrightarrow{K \otimes \beta} K \otimes H \to 0$ is a short exact sequence in $\mathrm{fun}(\mathcal{D})$ is equivalent to showing that the image of the (not necessarily exact) sequence $0 \to F_k \otimes F_f \to F_k \otimes F_g \to F_k \otimes F_l \to 0$ under the localisation functor gives a short exact sequence. By (\cite{P11}, Theorem 12.10), this is equivalent to showing that $0 \to (\overrightarrow{F_k \otimes F_f})(D) \to (\overrightarrow{F_k \otimes F_g})(D) \to (\overrightarrow{F_k \otimes F_l})(D) \to 0$ is exact for all $D \in \mathcal{D}$. 
	
	Suppose $F_k$ has presentation $(T,-) \xrightarrow{(k,-)} (S,-) \to F_k \to 0$, where $k:S \to T$ is a morphism in $\mathcal{C}^{\mathrm{fp}}$, then we have the following commutative diagram in $\mathcal{C}^{\mathrm{fp}}\hbox{-}\mathrm{mod}$.
	
	\noindent	\begin{tikzpicture}
	\matrix (m) [matrix of math nodes,row sep=4em,column sep=4em,minimum width=3em]
	{  & 0 & 0 & 0 & \\
		& F_k \otimes F_f & F_k \otimes F_g & F_k \otimes F_l & 0 \\
		0 & (S,-) \otimes F_f & (S,-) \otimes F_g & (S,-) \otimes F_l & 0 \\
		0 & (T,-) \otimes F_f & (T,-) \otimes F_g & (T,-) \otimes F_l & 0 \\};
	\path[-stealth]
	(m-2-2) edge (m-1-2)
	(m-2-3) edge (m-1-3)
	(m-2-4) edge (m-1-4)
	(m-2-2) edge node [above] {$F_k \otimes \alpha$} (m-2-3)
	(m-2-3) edge node [above] {$F_k \otimes \beta$} (m-2-4)
	(m-2-4) edge (m-2-5)
	(m-3-2) edge node [left] {$\pi_k \otimes F_f$} (m-2-2)
	(m-3-3) edge (m-2-3)
	(m-3-4) edge (m-2-4)
	(m-3-1) edge (m-3-2)
	(m-3-4) edge (m-3-5)
	(m-3-2) edge node [above] {$(S,-) \otimes \alpha$} (m-3-3)
	(m-3-3) edge node [above] {$(S,-) \otimes \beta$} (m-3-4)
	(m-4-2) edge node [left] {$(k,-) \otimes F_f$} (m-3-2)
	(m-4-3) edge node [left] {$(k,-) \otimes F_g$} (m-3-3)
	(m-4-4) edge node [right] {$(k,-) \otimes F_l$} (m-3-4)
	(m-4-1) edge (m-4-2)
	(m-4-4) edge (m-4-5)
	(m-4-2) edge node [above] {$(T,-) \otimes \alpha$} (m-4-3)
	(m-4-3) edge node [above] {$(T,-) \otimes \beta$} (m-4-4);
	\end{tikzpicture} 
	
	Here the second row is exact since $F_k \otimes -$ is a left adjoint and therefore right exact. The third and fourth rows are exact by Lemma \ref{(x,-) otimes - exact}. We must show that $\overrightarrow{(F_k \otimes \alpha)}_D$ is a monomorphism (or has zero kernel) for all $D \in \mathcal{D}$. Fix $D \in \mathcal{D}$. To enhance readability, for a functor $F \in \mathcal{C}^{\mathrm{fp}}\hbox{-}\mathrm{mod}$ we will suppress the usual notation, $\overrightarrow{F}$, for the unique extension to a functor $\mathcal{C} \to \mathbf{Ab}$ which commutes with direct limits, and simply use $F$. For any $w \in (F_k \otimes F_f)(D)$ there exists some $w' \in ((S,-) \otimes F_f)(D)$ such that $(\pi_k \otimes F_f)_D(w')=w$. If $(F_k \otimes \alpha)_D(w)=0$ then there exists some $z \in ((T,-) \otimes F_g)(D)$ such that $((k,-) \otimes F_g)_D(z)=((S,-) \otimes \alpha)_D(w')$. 
	
	We will show that $w'=((k,-) \otimes F_f)_{D}(x)$ for some $x \in ((T,-) \otimes F_f)(D)$, meaning $w=(\pi_k \otimes F_f)(w')=0$.
	
	Suppose that $F_f$ has presentation $(B,-) \xrightarrow{(f,-)} (A,-) \xrightarrow{\pi_f} F_f \to 0$. Then there exists some morphism $w'':S \otimes A \to D$ such that $((S,-) \otimes \pi_f)_D(w'')=w'$. Similarly, if $g:U \to V$ in $\mathcal{C}^{\mathrm{fp}}$ then there exists some morphism $z':T \otimes U \to D$ such that $((T,-) \otimes \pi_g)_D(z')=z$. Therefore we have \[(((k,-) \otimes F_g) \circ ((T,-) \otimes \pi_g) )_D(z')=(((S,-) \otimes \alpha) \circ ((S,-) \otimes \pi_f))_D(w'').\] Consider the following diagram where $z'\in (T \otimes U, D)$ and $w'' \in (S \otimes A,D)$. 
	
	\noindent	\begin{tikzpicture}
	\matrix (m) [matrix of math nodes,row sep=4em,column sep=5em,minimum width=3em]
	{  (S \otimes B,D) & (S \otimes A,D) & ((S,-) \otimes F_f)(D) & 0 \\
		(S \otimes V, D)	& (S \otimes U,D)  & ((S,-) \otimes F_g)(D) & 0 \\
		(T \otimes V,D)  & (T \otimes U,D)  & ((T,-) \otimes F_g)(D) & 0 \\};
	\path[-stealth]
	(m-1-1) edge node [above] {$(S \otimes f,-)_D$} (m-1-2)
	(m-2-1) edge node [above] {$(S \otimes g,-)_D$} (m-2-2)
	(m-1-2) edge node [above] {$((S,-) \otimes \pi_{f})_D$} (m-1-3)
	(m-2-2) edge node [above] {$((S,-) \otimes \pi_{g})_D$} (m-2-3)
	(m-1-3) edge (m-1-4)
	(m-1-3) edge node [left] {$((S,-) \otimes \alpha)_D$} (m-2-3)
	(m-1-2) edge node [left] {$(S \otimes \alpha_1,-)_D$} (m-2-2)
	(m-3-3) edge node [left] {$((k,-) \otimes F_g)_D$} (m-2-3)
	(m-3-2) edge node [left] {$(k \otimes U,-)_D$} (m-2-2)
	(m-3-1) edge node [above] {$(T \otimes g,-)_D$} (m-3-2)
	(m-3-2) edge node [above] {$((T,-) \otimes\pi_{ g})_D$} (m-3-3)
	(m-3-3) edge (m-3-4)
	(m-2-3) edge (m-2-4);
	\end{tikzpicture}
	
	By projectivity of representables, there exists some $\alpha_1:U \to A$ such that $\alpha \circ \pi_f=\pi_g \circ (\alpha_1,-)$. Therefore, $((S,-) \otimes \alpha) \circ ((S,-) \otimes \pi_f)=((S-) \otimes \pi_g) \circ (S \otimes \alpha_1,-)$. This gives us \[(((S,-) \otimes \pi_g) \circ (k \otimes U,-) )_D(z')=(((S,-) \otimes \pi_g) \circ (S \otimes \alpha_1,-))_D(w'').\] That is $((S,-) \otimes \pi_g)_D(z' \circ (k \otimes U)-w'' \circ (S \otimes \alpha_1))=0$ and so there exists some $y:S \otimes V \to D$ such that $y \circ (S \otimes g)=z' \circ (k \otimes U)-w'' \circ (S \otimes \alpha_1)$ equivalently $h=y \circ (S \otimes g)+w'' \circ (S \otimes \alpha_1)=z' \circ (k \otimes U)$. 
	
	Consider the morphism $(g,\alpha_1):U \to V \oplus A$ such that $p_1 \circ (g,\alpha_1)=g$ and $p_2 \circ (g, \alpha_1)=\alpha_1$, where $p_1$ and $p_2$ denote the projection maps. Then $(S \otimes (g,\alpha_1)) | h$ and $(k \otimes U) | h$ so since the exactness criterion holds for $\mathcal{D}$, $(k \otimes (g,\alpha_1)) | h$ and there exists some $y':T \otimes (V \oplus A) \to D$ such that $y' \circ (k \otimes (g,\alpha_1))=h$. Set $y_1=y' \circ (T \otimes i_1)$ and $y_2=y' \circ (T \otimes i_2)$, where $i_1:V \to V \oplus A$ and $i_2:A \to V \oplus A$ are the inclusion maps. Then we have \[ y_1 \circ (k \otimes g) + y_2 \circ (k \otimes \alpha_1)=h=z' \circ (k \otimes U)=w'' \circ (S \otimes \alpha_1) + y \circ (S \otimes g).\]
	
	Applying $((S,-) \otimes \pi_g)_D$ we get, \[ ((S,-) \otimes \pi_g)_D(y_2 \circ (k \otimes \alpha_1))=((S,-) \otimes \pi_g)_D(h)=((S,-) \otimes \pi_g)_D(w'' \circ (S \otimes \alpha_1)).\]
	
	As shown on the commutative diagram above, we have \[((S,-) \otimes \pi_g) \circ ((S,-) \otimes (\alpha_1,-))=((S,-) \otimes \alpha) \circ ((S,-) \otimes \pi_f).\] Therefore, as \[ ((S,-) \otimes \pi_g)_D(y_2 \circ (k \otimes \alpha_1))=[((S,-) \otimes \pi_g) \circ ((S,-) \otimes (\alpha_1,-))]_D(y_2 \circ (k \otimes A))\] we have \[((S,-) \otimes \pi_g)_D(y_2 \circ (k \otimes \alpha_1))=[((S,-) \otimes \alpha) \circ ((S,-) \otimes \pi_f)]_D (y_2 \circ (k \otimes A)). \]
	
	Similarly, \[ ((S,-) \otimes \pi_g)_D(w'' \circ (S \otimes \alpha_1))=[((S,-) \otimes \pi_g) \circ ((S,-) \otimes (\alpha_1,-))]_{\mathcal{D}}(w''),\] so we have \[((S,-) \otimes \pi_g)_D(w'' \circ (S \otimes \alpha_1)) =[((S,-) \otimes \alpha) \circ ((S,-) \otimes \pi_f) ]_D(w'').\] 
	
	Therefore, \[[((S,-) \otimes \alpha) \circ ((S,-) \otimes \pi_f)]_D (y_2 \circ (k \otimes A))=[((S,-) \otimes \alpha) \circ ((S,-) \otimes \pi_f) ]_D(w'')\] and since $(S,-) \otimes \alpha$ is a monomorphism we have \[((S,-) \otimes \pi_f)_D (y_2 \circ (k \otimes A))=((S,-) \otimes \pi_f)_D(w'')=w'.\] 
	
	Finally note that \[((S,-) \otimes \pi_f)_D (y_2 \circ (k \otimes A))=[((S,-) \otimes \pi_f)\circ ((k,-) \otimes (A,-))]_D (y_2)\] and therefore, as \[((S,-) \otimes \pi_f)\circ ((k,-) \otimes (A,-))=((k,-) \otimes (F_f,-)) \circ ( ((T,-) \otimes \pi_f)\] we get that $w'=((k,-) \otimes (F_f,-))_{\mathcal{D}}(( ((T,-) \otimes \pi_f)_{\mathcal{D}}(y_2))$.  But then $w=(\pi_k \otimes F)_D(w')=(\pi_k \otimes F)_D \circ ((k,-) \otimes F_f)_D \circ ((T,-) \otimes \pi_f)_D(y_2)=0$. Therefore, $(F_k \otimes \alpha)_D$ has zero kernel and is a monomorphism for all $D \in \mathcal{D}$, as required. 
\end{proof}

In fact, the converse is also true.

\begin{prop} \label{Lem exactness if}
	Let $\mathcal{C}$ be as in Assumption \ref{Rmk C}. Suppose $\mathcal{D}$ is an fp-hom-closed definable subcategory and induce a monoidal structure on $\mathrm{fun}(\mathcal{D})$ as in Definition \ref{remark strict}.
	
	If the monoidal structure on $\mathrm{fun}(\mathcal{D})$ is exact in each variable then $\mathcal{D}$ satisfies the exactness criterion. 
\end{prop}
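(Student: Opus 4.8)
The plan is to reduce the exactness criterion to a single equality of subobjects in $\mathrm{fun}(\mathcal{D})$ and then to prove that equality using that exactness of $\otimes$ makes every object of $\mathrm{fun}(\mathcal{D})$ flat. Fix $f:A\to B$ and $g:U\to V$ in $\mathcal{C}^{\mathrm{fp}}$ and $X\in\mathcal{D}$, and set $P=\mathrm{im}\big((f\otimes U,X):(B\otimes U,X)\to(A\otimes U,X)\big)$, $Q=\mathrm{im}\big((A\otimes g,X)\big)$ and $R=\mathrm{im}\big((f\otimes g,X)\big)$, all subgroups of $(A\otimes U,X)$. Since $h\mid k$ means that $k$ factors through $h$, the hypotheses and conclusion of the exactness criterion read $h\in P$, $h\in Q$ and $h\in R$, so the criterion is exactly the inclusion $P\cap Q\subseteq R$ (the reverse inclusion always holds because $f\otimes g$ factors through both $f\otimes U$ and $A\otimes g$). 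Let $q=(\overrightarrow{-})|_{\mathcal{D}}:\mathcal{C}^{\mathrm{fp}}\hbox{-}\mathrm{mod}\to\mathrm{fun}(\mathcal{D})$ be the localisation functor (Theorem \ref{Thm fun(D)}) and write $\bar M=q(M)$. Because $q$ and the evaluation functors at objects of $\mathcal{D}$ are exact --- a sequence in $\mathrm{fun}(\mathcal{D})$ is exact if and only if it is exact at every $D\in\mathcal{D}$, as used in Proposition \ref{prop exactness only if} --- the subgroups $P$, $Q$, $R$ and $P\cap Q$ are the values at $X$ of the subobjects $\overline{\mathrm{im}(f\otimes U,-)}$, $\overline{\mathrm{im}(A\otimes g,-)}$, $\overline{\mathrm{im}(f\otimes g,-)}$ and $\bar W:=\overline{\mathrm{im}(f\otimes U,-)}\cap\overline{\mathrm{im}(A\otimes g,-)}$ of $\overline{(A\otimes U,-)}$. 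It therefore suffices to prove the single equality $\bar W=\overline{\mathrm{im}(f\otimes g,-)}$ of subobjects of $\overline{(A\otimes U,-)}$, since evaluating at each $X\in\mathcal{D}$ then gives $P\cap Q=R$.

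Write $I_f=\mathrm{im}\big((f,-):(B,-)\to(A,-)\big)$ and $I_g=\mathrm{im}\big((g,-):(V,-)\to(U,-)\big)$, so there are short exact sequences $E_f:0\to I_f\to(A,-)\to F_f\to0$ and $E_g:0\to I_g\to(U,-)\to F_g\to0$ in $\mathcal{C}^{\mathrm{fp}}\hbox{-}\mathrm{mod}$. Exactness of the monoidal structure makes every object of $\mathrm{fun}(\mathcal{D})$ flat, so $\bar\iota_f\otimes\bar\iota_g:\bar I_f\otimes\bar I_g\to\overline{(A,-)}\otimes\overline{(U,-)}=\overline{(A\otimes U,-)}$ is a monomorphism. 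I would identify its image twice. First, the corestrictions $(B,-)\twoheadrightarrow I_f$ and $(V,-)\twoheadrightarrow I_g$ of $(f,-)$ and $(g,-)$ give, by right exactness of $\otimes$, an epimorphism $\overline{(B\otimes V,-)}\twoheadrightarrow\bar I_f\otimes\bar I_g$; composing with $\bar\iota_f\otimes\bar\iota_g$ recovers $q\big((f,-)\otimes(g,-)\big)=q\big((f\otimes g,-)\big)$ by bifunctoriality of Day convolution and the isomorphism $(A,-)\otimes(U,-)\cong(A\otimes U,-)$, so $\mathrm{im}(\bar\iota_f\otimes\bar\iota_g)=\overline{\mathrm{im}(f\otimes g,-)}$. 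Second, factoring $\bar\iota_f\otimes\bar\iota_g$ through $\overline{(A,-)}\otimes\bar I_g=\overline{(A,-)\otimes I_g}=\overline{\mathrm{im}(A\otimes g,-)}$ (using that $(A,-)\otimes-$ is exact, Lemma \ref{(x,-) otimes - exact}) and symmetrically through $\overline{\mathrm{im}(f\otimes U,-)}$ shows $\mathrm{im}(\bar\iota_f\otimes\bar\iota_g)\subseteq\bar W$.

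The crux is the reverse inclusion $\bar W\subseteq\mathrm{im}(\bar\iota_f\otimes\bar\iota_g)$, and this is exactly where flatness is used. Tensoring $E_g$ with $\bar I_f$ gives a short exact sequence $0\to\bar I_f\otimes\bar I_g\to\bar I_f\otimes\overline{(U,-)}\to\bar I_f\otimes\bar F_g\to0$, identifying $\bar I_f\otimes\bar I_g$ with the kernel of $\overline{\mathrm{im}(f\otimes U,-)}=\bar I_f\otimes\overline{(U,-)}\to\bar I_f\otimes\bar F_g$; tensoring $E_f$ with $\bar F_g$ shows that $\bar I_f\otimes\bar F_g\hookrightarrow\overline{(A,-)}\otimes\bar F_g=\bar F_{A\otimes g}$ is a monomorphism. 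Hence for $\xi\in\overline{\mathrm{im}(f\otimes U,-)}$ one has $\xi\in\bar I_f\otimes\bar I_g$ if and only if $\xi$ dies in $\bar F_{A\otimes g}$, i.e. if and only if $\xi\in\ker\big(\overline{(A\otimes U,-)}\to\bar F_{A\otimes g}\big)=\overline{\mathrm{im}(A\otimes g,-)}$; since $\bar I_f\otimes\bar I_g\subseteq\overline{\mathrm{im}(f\otimes U,-)}$ this is precisely the statement $\mathrm{im}(\bar\iota_f\otimes\bar\iota_g)=\bar W$.

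Combining the two identifications gives $\bar W=\overline{\mathrm{im}(f\otimes g,-)}$, and evaluation at the objects of $\mathcal{D}$ then yields $P\cap Q=R$, which is the exactness criterion. The only genuinely delicate point is the last nine-lemma-style chase in the $3\times3$ diagram obtained by tensoring $E_f$ with $E_g$, where flatness is essential to keep all the relevant maps monomorphisms; the remaining steps are bookkeeping with the presentations of Notation \ref{notation Ff}, the formula $(C,-)\otimes F_f=F_{C\otimes f}$, and the exactness of $q$ and of evaluation at objects of $\mathcal{D}$.
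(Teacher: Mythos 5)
Your proof is correct, but it takes a genuinely different route from the paper's. The paper's own argument chooses a pseudocokernel $g':V\to W$ of $g$ in $\mathcal{C}^{\mathrm{fp}}$ (citing \cite{P11}, Corollary 3.11), applies exactness of $(F_f)_{\mathsf{S}}\otimes -$ to the exact sequence $(W,-)\to(V,-)\to(U,-)$, and then runs an explicit element chase at each $D\in\mathcal{D}$ to construct, by hand, the morphism witnessing $(f\otimes g)\mid h$. You instead recast the criterion as the inclusion of images $\mathrm{im}\big((f\otimes U,X)\big)\cap\mathrm{im}\big((A\otimes g,X)\big)\subseteq\mathrm{im}\big((f\otimes g,X)\big)$, lift it to an equality of subobjects of $\overline{(A\otimes U,-)}$ in $\mathrm{fun}(\mathcal{D})$, and prove that equality by identifying both sides with the image of $\bar\iota_f\otimes\bar\iota_g\colon \bar I_f\otimes\bar I_g\to\overline{(A\otimes U,-)}$, where exactness of the tensor product (flatness of every object) is what makes the tensored image factorisations of $(f,-)$ and $(g,-)$ stay monic, and the kernel identification $\bar I_f\otimes\bar I_g=\ker\big(\overline{\mathrm{im}(f\otimes U,-)}\to\bar F_{A\otimes g}\big)$ is what delivers the hard inclusion. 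Both arguments rest on the same pillars --- exactness and monoidality of the localisation functor $q$, pointwise exactness in $\mathrm{fun}(\mathcal{D})$, and exactness of $(C,-)\otimes-$ from Lemma \ref{(x,-) otimes - exact} --- but yours is element-free, avoids pseudocokernels altogether, and establishes the stronger intersection formula $\overline{\mathrm{im}(f\otimes U,-)}\cap\overline{\mathrm{im}(A\otimes g,-)}=\overline{\mathrm{im}(f\otimes g,-)}$, at the cost of more bookkeeping with the coherence isomorphisms of $q$ when matching subobjects across the localisation; the paper's chase is longer to write out but needs the exactness hypothesis only once, applied to a single sequence of representables, and produces the factoring morphism explicitly.
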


\begin{proof}
	Suppose that the induced monoidal structure on $\mathrm{fun}(\mathcal{D})$ is exact in each variable. Suppose $f:A \to B$ and $g:U \to V$ are morphisms in $\mathcal{C}^{\mathrm{fp}}$. By (\cite{P11}, Corollary 3.11), $\mathcal{C}^{\mathrm{fp}}$ has pseudocokernels. Let $g':V \to W$ be a pseudocokernel of $g$. 
	
	Then $(W,-) \xrightarrow{(g',-)} (V,-) \xrightarrow{(g,-)} (U,-)$ is exact in $\mathcal{C}^{\mathrm{fp}}\hbox{-}\mathrm{mod}$, that is $\mathrm{im}((g',-))=\mathrm{ker}((g,-))$. Therefore, its image $(W,-)_{\mathsf{S}} \xrightarrow{(g',-)_{\mathsf{S}}} (V,-)_{\mathsf{S}} \xrightarrow{(g,-)_{\mathsf{S}}} (U,-)_{\mathsf{S}}$ is exact in $\mathrm{fun}(\mathcal{D})$ and by assumption, $(F_f)_{\mathsf{S}} \otimes (W,-)_{\mathsf{S}} \xrightarrow{(F_f)_{\mathsf{S}} \otimes (g',-)_{\mathsf{S}}} (F_f)_{\mathsf{S}} \otimes (V,-)_{\mathsf{S}} \xrightarrow{(F_f)_{\mathsf{S}} \otimes (g,-)_\mathsf{S}} (F_f)_{\mathsf{S}} \otimes (U,-)_{\mathsf{S}}$ is also exact in $\mathrm{fun}(\mathcal{D})$. As the localisation functor is monoidal, this is equivalent to, $(F_f \otimes (W,-))(D) \xrightarrow{(F_f \otimes g',-))_D} (F_f \otimes (V,-))(D) \xrightarrow{(F_f \otimes (g,-))_D} (F_f \otimes (U,-))(D)$ being exact in $\mathbf{Ab}$ for all $D \in \mathcal{D}$, by (\cite{P11}, Theorem 12.10). Consider the diagram below.   
	
	\begin{tikzpicture}
	\matrix (m) [matrix of math nodes,row sep=4em,column sep=5em,minimum width=3em]
	{   0 & 0 & 0  \\
		F_f \otimes (W,-) & F_f \otimes (V,-) & F_f \otimes (U,-) \\
		(A \otimes W,-) & (A \otimes V,-) & (A \otimes U,-) \\
		(B \otimes W,-) & (B \otimes V,-) & (B \otimes U,-) \\};
	\path[-stealth]
	(m-2-1) edge (m-1-1)
	(m-2-2) edge (m-1-2)
	(m-2-3) edge (m-1-3)
	(m-2-1) edge node [above] {$F_f \otimes (g',-)$} (m-2-2)
	(m-2-2) edge node [above] {$F_f \otimes (g,-)$} (m-2-3)
	(m-3-1) edge node [left] {$\pi_f \otimes (W,-)$} (m-2-1)
	(m-3-2) edge node [left] {$\pi_f \otimes (V,-)$} (m-2-2)
	(m-3-3) edge  node [right] {$\pi_f \otimes (U,-)$} (m-2-3)
	(m-3-1) edge node [above] {$(A \otimes g',-) $} (m-3-2)
	(m-3-2) edge node [above] {$(A \otimes g,-)$} (m-3-3)
	(m-4-1) edge node [left] {$(f \otimes W,-)$} (m-3-1)
	(m-4-2) edge node [left] {$(f \otimes V,-)$} (m-3-2)
	(m-4-3) edge node [right] {$(f \otimes U,-)$} (m-3-3)
	(m-4-1) edge node [above] {$(B \otimes g',-)$} (m-4-2)
	(m-4-2) edge node [above] {$(B \otimes g,-)$} (m-4-3);
	\end{tikzpicture} 
	
	Given any $h:A \otimes U \to D$ such that $(f \otimes U) | h$ and $(A \otimes g) | h$ there exists $h_1:B \otimes U \to D$ such that $h=h_1 \circ (f \otimes U)$ and $h_2:A \otimes V \to D$ such that $h=h_2 \circ (A \otimes g)$. But this means that \begin{align*}((F_f \otimes (g,-)) \circ (\pi_f \otimes (V,-)))_D(h_2)&=(\pi_f \otimes (U,-))_D(h_2 \circ (A \otimes g))
	\\ &=(\pi_f \otimes (U,-))_D(h_1 \circ (f \otimes U))
	\\ &=0.\end{align*} So $(\pi_f \otimes (V,-))_D(h_2)$ is in the kernel of $(F_f \otimes (g,-))_D$ which is equal to the image of $(F_f \otimes (k,-))_D$.  Therefore there exists some $z \in (F_f \otimes (W,-))(D)$ such that $(F_f \otimes (k,-))_D(z)=(\pi_f \otimes (V,-))_D(h_2)$. But then, since $(\pi_f \otimes (W,-))_D$ is an epimorphism in $\mathbf{Ab}$, there exists some morphism $z':A \otimes W \to D$ such that $(\pi_f \otimes (W,-))_D(z')=z$. 
	
	Next, notice that $(\pi_f \otimes (V,-))_D(h_2-(z' \circ (A \otimes k)))=0$. So there exists some $y:B \otimes V \to D$ such that $y \circ (f \otimes V)=h_2-(z' \circ (A \otimes k))$. But then $y \circ (f \otimes g) =y \circ (f \otimes V) \circ (A \otimes g)=h_2 \circ (A \otimes g)=h$. That is $(f \otimes g) |h$, as required.
\end{proof}

\begin{prop}
	There exists a 2-functor $\theta:\mathbb{DEF}^{\otimes} \to \mathbb{ABEX}^{\otimes}$ which maps $(\mathcal{D},\mathcal{C},\otimes) \in \mathbb{DEF}^{\otimes}$ to $\mathrm{fun}(\mathcal{D})$ with the monoidal structure given in Definition \ref{remark strict}.
\end{prop}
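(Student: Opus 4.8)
The plan is to realise $\theta$ as a lift of the relevant direction of the original anti-equivalence (Theorem \ref{Thm ABEX DEF}) along the two forgetful 2-functors $\mathscr{F}:\mathbb{DEF}^{\otimes} \to \mathbb{DEF}$ and $\mathscr{F}:\mathbb{ABEX}^{\otimes} \to \mathbb{ABEX}$, so that essentially all of the 2-functoriality is inherited and only compatibility with the extra monoidal data must be verified. Concretely, on objects $\theta$ sends $(\mathcal{D},\mathcal{C},\otimes)$ to $\mathrm{fun}(\mathcal{D})$ equipped with the monoidal structure of Definition \ref{remark strict}; on a 1-morphism $I:\mathcal{D} \to \mathcal{D}'$ it sends $I$ to the induced functor $I_0:\mathrm{fun}(\mathcal{D}') \to \mathrm{fun}(\mathcal{D})$ furnished by Theorem \ref{Thm ABEX DEF}; and on 2-morphisms it acts exactly as in the underlying anti-equivalence. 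Since the underlying assignment on $\mathscr{F}(\mathbb{DEF}^{\otimes})$ is literally the 2-functor of Theorem \ref{Thm ABEX DEF}, the substance of the proof is confined to checking that both assignments land in the monoidal 2-categories, and that the monoidal structures are respected.

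First I would verify well-definedness on objects, i.e. that $\mathrm{fun}(\mathcal{D})$ with the structure of Definition \ref{remark strict} is genuinely an object of $\mathbb{ABEX}^{\otimes}$. That $\mathrm{fun}(\mathcal{D})$ is skeletally small abelian is part of the original theory (Theorems \ref{Thm ABEX DEF} and \ref{Thm fun(D)}). Definition \ref{remark strict} equips it with an additive symmetric monoidal structure, using that the associated Serre subcategory is a Serre tensor-ideal (Theorem \ref{thm}), which applies because any object of $\mathbb{DEF}^{\otimes}$ is by hypothesis fp-hom-closed. It then remains only to record that this monoidal structure is exact in each variable, which is precisely the content of Proposition \ref{prop exactness only if}, applicable since every object of $\mathbb{DEF}^{\otimes}$ satisfies the exactness criterion. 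Next, for a 1-morphism $I:\mathcal{D}\to\mathcal{D}'$ of $\mathbb{DEF}^{\otimes}$, the original anti-equivalence already guarantees that $I_0$ is an additive exact functor, hence a morphism of $\mathbb{ABEX}$, and $I_0$ is monoidal exactly by the defining condition on morphisms of $\mathbb{DEF}^{\otimes}$; thus $I_0$ is an additive exact monoidal functor, i.e. a morphism of $\mathbb{ABEX}^{\otimes}$.

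Finally I would address 2-functoriality. Because $\theta$ lies over the 2-functor of Theorem \ref{Thm ABEX DEF} via the forgetful 2-functors $\mathscr{F}$, preservation of identities, of the (contravariant) composition $(J\circ I)_0 = I_0 \circ J_0$ of 1-morphisms, and of the vertical and horizontal composition of 2-morphisms is inherited directly; the only genuinely new points are that the identity functor is monoidal and that a composite of monoidal functors is again monoidal, both standard. I expect the main obstacle to be organisational rather than mathematical: one must confirm that the monoidal comparison data attached to a composite $I_0 \circ J_0$ coincide with those obtained by composing the individual comparison isomorphisms, so that $\theta$ genuinely preserves composition at the level of monoidal functors. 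This reduces to the coherence of monoidal functors and introduces no content beyond the results already established, so the proof is in effect an assembly of Theorem \ref{thm}, Definition \ref{remark strict} and Proposition \ref{prop exactness only if} over the skeleton provided by Theorem \ref{Thm ABEX DEF}.
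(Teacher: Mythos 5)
Your proposal is correct and follows essentially the same route as the paper: well-definedness on objects via Theorem \ref{thm} together with Proposition \ref{prop exactness only if}, the assignment $I \mapsto I_0$ from the original anti-equivalence with monoidality holding by the very definition of morphisms in $\mathbb{DEF}^{\otimes}$, and 2-functoriality inherited from \cite{PR10}. The only difference is that you spell out the lifting-along-forgetful-2-functors picture and the coherence of monoidal comparison data for composites, which the paper compresses into a single closing sentence.
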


\begin{proof}
	$\theta$ is well defined on objects by Theorem \ref{thm} and Proposition \ref{prop exactness only if}. On morphisms $\theta$ maps $I:\mathcal{D} \to \mathcal{D'}$ to $I_0:\mathrm{fun}(\mathcal{D'}) \to \mathrm{fun}(\mathcal{D})$ as in the original anti-equivalence in \cite{PR10}. $I_0$ is monoidal by definition of the morphisms $I$ in $\mathbb{DEF}^{\otimes}$. On natural transformations $\theta$ acts as in the original anti-equivalence and therefore $\theta$ satisfies the necessary axioms to be a 2-functor.
\end{proof}

\subsection{The 2-functor $\xi:\mathbb{ABEX}^{\otimes} \to \mathbb{DEF}^{\otimes}$}

Next we define a 2-functor $\xi:\mathbb{ABEX}^{\otimes} \to \mathbb{DEF}^{\otimes}$ which maps a skeletally small abelian category $\mathscr{A}$ with an additive symmetric monoidal structure to $(\mathrm{Ex}(\mathscr{A},\mathbf{Ab}), \mathscr{A}\hbox{-}\mathrm{Mod},\otimes)$, where $\otimes$ is induced by Day convolution product.

 First we show that $(\mathrm{Ex}(\mathscr{A},\mathbf{Ab}), \mathscr{A}\hbox{-}\mathrm{Mod},\otimes)$ is a well-defined object of $\mathbb{DEF}^{\otimes}$ (see Theorem \ref{TFAE SE tensor ideal}).

\begin{lemma} \label{lem monoidal equivalence}
	Let $\mathscr{A}$ be an additive symmetric monoidal, skeletally small abelian category. Suppose that for every exact functor $E:\mathscr{A} \to \mathbf{Ab}$, every $X \in \mathscr{A}$ and every short exact sequence $0 \to A \to B \to C \to 0$ in $\mathscr{A}$, \[ 0 \to E(X \otimes A) \to E(X \otimes B) \to E(X \otimes C) \to 0, \] \noindent is exact in $\mathbf{Ab}$. Suppose further that $\mathrm{Ex}(\mathscr{A},\mathbf{Ab})$ is fp-hom-closed and induce a monoidal structure on $\mathrm{fun}(\mathrm{Ex}(\mathscr{A},\mathbf{Ab}))$ as in Definition \ref{remark strict}. In addition, assume that the equivalence of categories $\mathscr{A} \simeq \mathrm{fun}(\mathrm{Ex}(\mathscr{A},\mathbf{Ab}))$ given by $A \mapsto \mathrm{ev}_A$, where $\mathrm{ev}_A:\mathrm{Ex}(\mathscr{A},\mathbf{Ab}) \to \mathbf{Ab}$ is given by `evaluation at $A$' (see \cite{PR10}), is monoidal.
	
	Then the monoidal structure on $\mathscr{A}$ is exact in each variable.
\end{lemma}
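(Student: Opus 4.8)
The plan is to transport the desired exactness across the monoidal equivalence $\Phi:\mathscr{A}\xrightarrow{\sim}\mathrm{fun}(\mathrm{Ex}(\mathscr{A},\mathbf{Ab}))$, $A\mapsto\mathrm{ev}_A$, recognising the standing hypothesis (exactness of $E(X\otimes-)$ on short exact sequences) as pointwise exactness in the target. Write $\mathcal{D}=\mathrm{Ex}(\mathscr{A},\mathbf{Ab})$. I would fix $X\in\mathscr{A}$ and a short exact sequence $0\to A\to B\to C\to 0$ in $\mathscr{A}$; since the monoidal structure is symmetric, it suffices to prove that $0\to X\otimes A\to X\otimes B\to X\otimes C\to 0$ is exact in $\mathscr{A}$. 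As $\Phi$ is an equivalence of abelian categories it is exact and reflects exactness, so it is enough to show that the image sequence $0\to\mathrm{ev}_{X\otimes A}\to\mathrm{ev}_{X\otimes B}\to\mathrm{ev}_{X\otimes C}\to 0$ is exact in $\mathrm{fun}(\mathcal{D})$.

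The key point is that $\mathrm{fun}(\mathcal{D})$ detects exactness pointwise. Concretely, $\mathrm{fun}(\mathcal{D})=(\mathcal{D},\mathbf{Ab})^{\Pi\to}$ is a full abelian subcategory of $(\mathcal{D},\mathbf{Ab})$ whose kernels and cokernels are computed objectwise: the objectwise kernel or cokernel of a map between product- and direct-limit-preserving functors again preserves products and direct limits, because both products and filtered colimits are exact in $\mathbf{Ab}$. Hence a sequence in $\mathrm{fun}(\mathcal{D})$ is exact if and only if its evaluation at every $E\in\mathcal{D}$ is exact in $\mathbf{Ab}$ (this is the objectwise criterion already used, via \cite{P11}, Theorem 12.10, in the proof of Proposition \ref{prop exactness only if}). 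Since $\mathrm{ev}_{X\otimes A}(E)=E(X\otimes A)$, and likewise for $B$ and $C$, the image sequence is exact in $\mathrm{fun}(\mathcal{D})$ precisely when $0\to E(X\otimes A)\to E(X\otimes B)\to E(X\otimes C)\to 0$ is exact for every exact functor $E:\mathscr{A}\to\mathbf{Ab}$. This is exactly the hypothesis, so the image sequence is exact and therefore so is $0\to X\otimes A\to X\otimes B\to X\otimes C\to 0$, giving exactness of $X\otimes-$ and, by symmetry, of $-\otimes X$.

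The remaining hypotheses make this identification structural. Fp-hom-closedness of $\mathcal{D}$ is what lets us equip $\mathrm{fun}(\mathcal{D})$ with the monoidal structure of Definition \ref{remark strict}, and monoidality of $\Phi$ supplies natural isomorphisms $\mathrm{ev}_{X\otimes A}\cong\mathrm{ev}_X\otimes\mathrm{ev}_A$ (and similarly for $B,C$); under these the image sequence is $\mathrm{ev}_X\otimes(-)$ applied to the $\Phi$-image of the original short exact sequence, so the computation above reads as the statement that $\mathrm{ev}_X\otimes(-)$ is exact on $\mathrm{fun}(\mathcal{D})$, which then transfers back to exactness of $X\otimes(-)$ on $\mathscr{A}$. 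The one step demanding genuine care — and the main thing to pin down rigorously — is the objectwise detection of exactness in $\mathrm{fun}(\mathcal{D})$ together with the compatibility $\mathrm{ev}_A(E)=E(A)$; once these are in place the lemma follows immediately from the hypothesis with no further computation, so I expect the bulk of the write-up to be justifying that reduction rather than any explicit diagram chase.
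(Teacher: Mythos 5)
Your proposal is correct; there is no gap in it. It rests on the same two facts as the paper's own proof, namely objectwise detection of exactness in $\mathrm{fun}(\mathcal{D})$ (where $\mathcal{D}=\mathrm{Ex}(\mathscr{A},\mathbf{Ab})$) and the compatibility $\mathrm{ev}_A(E)=E(A)$, but it routes them differently and more economically. The paper works on the functor-category side: it first uses monoidality of the equivalence $A\mapsto\mathrm{ev}_A$ to transport the displayed hypothesis from $\mathscr{A}$ over to $\mathscr{B}=\mathrm{fun}(\mathcal{D})$, then specialises the transported hypothesis to the exact functors $E=\mathrm{ev}_D$ with $D\in\mathcal{D}$, concludes by pointwise exactness that the monoidal structure of Definition \ref{remark strict} on $\mathrm{fun}(\mathcal{D})$ is exact in each variable, and finally transports that conclusion back across the monoidal equivalence. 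You instead test the single complex $0\to X\otimes A\to X\otimes B\to X\otimes C\to 0$ in $\mathscr{A}$ directly: its image under the equivalence, evaluated at any $E\in\mathcal{D}$, is literally the hypothesis sequence, and reflection of exactness by an equivalence finishes the argument. The notable consequence, which you point out yourself, is that your core argument never invokes monoidality of the equivalence, fp-hom-closedness, or the induced tensor on $\mathrm{fun}(\mathcal{D})$ at all, so you in fact derive the conclusion from the first hypothesis alone --- a formally stronger statement than the lemma. What the paper's two-way transfer buys in exchange is the intermediate assertion that the Day-induced tensor on $\mathrm{fun}(\mathcal{D})$ is exact in each variable, which is the form in which the lemma interacts with the surrounding results (for instance with Proposition \ref{Lem exactness if} when verifying that $\xi$ is well defined). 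Your justification of the pointwise-exactness criterion --- closure of $(\mathcal{D},\mathbf{Ab})^{\to\Pi}$ under objectwise kernels and cokernels, using that products and direct limits are exact in $\mathbf{Ab}$ --- is sound, and is exactly what the paper's appeal to ``exactness in functor categories is pointwise'' amounts to.
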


\begin{proof} 
Let $\mathscr{B}:=\mathrm{fun}(\mathrm{Ex}(\mathscr{A},\mathbf{Ab}))$. As there exists a monoidal equivalence between $\mathscr{A}$ and $\mathscr{B}$, the exactness assumption on $\mathscr{A}$ carries over to $\mathscr{B}$, that is, for every exact functor $E:\mathscr{B} \to \mathbf{Ab}$, every $X \in \mathscr{B}$ and every short exact sequence $0 \to A \to B \to C \to 0$ in $\mathscr{B}$, \[ 0 \to E(X \otimes A) \to E(X \otimes B) \to E(X \otimes C) \to 0, \] \noindent is exact in $\mathbf{Ab}$. 
	
	Set $\mathcal{D}:=\mathrm{Ex}(\mathscr{A},\mathbf{Ab})$ so $\mathscr{B}=\mathrm{fun}(\mathcal{D})$. For every $D \in \mathcal{D}$, $E=\mathrm{ev}_{D}:\mathscr{B}=\mathrm{fun}(\mathcal{D}) \to \mathbf{Ab}$ is exact, where $\mathrm{ev}_D$ is given by `evaluation at $D$'. Therefore, for every exact sequence $0 \to F \to G \to H \to 0$ in $\mathrm{fun}(\mathcal{D})$ and every $K \in \mathrm{fun}(\mathcal{D})$, $0 \to (K \otimes F)(D) \to (K \otimes G)(D) \to (K \otimes H)(D) \to 0$ is exact for all $D \in \mathcal{D}$. Since exactness in functor categories is pointwise, $0 \to K \otimes F \to K \otimes G \to K \otimes H \to 0$ is exact so the monoidal structure on $\mathscr{B}=\mathrm{fun}(\mathcal{D})$ is exact in each variable.
	
	As we are assuming the equivalence between $\mathscr{A}$ and $\mathscr{B}=\mathrm{fun}(\mathrm{Ex}(\mathscr{A},\mathbf{Ab}))$ is monoidal, the monoidal structure on $\mathscr{A}$ is also exact in each variable, as required.   
\end{proof}

\begin{theorem} \label{TFAE SE tensor ideal}
	Let $\mathscr{A}$ be an additive symmetric monoidal, skeletally small abelian category. The following are equivalent:
	\begin{enumerate}[label=(\roman*)]
		\item The definable subcategory $\mathrm{Ex}(\mathscr{A},\mathbf{Ab}) \subseteq \mathscr{A}\hbox{-}\mathrm{Mod}$ is fp-hom-closed (with respect to Day convolution product).
		
		\item The Serre subcategory $\mathsf{S}_{\mathrm{Ex}} \subseteq (\mathscr{A}\hbox{-}\mathrm{mod},\mathbf{Ab})^{\mathrm{fp}}$ of all functors $F$ such that $\overrightarrow{F}(E)=0$ for all $E \in \mathrm{Ex}(\mathscr{A}, \mathbf{Ab})$ is a tensor-ideal of $(\mathscr{A}\hbox{-}\mathrm{mod},\mathbf{Ab})^{\mathrm{fp}}$ (with respect to day convolution product). 
		
		\item The tensor product on $\mathscr{A}$ is exact in each variable.
	\end{enumerate}
\end{theorem}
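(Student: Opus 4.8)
The plan is to establish the chain by proving (i) $\Leftrightarrow$ (ii) from the general theory and then proving (i) $\Leftrightarrow$ (iii) by reducing everything to a single computation of the internal hom out of a representable functor.

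For (i) $\Leftrightarrow$ (ii) I would instantiate Theorem \ref{thm} at $\mathcal{C}=\mathscr{A}\hbox{-}\mathrm{Mod}$. First I would check that $\mathscr{A}\hbox{-}\mathrm{Mod}$ satisfies Assumption \ref{Rmk C}: it is additive finitely accessible with products (being $(\mathscr{A},\mathbf{Ab})$ for $\mathscr{A}$ skeletally small), Day convolution makes it closed symmetric monoidal by Theorem \ref{Day}, and $\mathcal{C}^{\mathrm{fp}}=\mathscr{A}\hbox{-}\mathrm{mod}$ is a monoidal subcategory since $F_f\otimes F_g$ is again finitely presented (Section \ref{Day convolution product}). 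Here $\mathcal{C}^{\mathrm{fp}}\hbox{-}\mathrm{mod}=(\mathscr{A}\hbox{-}\mathrm{mod},\mathbf{Ab})^{\mathrm{fp}}$, and $\mathsf{S}_{\mathrm{Ex}}$ is exactly the Serre subcategory annihilating $\mathcal{D}=\mathrm{Ex}(\mathscr{A},\mathbf{Ab})$, so Theorem \ref{thm} says $\mathsf{S}_{\mathrm{Ex}}$ is a tensor-ideal precisely when $\mathcal{D}$ is fp-hom-closed, which is (ii) $\Leftrightarrow$ (i).

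The crux of the remaining equivalence is the identity $\mathrm{hom}((x,-),E)\cong E(x\otimes-)$ for $x\in\mathscr{A}$ and $E\in\mathrm{Ex}(\mathscr{A},\mathbf{Ab})$, where $\mathrm{hom}$ is the Day-convolution internal hom and $x\otimes-$ is the tensor on $\mathscr{A}$. This follows from Yoneda and the adjunction together with $(x,-)\otimes(b,-)\cong(x\otimes b,-)$: evaluating at $b$ gives $\mathrm{Hom}((b,-),\mathrm{hom}((x,-),E))\cong\mathrm{Hom}((x\otimes b,-),E)\cong E(x\otimes b)$ naturally in $b$. For (i) $\Rightarrow$ (iii), every representable $(x,-)$ lies in $\mathscr{A}\hbox{-}\mathrm{mod}$, so fp-hom-closedness forces $E(x\otimes-)\cong\mathrm{hom}((x,-),E)$ to be exact for every exact $E$. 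Given a short exact sequence $0\to M\to N\to P\to0$ in $\mathscr{A}$, the exact functor $E(x\otimes-)$ keeps it exact for every $E$; since $A\mapsto\mathrm{ev}_A$ is an exact equivalence $\mathscr{A}\simeq\mathrm{fun}(\mathrm{Ex}(\mathscr{A},\mathbf{Ab}))$ (Theorem \ref{Thm ABEX DEF}, \cite{PR10}) and exactness in $\mathrm{fun}(\mathcal{D})$ is detected pointwise by the $E\in\mathcal{D}$, the family $\mathrm{Ex}(\mathscr{A},\mathbf{Ab})$ jointly reflects exactness. Hence $0\to x\otimes M\to x\otimes N\to x\otimes P\to0$ is exact in $\mathscr{A}$, so $x\otimes-$ is exact, giving (iii).

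For the converse (iii) $\Rightarrow$ (i) I would compute $\mathrm{hom}(C,E)$ for arbitrary $C\in\mathscr{A}\hbox{-}\mathrm{mod}$. Taking a presentation $(b,-)\xrightarrow{(f,-)}(a,-)\to C\to0$ with $f\colon a\to b$ and applying the self-adjoint contravariant $\mathrm{hom}(-,E)$, which sends this cokernel to a kernel, yields $\mathrm{hom}(C,E)=\ker\big(E(f\otimes-)\colon E(a\otimes-)\to E(b\otimes-)\big)$. Evaluating at $c$ and using that $E$ is exact, then that $-\otimes c$ is exact by (iii), gives $\mathrm{hom}(C,E)(c)\cong E(\ker(f\otimes c))\cong E(\ker(f)\otimes c)$, so $\mathrm{hom}(C,E)\cong E(\ker(f)\otimes-)$. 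As a composite of the exact functors $\ker(f)\otimes-$ and $E$ this is exact, hence lies in $\mathrm{Ex}(\mathscr{A},\mathbf{Ab})$, so $\mathcal{D}$ is fp-hom-closed.

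I expect the main obstacle to be the two structural inputs rather than the bookkeeping: proving the internal-hom-out-of-a-representable identity cleanly for Day convolution, and invoking reflection of exactness by $\mathrm{Ex}(\mathscr{A},\mathbf{Ab})$ in exactly the form above. Should the reflection step prove delicate, an alternative route for (i) $\Rightarrow$ (iii) is to feed fp-hom-closedness and the resulting exactness of the functors $E(x\otimes-)$ into Lemma \ref{lem monoidal equivalence}, at the cost of first verifying that $A\mapsto\mathrm{ev}_A$ is a \emph{monoidal} equivalence.
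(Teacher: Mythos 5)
Your proposal is correct, and while your (i)$\Leftrightarrow$(ii) coincides with the paper's (both are immediate from Theorem \ref{thm} applied to $\mathcal{C}=\mathscr{A}\hbox{-}\mathrm{Mod}$), your handling of condition (iii) takes a genuinely different route. For (iii)$\Rightarrow$(i) the paper also starts from the identity $\mathrm{hom}((X,-),E)\cong E(X\otimes -)$ on representables, but to pass to a general $F_f$ it invokes the equivalence $\mathscr{A}\simeq\mathrm{fun}(\mathrm{Ex}(\mathscr{A},\mathbf{Ab}))$ of \cite{PR10} to write $(F_f,-)|_{\mathrm{Ex}(\mathscr{A},\mathbf{Ab})}\cong\mathrm{ev}_{X_F}$ and then chains Yoneda and adjunction isomorphisms; your computation $\mathrm{hom}(F_f,E)\cong\ker\big(E(f\otimes -)\big)\cong E(\ker(f)\otimes -)$, using that $\mathrm{hom}(-,E)$ turns the cokernel presentation into a kernel and that (iii) lets you pull the kernel through $-\otimes c$, is more elementary and avoids the equivalence entirely in this direction. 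For (i)$\Rightarrow$(iii) the divergence is sharper: the paper constructs the monoidal structure on $\mathrm{fun}(\mathrm{Ex}(\mathscr{A},\mathbf{Ab}))$ via Day's monoidal localisation (using (ii)), checks that $q\circ\mathcal{Y}^2$ is a monoidal equivalence, and then applies Lemma \ref{lem monoidal equivalence}; you bypass all of this by arguing that the exact functors jointly reflect exactness, justified by the facts that $A\mapsto\mathrm{ev}_A$ is an exact equivalence and that exactness in $\mathrm{fun}(\mathrm{Ex}(\mathscr{A},\mathbf{Ab}))$ is pointwise --- both of which the paper itself relies on inside the proof of Lemma \ref{lem monoidal equivalence}, so the step is legitimate. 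Your route is shorter and self-contained; what the paper's detour buys is the monoidality of the equivalence $\mathscr{A}\simeq\mathrm{fun}(\mathrm{Ex}(\mathscr{A},\mathbf{Ab}))$, which is not a throwaway: it is cited again later to show that the 2-functor $\xi$ is well defined on morphisms and that $\epsilon_{\mathscr{A}}$ is monoidal. So your argument proves the theorem more efficiently, at the cost of not producing that auxiliary infrastructure (your own noted fallback through Lemma \ref{lem monoidal equivalence} is exactly the paper's route, and would recover it).
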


\begin{proof}
	((i) $\iff$ (ii)) Follows directly from Theorem \ref{thm}.
	
	((iii) $\implies$ (i)) Suppose the monoidal structure on $\mathscr{A}$ is exact in each variable. We first show that $\mathrm{Ex}(\mathscr{A},\mathbf{Ab})$ is closed under $\mathrm{hom}(M,-)$ where $M \in \mathscr{A}\hbox{-}\mathrm{mod}$ is representable, say $M=(X,-)$. Indeed, in this case, for all $A \in \mathscr{A}$ and $E \in \mathrm{Ex}(\mathscr{A},\mathbf{Ab})$, \[\mathrm{hom}((X,-),E)(A) \cong ((A,-),\mathrm{hom}((X,-),E)) \cong ((A \otimes X,-),E)\cong E(A \otimes X),\] by the Yoneda lemma and adjunction isomorphisms. What's more, all these isomorphisms are natural in $A$. Therefore, \[ 0 \to \mathrm{hom}((X,-),E)(A) \to \mathrm{hom}((X,-),E)(B) \to \mathrm{hom}((X,-),E)(C) \to 0\] is exact if and only if \[0 \to E(A \otimes X) \to E(B \otimes X) \to E(C \otimes X) \to 0\] is exact. But the latter statement holds by our assumption on $\mathscr{A}$, as $E$ is an exact functor. Therefore $\mathrm{hom}((X,-),E)$ is exact as required.
	
	Now we generalise to $F_f \in \mathscr{A}\hbox{-}\mathrm{mod}$. We want to show that $\mathrm{hom}(F_f,E):\mathscr{A} \to \mathbf{Ab}$ is an exact functor. 
	
	First note that $(F_f,-)|_{\mathrm{Ex}(\mathscr{A},\mathbf{Ab})}$ commutes with direct products and direct limits and therefore is an object of $\mathrm{fun}(\mathrm{Ex}(\mathscr{A},\mathbf{Ab}))$. By (\cite{PR10}, Theorem 2.2), there exists an equivalence $\mathscr{A} \simeq \mathrm{fun}(\mathrm{Ex}(\mathscr{A},\mathbf{Ab}))$ given by $A \mapsto \mathrm{ev}_A$, where $\mathrm{ev}_A:\mathrm{Ex}(\mathscr{A},\mathbf{Ab}) \to \mathbf{Ab}$ maps an exact functor $E$ to $E(A)$. Therefore, there exists some $X_F \in \mathscr{A}$ such that $(F_f,-)|_{\mathrm{Ex}(\mathscr{A},\mathbf{Ab})}\cong \mathrm{ev}_{X_F}$. 
	
	Suppose $0 \to A \to B \to C \to 0$ is a short exact sequence in $\mathscr{A}$. As $E$ is an exact functor and the monoidal structure on $\mathscr{A}$ is exact in each variable,  \[0 \to E(A \otimes X_F) \to E(B \otimes X_F) \to E(C \otimes X_F) \to 0, \] is exact in $\mathbf{Ab}$. As a result, by the Yoneda lemma, \[0 \to ((A \otimes X_F  ,-),E) \to ((B \otimes X_F,-),E) \to ((C \otimes X_F,-),E) \to 0, \] is exact in $\mathbf{Ab}$ and by the adjunction isomorphism this gives the exact sequence \[0 \to ((X_F,-),\mathrm{hom}((A,-),E)) \to ((X_F,-),\mathrm{hom}((B,-),E)) \to ((X_F,-),\mathrm{hom}((C,-),E)) \to 0. \]  
	Applying the Yoneda lemma once more we have the exact sequence \[0 \to (\mathrm{hom}((A,-),E))(X_F) \to (\mathrm{hom}((B,-),E))(X_F) \to (\mathrm{hom}((C,-),E))(X_F) \to 0, \] which is isomorphic to \[0 \to (F_f,\mathrm{hom}((A,-),E)) \to (F_f,\mathrm{hom}((B,-),E)) \to (F_f,\mathrm{hom}((C,-),E)) \to 0, \] as we have already seen that $\mathrm{hom}((A,-),E)$, $\mathrm{hom}((B,-),E)$ and $\mathrm{hom}((C,-),E)$ are exact functors and $(F_f,-)|_{\mathrm{Ex}(\mathscr{A},\mathbf{Ab})}\cong \mathrm{ev}_{X_F}$. 
	
	Again, by the Yoneda lemma and adjunction isomorphisms we have for every $A \in \mathscr{A}$, $\mathrm{hom}(F_f,E)(A) \cong ((A,-),\mathrm{hom}(F_f,E)) \cong (F_f \otimes (A,-),E) \cong (F_f, \mathrm{hom}((A,-),E))$. What's more, all these isomorphisms are natural in $A$. Therefore \[ 0 \to \mathrm{hom}(F_f,E)(A) \to \mathrm{hom}(F_f,E)(B) \to \mathrm{hom}(F_f,E)(C) \to 0,\] is exact in $\mathbf{Ab}$ and $\mathrm{hom}(F_f,E)$ is an exact functor as required.
	
	((i) $\implies$ (iii)) Since we have shown that (i) $\iff$ (ii) we assume both hold.
	By (i) we have that $\mathrm{hom}((X,-),E)$ is exact for all $X \in \mathscr{A}$. Therefore, for all short exact sequences $0 \to A \to B \to C \to 0$ in $\mathscr{A}$, \[ 0 \to \mathrm{hom}((X,-),E)(A) \to \mathrm{hom}((X,-),E)(B) \to \mathrm{hom}((X,-),E)(C) \to 0,\] \noindent is exact. But we have isomorphisms $\mathrm{hom}((X,-),E)(A) \cong ((A,-),\mathrm{hom}((X,-),E)) \cong ((X,-) \otimes (A,-),E)=((X \otimes A,-),E) \cong E(X \otimes A)$ which are natural in $A$. Therefore, $0 \to E(X \otimes A) \to E(X \otimes B) \to E(X \otimes C) \to 0$ is also exact. 
	
	So for any exact sequence, $0 \to A \to B \to C \to 0$ in $\mathscr{A}$, $0 \to X \otimes A \to X \otimes B \to X \otimes C \to 0$ has exact image in $\mathbf{Ab}$ under any exact functor $E:\mathscr{A} \to \mathbf{Ab}$. By Lemma \ref{lem monoidal equivalence}, it remains to show that the equivalence $\mathscr{A} \simeq \mathrm{fun}(\mathrm{Ex}(\mathscr{A},\mathbf{Ab}))$ is monoidal.  
	
	By (ii), $\mathsf{S}_{\mathrm{Ex}}$ is a tensor ideal of $(\mathscr{A}\hbox{-}\mathrm{mod},\mathbf{Ab})^{\mathrm{fp}}$. Therefore we can define a monoidal structure on $\mathrm{fun}(\mathrm{Ex}(\mathscr{A},\mathbf{Ab}))$ (as in Definition \ref{remark strict}) such that the localisation functor $q:(\mathscr{A}\hbox{-}\mathrm{mod},\mathbf{Ab})^{\mathrm{fp}} \to (\mathscr{A}\hbox{-}\mathrm{mod},\mathbf{Ab})^{\mathrm{fp}}/\mathsf{S}_{\mathrm{Ex}} \simeq \mathrm{fun}(\mathrm{Ex}(\mathscr{A},\mathbf{Ab}))$ is a monoidal functor. 
	
	Note that the functor $\mathcal{Y}^2:\mathscr{A} \to (\mathscr{A}\hbox{-}\mathrm{mod},\mathbf{Ab})^{\mathrm{fp}}$ given by $A \mapsto ((A,-),-)$ is monoidal with respect to Day convolution product and the equivalence $\mathscr{A} \simeq \mathrm{fun}(\mathrm{Ex}(\mathscr{A},\mathbf{Ab}))$ from (\cite{PR10}, Theorem 2.2) can be taken to be $q \circ \mathcal{Y}^2$. Therefore, this equivalence is monoidal. Lemma \ref{lem monoidal equivalence} completes the proof.   
\end{proof}

\begin{remark} \label{Rmk exactness}
	Recall that the objects of the 2-category $\mathbb{ABEX}^{\otimes}$ are skeletally small abelian categories with  additive symmetric monoidal structures which are \textit{exact} in each variable. However, in most examples (for instance $\mathscr{A}=R\hbox{-}\mathrm{mod}$ for $R$ a commutative ring) the monoidal structure is only right exact. Theorem \ref{TFAE SE tensor ideal} shows where the equivalence fails without the exactness assumption. Indeed, if we desire the equivalence $\mathscr{A} \simeq \mathrm{fun}(\mathrm{Ex}(\mathscr{A},\mathbf{Ab}))$ to be monoidal, the Serre subcategory $\mathsf{S}_{\mathrm{Ex}}$ must be a tensor-ideal, to induce a monoidal structure on $\mathrm{fun}(\mathrm{Ex}(\mathscr{A},\mathbf{Ab}))$. 
\end{remark}

\begin{prop}
	There exists a 2-functor $\xi:\mathbb{ABEX}^{\otimes} \to \mathbb{DEF}^{\otimes}$ given on objects by $\mathscr{A} \mapsto (\mathrm{Ex}(\mathscr{A},\mathbf{Ab}), \mathscr{A}\hbox{-}\mathrm{Mod},\otimes)$ where the monoidal structure on $\mathscr{A}\hbox{-}\mathrm{Mod}$ is induced by Day convolution product.
\end{prop}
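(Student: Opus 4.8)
The plan is to verify in turn that $\xi$ is well defined on objects, on $1$-morphisms and on $2$-morphisms, and that it respects (anti-)composition; the genuinely new work beyond the underlying anti-equivalence of Theorem~\ref{Thm ABEX DEF} is the preservation of the monoidal data.

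First, on objects. Fix $\mathscr{A} \in \mathbb{ABEX}^{\otimes}$. The functor category $\mathscr{A}\hbox{-}\mathrm{Mod} = (\mathscr{A},\mathbf{Ab})$ is additive finitely accessible with products, its finitely presented objects are exactly $\mathscr{A}\hbox{-}\mathrm{mod}$, and by Theorem~\ref{Day} Day convolution endows it with an additive closed symmetric monoidal structure for which $\mathscr{A}\hbox{-}\mathrm{mod}$ is a monoidal subcategory; thus $\mathcal{C} = \mathscr{A}\hbox{-}\mathrm{Mod}$ satisfies Assumption~\ref{Rmk C}. That $\mathrm{Ex}(\mathscr{A},\mathbf{Ab})$ is a definable subcategory is part of the original anti-equivalence. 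It remains to check the two extra conditions defining objects of $\mathbb{DEF}^{\otimes}$. Since $\mathscr{A} \in \mathbb{ABEX}^{\otimes}$ its tensor product is exact in each variable, so condition~(iii) of Theorem~\ref{TFAE SE tensor ideal} holds; the implication (iii)$\implies$(i) then gives that $\mathrm{Ex}(\mathscr{A},\mathbf{Ab})$ is fp-hom-closed. For the exactness criterion I would invoke Proposition~\ref{Lem exactness if}: the proof of Theorem~\ref{TFAE SE tensor ideal} supplies a \emph{monoidal} equivalence $\mathrm{fun}(\mathrm{Ex}(\mathscr{A},\mathbf{Ab})) \simeq \mathscr{A}$, through which the exactness of $\otimes$ on $\mathscr{A}$ transports to exactness in each variable of the induced monoidal structure on $\mathrm{fun}(\mathrm{Ex}(\mathscr{A},\mathbf{Ab}))$ (as defined in Definition~\ref{remark strict}); Proposition~\ref{Lem exactness if} then yields the exactness criterion for $\mathrm{Ex}(\mathscr{A},\mathbf{Ab})$. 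Hence the triple is a bona fide object of $\mathbb{DEF}^{\otimes}$.

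Next, on $1$-morphisms. An arrow $H:\mathscr{A} \to \mathscr{A}'$ of $\mathbb{ABEX}^{\otimes}$ is an additive exact monoidal functor. Following Theorem~\ref{Thm ABEX DEF} I set $I := \xi(H) = (-\circ H)\colon \mathrm{Ex}(\mathscr{A}',\mathbf{Ab}) \to \mathrm{Ex}(\mathscr{A},\mathbf{Ab})$, which is well defined because $H$ is exact and which commutes with products and direct limits, as these are computed pointwise in $\mathbf{Ab}$. To see that $I$ is a morphism of $\mathbb{DEF}^{\otimes}$ I must check that the induced functor $I_0\colon \mathrm{fun}(\mathrm{Ex}(\mathscr{A},\mathbf{Ab})) \to \mathrm{fun}(\mathrm{Ex}(\mathscr{A}',\mathbf{Ab}))$ is monoidal. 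By the original anti-equivalence $I_0$ corresponds, under the monoidal equivalences $\mathrm{fun}(\mathrm{Ex}(\mathscr{A},\mathbf{Ab})) \simeq \mathscr{A}$ and $\mathrm{fun}(\mathrm{Ex}(\mathscr{A}',\mathbf{Ab})) \simeq \mathscr{A}'$ established in the object step, to $H$ itself; since $H$ is monoidal, so is $I_0$. Hence $I = \xi(H)$ is a morphism of $\mathbb{DEF}^{\otimes}$.

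Finally, on $2$-morphisms $\xi$ acts exactly as the $2$-functor underlying Theorem~\ref{Thm ABEX DEF}, by precomposition; this is unaffected by the monoidal enrichment. The $2$-functor axioms (preservation of identities, compatibility with horizontal and vertical composition, and the contravariance on $1$-morphisms) then follow verbatim from the corresponding statements for the anti-equivalence between $\mathbb{ABEX}$ and $\mathbb{DEF}$, because $\xi$ commutes with the forgetful $2$-functors to $\mathbb{DEF}$ and $\mathbb{ABEX}$ while the extra monoidal data has been checked separately. The main obstacle is concentrated in the object step: one must be certain that the equivalence $\mathrm{fun}(\mathrm{Ex}(\mathscr{A},\mathbf{Ab})) \simeq \mathscr{A}$ is genuinely monoidal, so that the exactness hypothesis on $\mathscr{A}$ can be fed into Proposition~\ref{Lem exactness if} to produce the exactness criterion; everything else reduces to the non-monoidal anti-equivalence together with the monoidality of $H$.
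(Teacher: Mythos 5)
Your proposal is correct and follows essentially the same route as the paper: fp-hom-closure of $\mathrm{Ex}(\mathscr{A},\mathbf{Ab})$ via the implication (iii)$\implies$(i) of Theorem \ref{TFAE SE tensor ideal}, the exactness criterion via the monoidal equivalence $\mathscr{A} \simeq \mathrm{fun}(\mathrm{Ex}(\mathscr{A},\mathbf{Ab}))$ fed into Proposition \ref{Lem exactness if}, monoidality of $I_0$ by transporting $H$ across the monoidal equivalences, and the 2-morphism/axiom checks inherited from the original anti-equivalence. The only cosmetic difference is that you spell out the verification of Assumption \ref{Rmk C} for $\mathscr{A}\hbox{-}\mathrm{Mod}$, which the paper leaves implicit.
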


\begin{proof}
	By Theorem \ref{TFAE SE tensor ideal}, $\mathrm{Ex}(\mathscr{A},\mathbf{Ab})$ is an fp-hom-closed definable subcategory of $\mathscr{A}\hbox{-}\mathrm{Mod}$. Furthermore, as noted in the proof of Theorem \ref{TFAE SE tensor ideal}, $\mathscr{A} \simeq \mathrm{fun}(\mathrm{Ex}(\mathscr{A},\mathbf{Ab}))$ is a monoidal equivalence meaning the monoidal structure on $\mathrm{fun}(\mathrm{Ex}(\mathscr{A},\mathbf{Ab}))$ is exact. In turn this implies, by Proposition \ref{prop exactness only if}, that $\mathrm{Ex}(\mathscr{A},\mathbf{Ab})$ satisfies the exactness criterion. Therefore $\xi$ is well defined on objects. 
	
	Next we need to show that, given a morphism $E:\mathscr{A} \to \mathscr{B}$ in $\mathbb{ABEX}^{\otimes}$, $E^*:\mathrm{Ex}(\mathscr{B},\mathbf{Ab}) \to \mathrm{Ex}(\mathscr{A},\mathbf{Ab})$ is a morphism in $\mathbb{DEF}^{\otimes}$ that is $(E^*)_0:\mathrm{fun}(\mathrm{Ex}(\mathscr{A},\mathbf{Ab})) \to \mathrm{fun}(\mathrm{Ex}(\mathscr{B},\mathbf{Ab}))$ is monoidal. 
	
	 By the original anti-equivalence in \cite{PR10}, we have the following commutative diagram.
	 
	 \begin{tikzpicture}
	 \matrix (m) [matrix of math nodes,row sep=5em,column sep=4em,minimum width=2em]
	 {
	 	\mathscr{A} &  \mathrm{fun}(\mathrm{Ex}(\mathscr{A},\mathbf{Ab}))   \\
	 	\mathscr{B} & \mathrm{fun}(\mathrm{Ex}(\mathscr{B},\mathbf{Ab})) \\};
	 \path[-stealth]
	 (m-1-1) edge node [above] {$\sim$} (m-1-2)
	 (m-1-1) edge node [left] {$E$} (m-2-1)
	 (m-1-2) edge node [right] {$(E^*)_0$} (m-2-2)
	 (m-2-1) edge node [below] {$\sim$} (m-2-2);
	 \end{tikzpicture}
	 
	 By the proof of Theorem \ref{TFAE SE tensor ideal}, the equivalence given by the horizontal maps is monoidal. Therefore the inverse equivalence $\mathrm{fun}(\mathrm{Ex}(\mathscr{A},\mathbf{Ab})) \to \mathscr{A}$ is also monoidal and $(E^*)_0$ is naturally isomorphic to a monoidal functor, hence monoidal.
	 
	 Finally, $\xi$ acts on natural transformations in the same way as the original anti-equivalence, (forgetting the monoidal structure) and therefore is a well-defined 2-functor.
\end{proof}

\subsection{Completing the proof}

It remains to prove the following proposition.

\begin{prop}
For any $\mathscr{A} \in \mathbb{ABEX}^{\otimes}$ the functor $\epsilon_{\mathscr{A}}: \mathscr{A} \to \mathrm{fun}(\mathrm{Ex}(\mathscr{A},\mathbf{Ab}))$ given by $\epsilon_{\mathscr{A}}(A)=\mathrm{ev}_A$ is monoidal. Here $\mathrm{ev}_A:\mathrm{Ex}(\mathscr{A},\mathbf{Ab}) \to  \mathbf{Ab}$ maps an exact functor $F:\mathscr{A} \to \mathbf{Ab}$ to $F(A)$.

Similarly, for any $(\mathcal{D},\mathcal{C},\otimes) \in \mathbb{DEF}^{\otimes}$ the functor $\epsilon_{\mathcal{D}}:\mathcal{D} \to \mathrm{Ex}(\mathrm{fun}(\mathcal{D}),\mathrm{Ab})$ (as in the proof of Theorem 2.3 in \cite{PR10}) is a morphism in $\mathbb{DEF}^{\otimes}$.
\end{prop}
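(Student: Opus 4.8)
The plan is to treat the two statements separately, deriving the first from the proof of Theorem \ref{TFAE SE tensor ideal} and the second from the first together with the triangle identities of the anti-equivalence of \cite{PR10}.

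For the first statement, I would begin by recording that, since $\mathscr{A} \in \mathbb{ABEX}^{\otimes}$, its tensor product is exact in each variable, so condition (iii) of Theorem \ref{TFAE SE tensor ideal} holds. The proof of that theorem already exhibits the equivalence $\mathscr{A} \simeq \mathrm{fun}(\mathrm{Ex}(\mathscr{A},\mathbf{Ab}))$ of (\cite{PR10}, Theorem 2.2), namely $A \mapsto \mathrm{ev}_A$, as the composite $q \circ \mathcal{Y}^2$, where $\mathcal{Y}^2:\mathscr{A} \to (\mathscr{A}\hbox{-}\mathrm{mod},\mathbf{Ab})^{\mathrm{fp}}$, $A \mapsto ((A,-),-)$, is monoidal for Day convolution product and $q$ is the (monoidal) localisation functor of Definition \ref{remark strict}. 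Since $\epsilon_{\mathscr{A}}$ is precisely this composite $q \circ \mathcal{Y}^2$ and a composite of monoidal functors is monoidal, $\epsilon_{\mathscr{A}}$ is monoidal; in particular the unit isomorphism $\epsilon_{\mathscr{A}}(1_{\mathscr{A}}) \cong 1$ and the associativity and unitality coherence data are inherited from those of $\mathcal{Y}^2$ and $q$. This disposes of the first statement.

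For the second statement, set $\mathscr{A}:=\mathrm{fun}(\mathcal{D})$; by Theorem \ref{thm} and Proposition \ref{prop exactness only if} this is an object of $\mathbb{ABEX}^{\otimes}$, so the first statement applies to it. By the original anti-equivalence, $\epsilon_{\mathcal{D}}:\mathcal{D} \to \mathrm{Ex}(\mathrm{fun}(\mathcal{D}),\mathbf{Ab})$ is already an equivalence in $\mathbb{DEF}$, hence additive and commuting with products and direct limits; it therefore only remains to check that the induced functor $(\epsilon_{\mathcal{D}})_0:\mathrm{fun}(\mathrm{Ex}(\mathrm{fun}(\mathcal{D}),\mathbf{Ab})) \to \mathrm{fun}(\mathcal{D})$ is monoidal. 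The key observation is that $(\epsilon_{\mathcal{D}})_0 = \mathrm{fun}(\epsilon_{\mathcal{D}})$ is the image of $\epsilon_{\mathcal{D}}$ under the (contravariant) 2-functor $\mathrm{fun}$, so that the relevant triangle (zig-zag) identity of the 2-anti-equivalence of \cite{PR10} supplies a natural isomorphism $(\epsilon_{\mathcal{D}})_0 \circ \epsilon_{\mathrm{fun}(\mathcal{D})} \cong \mathrm{Id}_{\mathrm{fun}(\mathcal{D})}$.

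I would finish by invoking the first statement for $\mathscr{A} = \mathrm{fun}(\mathcal{D})$: the functor $\epsilon_{\mathrm{fun}(\mathcal{D})}$ is a strong monoidal equivalence. Since $(\epsilon_{\mathcal{D}})_0$ is an equivalence and, by the triangle identity above, is a quasi-inverse of $\epsilon_{\mathrm{fun}(\mathcal{D})}$, and since a quasi-inverse of a strong monoidal equivalence carries a canonical strong monoidal structure, it follows that $(\epsilon_{\mathcal{D}})_0$ is monoidal. Hence $\epsilon_{\mathcal{D}}$ is a morphism in $\mathbb{DEF}^{\otimes}$. The main obstacle is the bookkeeping in this last paragraph: one must match the variance of $\mathrm{fun}$ with the direction of $\epsilon_{\mathcal{D}}$ so as to identify $(\epsilon_{\mathcal{D}})_0$ with $\mathrm{fun}(\epsilon_{\mathcal{D}})$, read off the correct triangle identity, and then appeal to the standard fact that the quasi-inverse of a strong monoidal equivalence is itself strong monoidal. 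The first statement, by contrast, is essentially immediate from the proof of Theorem \ref{TFAE SE tensor ideal}.
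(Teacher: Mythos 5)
Your proposal is correct and follows essentially the same route as the paper: the first statement via the identification $\epsilon_{\mathscr{A}} \simeq q \circ \mathcal{Y}^2$ with both factors monoidal, and the second by exhibiting $(\epsilon_{\mathcal{D}})_0$ as a quasi-inverse of the monoidal equivalence $\epsilon_{\mathrm{fun}(\mathcal{D})}$ (which is monoidal by the first statement applied to $\mathrm{fun}(\mathcal{D})$) and invoking that a quasi-inverse of a monoidal equivalence is monoidal (\cite{EGNO15}, Remark 1.5.3). The only difference is that where you cite a triangle identity from \cite{PR10}, the paper verifies it directly: it fixes a quasi-inverse $\gamma$ of $\epsilon_{\mathrm{fun}(\mathcal{D})}$ and computes $(\epsilon_{\mathcal{D}})_0\big((\epsilon_{\mathrm{fun}(\mathcal{D})} \circ \gamma)(F)\big)(X)=\mathrm{ev}_{\gamma(F)}(\mathrm{ev}_X)=\gamma(F)(X)$, whence $(\epsilon_{\mathcal{D}})_0 \cong \gamma$ --- a one-line check worth including, since \cite{PR10} does not state the zig-zag identities explicitly.
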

 
\begin{proof}
	By (\cite{P11}, Lemma 12.9 and Theorem 12.10) the functor \[(\mathscr{A}\hbox{-}\mathrm{mod},\mathbf{Ab})^{\mathrm{fp}} \xrightarrow{q} (\mathscr{A}\hbox{-}\mathrm{mod},\mathbf{Ab})^{\mathrm{fp}}/\mathsf{S}_{\mathrm{Ex}} \simeq \mathrm{fun}(\mathrm{Ex}(\mathscr{A},\mathbf{Ab})),\] maps a finitely presented functor $F:\mathscr{A}\hbox{-}\mathrm{mod} \to \mathbf{Ab}$ to $\overrightarrow{F}|_{\mathcal{D}}$ that is the restriction to $\mathcal{D}$ of the unique direct limit extension of $F$. By the Yoneda lemma $\epsilon_{\mathscr{A}}: \mathscr{A} \to \mathrm{fun}(\mathrm{Ex}(\mathscr{A},\mathbf{Ab}))$ is naturally equivalent to the functor  
	\[\mathscr{A} \xrightarrow{\mathcal{Y}^2} (\mathscr{A}\hbox{-}\mathrm{mod},\mathbf{Ab})^{\mathrm{fp}} \xrightarrow{q} (\mathscr{A}\hbox{-}\mathrm{mod},\mathbf{Ab})^{\mathrm{fp}}/\mathsf{S}_{\mathrm{Ex}} \simeq \mathrm{fun}(\mathrm{Ex}(\mathscr{A},\mathbf{Ab})),\] where $\mathcal{Y}^2:\mathscr{A} \to (\mathscr{A}\hbox{-}\mathrm{mod},\mathbf{Ab})^{\mathrm{fp}}$ denotes the Yoneda embedding $A \mapsto ((A,-),-)$. Therefore as the Yoneda embedding is monoidal with respect to Day convolution product and the monoidal structure on $\mathrm{fun}(\mathrm{Ex}(\mathscr{A},\mathbf{Ab}))$ is defined such that the localisation functor $q$ and the equivalence $(\mathscr{A}\hbox{-}\mathrm{mod},\mathbf{Ab})^{\mathrm{fp}} / \mathsf{S}_{\mathrm{Ex}} \simeq \mathrm{fun}(\mathrm{Ex}(\mathscr{A},\mathbf{Ab}))$ are monoidal, $\epsilon_{\mathscr{A}}$ is a monoidal functor. 	
	
	Next we show that, for all $(\mathcal{D},\mathcal{C},\otimes)$ in $\mathbb{DEF}^{\otimes}$, $(\epsilon_{\mathcal{D}})_0:\mathrm{fun}(\mathrm{Ex}(\mathrm{fun}(\mathcal{D}),\mathrm{Ab})) \to \mathrm{fun}(\mathcal{D})$ is monoidal. By \cite{PR10}, $\epsilon_{\mathrm{fun}(\mathcal{D})}:\mathrm{fun}(\mathcal{D})\to \mathrm{fun}(\mathrm{Ex}(\mathrm{fun}(\mathcal{D}),\mathrm{Ab}))$ is an equivalence so we have a functor, $\gamma:\mathrm{fun}(\mathrm{Ex}(\mathrm{fun}(\mathcal{D}),\mathrm{Ab})) \xrightarrow{\sim} \mathrm{fun}(\mathcal{D})$, which is both right and left adjoint to $\epsilon_{\mathrm{fun}(\mathcal{D})}$. We show that $(\epsilon_{\mathcal{D}})_0$ is naturally isomorphic to $\lambda$. 
	
	The unit of the adjunction $\gamma \dashv \epsilon_{\mathrm{fun}(\mathcal{D})}$ gives a natural isomorphism $\eta: \mathrm{Id}_{\mathrm{fun}(\mathrm{Ex}(\mathrm{fun}(\mathcal{D}),\mathbf{Ab}))} \xrightarrow{\sim}  \epsilon_{\mathrm{fun}(\mathcal{D})} \circ \gamma$.	
		
	Now, for $X \in \mathcal{D}$ and $F \in \mathrm{fun}(\mathrm{Ex}(\mathrm{fun}(\mathcal{D}),\mathrm{Ab}))$, $(\epsilon_{\mathcal{D}})_0((\epsilon_{\mathrm{fun}(\mathcal{D})} \circ \gamma)(F))(X)= \mathrm{ev}_{(\gamma)(F)}(\mathrm{ev}_{X})=\mathrm{ev}_{X}(\gamma(F))=\gamma(F)(X)$, so $(\epsilon_{\mathcal{D}})_0 \circ \epsilon_{\mathrm{fun}(\mathcal{D})} \circ \gamma=\gamma$. Therefore the composition of the natural isomorphism $\eta$ and the functor $(\epsilon_{\mathcal{D}})_0$ gives a natural isomorphism  	
	\[ (\epsilon_{\mathcal{D}})_0 \eta:(\epsilon_{\mathcal{D}})_0 \to (\epsilon_{\mathcal{D}})_0 \circ \epsilon_{\mathrm{fun}(\mathcal{D})} \circ \gamma=\gamma. \]	

 We have already seen that $\epsilon_{\mathrm{fun}(\mathcal{D})}$ is monoidal and therefore we can take $\gamma$ to also be monoidal (e.g. see \cite{EGNO15}, Remark 1.5.3).
 
Therefore $(\epsilon_{\mathcal{D}})_0$ is naturally isomorphic to a monoidal functor and so is itself a monoidal functor. Hence $\epsilon_{\mathcal{D}}:\mathcal{D} \to \mathrm{Ex}(\mathrm{fun}(\mathcal{D},\mathbf{Ab}))$ is a morphism in $\mathbb{DEF}^{\otimes}$ as required.     
\end{proof}

\begin{remark}
The following diagram commutes, where the 2-functors denoted by $\mathscr{F}$ are the forgetful 2-functors and the vertical maps are the 2-category anti-equivalences. 
	
	 \begin{tikzpicture}
	\matrix (m) [matrix of math nodes,row sep=5em,column sep=4em,minimum width=2em]
	{
		\mathbb{ABEX}^{\otimes} &  \mathbb{ABEX}   \\
		\mathbb{DEF}^{\otimes} & \mathbb{DEF} \\};
	\path[-stealth]
	(m-1-1) edge node [above] {$\mathscr{F}$} (m-1-2)
	(m-1-1) edge node [left] {Theorem 		\ref{Thm 2-cat eq}} (m-2-1)
	(m-2-1) edge (m-1-1)
	(m-1-2) edge node [right] {\cite{PR10}} (m-2-2)
	(m-2-2) edge (m-1-2)
	(m-2-1) edge node [below] {$\mathscr{F}$} (m-2-2);
	\end{tikzpicture}
	
\end{remark}

\section{Removing the exactness criterion} \label{without exactness}

As noted in Remark \ref{Rmk exactness}, for our 2-category anti-equivalence to hold, we required the monoidal structure on the skeletally small abelian category to be exact in each variable. However, given any fp-hom-closed definable subcategory $\mathcal{D}$ of a finitely accessible category $\mathcal{C}$,  which satisfies Assumption \ref{Rmk C}, we can induce a right exact monoidal structure on $\mathrm{fun}(\mathcal{D})$ as in Definition \ref{remark strict}. In many cases, this monoidal structure on the functor category is not left exact. In this section we consider what can be said about the relationship between definability and the monoidal structure for fixed $\mathcal{C}$, when we remove the exactness assumption.

\subsection{The Ziegler spectrum} \label{section ziegler}

In this section we define a coarser topology, $\mathrm{Zg}^{\mathrm{hom}}(\mathcal{C})$, on $\mathrm{pinj}_{\mathcal{C}}$ such that the identity morphism $\mathrm{Zg}(\mathcal{C}) \to \mathrm{Zg}^{\mathrm{hom}}(\mathcal{C})$ is a continous map.

\begin{theorem}
	Setting the closed subcategories of $\mathrm{pinj}_{\mathcal{C}}$ to be those given by the indecomposable pure-injectives contained in an fp-hom-closed definable subcategory of $\mathcal{C}$ defines a topology on $\mathrm{pinj}_{\mathcal{C}}$ which we will call the \textbf{fp-hom-closed Ziegler topology} and denote by $\mathrm{Zg}(\mathcal{C})^{\mathrm{hom}}$.  
\end{theorem}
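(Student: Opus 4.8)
The plan is to verify the three closed-set axioms for the proposed collection: that $\emptyset$ and the whole space $\mathrm{pinj}_{\mathcal{C}}$ are closed, that arbitrary intersections of closed sets are closed, and that finite unions of closed sets are closed. Throughout I would exploit the fact that the ordinary Ziegler topology already exists, together with the bijection of Theorem \ref{loc fp def Serre} between definable subcategories of $\mathcal{C}$, Serre subcategories of $\mathcal{C}^{\mathrm{fp}}\hbox{-}\mathrm{mod}$, and closed subsets of $\mathrm{Zg}(\mathcal{C})$. The point is that the proposed closed sets are exactly those Ziegler-closed sets whose corresponding definable subcategory is fp-hom-closed, equivalently (by Theorem \ref{thm}) whose corresponding Serre subcategory is a Serre tensor-ideal. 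So the whole task reduces to showing that the fp-hom-closed definable subcategories form a sublattice of the lattice of all definable subcategories which contains the top and bottom elements.

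For the top and bottom, $\mathcal{C}$ itself is fp-hom-closed since $\mathrm{hom}(A,X) \in \mathcal{C}$ for every $A \in \mathcal{C}^{\mathrm{fp}}$ and $X \in \mathcal{C}$ trivially, and it yields the whole space; the zero subcategory $\{0\}$ is fp-hom-closed because $\mathrm{hom}(A,0) \cong 0$, and it contains no indecomposable pure-injectives, so yields $\emptyset$. For arbitrary intersections I would argue directly on the definable side: if $(\mathcal{D}_i)_{i \in I}$ are fp-hom-closed definable subcategories, then $\bigcap_i \mathcal{D}_i$ is again definable, and for $X \in \bigcap_i \mathcal{D}_i$ and $A \in \mathcal{C}^{\mathrm{fp}}$ we have $\mathrm{hom}(A,X) \in \mathcal{D}_i$ for each $i$, hence $\mathrm{hom}(A,X) \in \bigcap_i \mathcal{D}_i$. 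Since the set-theoretic intersection of the corresponding closed sets is precisely the closed set attached to $\bigcap_i \mathcal{D}_i$, arbitrary intersections are closed.

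The finite union is the crux, and the one step where I would not argue naively. The union $C_1 \cup C_2$ of two such closed sets is closed in the ordinary Ziegler topology and therefore corresponds to a definable subcategory $\mathcal{E}$, but $\mathcal{E}$ is the definable closure $\mathcal{D}_1 \vee \mathcal{D}_2$ of $\mathcal{D}_1 \cup \mathcal{D}_2$, not the (non-definable) set-union, so closure under $\mathrm{hom}(A,-)$ is not visible directly. The clean resolution is to pass to Serre subcategories: the bijection of Theorem \ref{loc fp def Serre} is an anti-isomorphism of lattices, so if $\mathsf{S}_1, \mathsf{S}_2$ are the Serre subcategories attached to $\mathcal{D}_1, \mathcal{D}_2$, then the join $\mathcal{D}_1 \vee \mathcal{D}_2$ corresponds to the meet $\mathsf{S}_1 \cap \mathsf{S}_2$. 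By Theorem \ref{thm} each $\mathsf{S}_i$ is a Serre tensor-ideal, and an intersection of Serre tensor-ideals is again one: it is Serre as an intersection of Serre subcategories, and if $F \in \mathsf{S}_1 \cap \mathsf{S}_2$ and $C \in \mathcal{C}^{\mathrm{fp}}$ then $(C,-) \otimes F$ lies in both $\mathsf{S}_1$ and $\mathsf{S}_2$, hence in their intersection, so the representable criterion of Lemma \ref{definable tensor ideal respresentables} applies. Thus $\mathsf{S}_1 \cap \mathsf{S}_2$ is a Serre tensor-ideal, and by Theorem \ref{thm} again $\mathcal{D}_1 \vee \mathcal{D}_2$ is fp-hom-closed, so $C_1 \cup C_2$ is one of our closed sets.

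With these three facts the collection satisfies the closed-set axioms and defines a topology; continuity of the identity $\mathrm{Zg}(\mathcal{C}) \to \mathrm{Zg}(\mathcal{C})^{\mathrm{hom}}$ is then immediate, since every fp-hom-closed closed set is in particular Ziegler-closed, making the new topology coarser. The only genuine subtlety is the finite-union step, and the main obstacle there is resisting the temptation to work with the naive union of definable subcategories; everything becomes routine once the problem is transported across the lattice anti-isomorphism and reduced to the observation that intersections of Serre tensor-ideals remain tensor-ideals.
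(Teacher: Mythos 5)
Your proposal is correct and follows essentially the same route as the paper's own proof: the finite-union step is handled exactly as in the paper, by transporting the join $\big{<} \mathcal{D} \cup \mathcal{D}'\big{>}^{\mathrm{def}}$ across the lattice anti-isomorphism of Theorem \ref{loc fp def Serre} to the intersection of the corresponding Serre subcategories, applying Theorem \ref{thm} in both directions, and observing that an intersection of Serre tensor-ideals is again a Serre tensor-ideal, while arbitrary intersections are checked directly on the definable side. Your explicit verification of the top and bottom elements is a minor addition the paper leaves implicit, but the substance of the argument is the same.
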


\begin{proof}
	We must show that a finite union and arbitrary intersection of closed subcategories is closed. Abusing notation slightly, we will write $\mathcal{D} \cap \mathrm{pinj}_{\mathcal{C}}$ for the isomorphism classes of indecomposable pure-injective objects contained in $\mathcal{D}$, that is the closed subset of the Ziegler spectrum corresponding to $\mathcal{D}$. 
	
	We know (since the Ziegler spectrum defines a topology e.g (\cite{P11}, Theorem 14.1)) that given two definable subcategories $\mathcal{D}$ and $\mathcal{D'}$, the definable subcategory generated by their union, $\big{<} \mathcal{D} \cup \mathcal{D}'\big{>}^{\mathrm{def}}$, satisfies \[\big{<} \mathcal{D} \cup \mathcal{D}'\big{>}^{\mathrm{def}} \cap \mathrm{pinj}_{\mathcal{C}}=(\mathcal{D} \cap \mathrm{pinj}_{\mathcal{C}}) \cup (\mathcal{D}' \cap \mathrm{pinj}_{\mathcal{C}}).\] We must show that, if $\mathcal{D}$ and $\mathcal{D}'$ are fp-hom-closed, then so is $\big{<} \mathcal{D} \cup \mathcal{D}'\big{>}^{\mathrm{def}}$. Notice that the Serre subcategory corresponding to $\big{<} \mathcal{D} \cup \mathcal{D}'\big{>}^{\mathrm{def}}$ is given by the intersection of the Serre subcategories corresponding to $\mathcal{D}$ and $\mathcal{D'}$, say $\mathsf{S}_{\mathcal{D}}$ and $\mathsf{S}_{\mathcal{D}'}$ respectively. By Theorem \ref{thm},  $\mathsf{S}_{\mathcal{D}}$ and $\mathsf{S}_{\mathcal{D}'}$ are tensor-ideals so $\mathsf{S}_{\mathcal{D}} \cap \mathsf{S}_{\mathcal{D}'}$ must also be a tensor-ideal. Applying Theorem \ref{thm} again gives that $\big{<} \mathcal{D} \cup \mathcal{D}'\big{>}^{\mathrm{def}}$ is fp-hom-closed. It is straightforward to see that the intersection of fp-hom-closed definable subcategories is fp-hom-closed and this completes the proof. 
\end{proof}

Thus we have the following tensor-analogue of Theorem \ref{loc fp def Serre}.

\begin{cor}
	Let $\mathcal{C}$ be as in Assumption \ref{Rmk C}. There is a bijection between:
	\begin{enumerate}[label=(\roman*)]
		\item the fp-hom-closed definable subcategories of $\mathcal{C}$,
		
		\item the Serre tensor-ideals of $\mathcal{C}^{\mathrm{fp}}\hbox{-}\mathrm{mod}$,
		
		\item the closed subsets of $\mathrm{Zg}(\mathcal{C})^{\mathrm{hom}}$. 	
	\end{enumerate}	
\end{cor}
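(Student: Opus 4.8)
The plan is to obtain all three bijections by restricting the bijections already established in Theorem \ref{loc fp def Serre} to the relevant subclasses, using Theorem \ref{thm} to identify which objects match up under that restriction. Recall that Theorem \ref{loc fp def Serre} provides mutually inverse bijections between the definable subcategories of $\mathcal{C}$, the Serre subcategories of $\mathcal{C}^{\mathrm{fp}}\hbox{-}\mathrm{mod}$, and the closed subsets of $\mathrm{Zg}(\mathcal{C})$, under which a definable subcategory $\mathcal{D}$ corresponds to the Serre subcategory $\mathsf{S}_{\mathcal{D}}=\{F : \overrightarrow{F}(X)=0 \text{ for all } X \in \mathcal{D}\}$ and to the closed subset $\mathcal{D} \cap \mathrm{pinj}_{\mathcal{C}}$.

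For the correspondence between (i) and (ii), I would invoke Theorem \ref{thm}, which states that $\mathcal{D}$ is fp-hom-closed if and only if $\mathsf{S}_{\mathcal{D}}$ is a Serre tensor-ideal. Consequently the bijection of Theorem \ref{loc fp def Serre} between definable subcategories and Serre subcategories restricts to a bijection between the fp-hom-closed definable subcategories and the Serre tensor-ideals of $\mathcal{C}^{\mathrm{fp}}\hbox{-}\mathrm{mod}$. For the correspondence between (i) and (iii), I would appeal to the definition of the fp-hom-closed Ziegler topology established in the preceding theorem: its closed subsets are by construction exactly the subsets of the form $\mathcal{D} \cap \mathrm{pinj}_{\mathcal{C}}$ with $\mathcal{D}$ an fp-hom-closed definable subcategory. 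Hence the assignment $\mathcal{D} \mapsto \mathcal{D} \cap \mathrm{pinj}_{\mathcal{C}}$ is surjective onto the closed subsets of $\mathrm{Zg}(\mathcal{C})^{\mathrm{hom}}$ by definition, and it is injective because the same assignment is already a bijection in Theorem \ref{loc fp def Serre}, so $\mathcal{D}$ is recovered from $\mathcal{D} \cap \mathrm{pinj}_{\mathcal{C}}$. Composing these two restricted bijections then yields the required three-way correspondence.

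The argument is essentially bookkeeping: the entire content sits in the two results being restricted. The only point needing care is that the restrictions are genuinely well-defined, namely that the class of fp-hom-closed definable subcategories is the exact preimage of the Serre tensor-ideals and of the closed sets of $\mathrm{Zg}(\mathcal{C})^{\mathrm{hom}}$. This is precisely what Theorem \ref{thm} supplies on the algebraic side and what the verification that $\mathrm{Zg}(\mathcal{C})^{\mathrm{hom}}$ is a bona fide topology supplies on the topological side; since both are already in hand, the corollary follows formally with no further obstacle. I would close by remarking that this is the promised tensor-analogue of Theorem \ref{loc fp def Serre}, obtained by cutting down each of its three sides to the fp-hom-closed, tensor-ideal, and fp-hom-closed-topology strata respectively.
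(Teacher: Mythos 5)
Your proposal is correct and is exactly the argument the paper intends: the corollary is stated there without an explicit proof, precisely because it follows by restricting the bijections of Theorem \ref{loc fp def Serre} along the equivalence of Theorem \ref{thm} on the algebraic side and along the definition of the closed sets of $\mathrm{Zg}(\mathcal{C})^{\mathrm{hom}}$ on the topological side. Your bookkeeping, including the observation that injectivity of $\mathcal{D} \mapsto \mathcal{D} \cap \mathrm{pinj}_{\mathcal{C}}$ is inherited from the unrestricted bijection, fills in the implicit details faithfully.
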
  
		
\subsection{A rigidity assumption} \label{section rigid}

Next we move on to the context where $\mathcal{C}^{\mathrm{fp}}$ forms a rigid monoidal subcategory. In this setting, we get the following corollary to Theorem \ref{thm}, giving a definable tensor-ideal/Serre tensor-ideal correspondence.

\begin{cor} \label{Cfp rigid case}
	Let $\mathcal{C}$ be a finitely accessible category with products and suppose that $(\mathcal{C},\otimes, 1)$ is a closed symmetric monoidal category such that $\mathcal{C}^{\mathrm{fp}}$ is a symmetric rigid monoidal subcategory. Let $\mathsf{S}$ be a Serre subcategory of $\mathcal{C}^{\mathrm{fp}}\hbox{-}\mathrm{mod}$ and let $\mathcal{D}$ be the corresponding definable subcategory of $\mathcal{C}$ as in (Theorem \ref{loc fp def Serre}).
	
	Then, $\mathsf{S}$ is a Serre tensor-ideal of $\mathcal{C}^{\mathrm{fp}}\hbox{-}\mathrm{mod}$  with respect to the induced tensor product if and only if $\mathcal{D}$ is a definable tensor-ideal of $\mathcal{C}$.
\end{cor}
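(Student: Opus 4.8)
The plan is to deduce the equivalence directly from Theorem \ref{thm}, using rigidity to convert the internal hom into a tensor product. By Theorem \ref{thm}, $\mathsf{S}$ is a Serre tensor-ideal of $\mathcal{C}^{\mathrm{fp}}\hbox{-}\mathrm{mod}$ if and only if $\mathcal{D}$ is fp-hom-closed, so it suffices to prove that under the rigidity hypothesis $\mathcal{D}$ is fp-hom-closed if and only if $\mathcal{D}$ is a definable tensor-ideal of $\mathcal{C}$. Since $\mathcal{D}$ is already assumed definable, the latter condition simply asserts that $\mathcal{D}$ is closed under $C \otimes -$ for every $C \in \mathcal{C}$.

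The key observation is that when $\mathcal{C}^{\mathrm{fp}}$ is rigid, for every $A \in \mathcal{C}^{\mathrm{fp}}$ the dual $A^{\vee}$ again lies in $\mathcal{C}^{\mathrm{fp}}$ and $\mathrm{hom}(A,-) \cong A^{\vee} \otimes -$. Moreover duals are involutive, $(A^{\vee})^{\vee} \cong A$, so as $A$ ranges over $\mathcal{C}^{\mathrm{fp}}$ the object $A^{\vee}$ ranges over all of $\mathcal{C}^{\mathrm{fp}}$ up to isomorphism. Hence the condition ``$\mathrm{hom}(A,X) \in \mathcal{D}$ for all $A \in \mathcal{C}^{\mathrm{fp}}$ and $X \in \mathcal{D}$'' is literally the same as ``$B \otimes X \in \mathcal{D}$ for all $B \in \mathcal{C}^{\mathrm{fp}}$ and $X \in \mathcal{D}$''. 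This gives both directions of the equivalence at the level of finitely presented tensor factors; in particular, any tensor-ideal $\mathcal{D}$ is at once fp-hom-closed.

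For the converse it remains to bootstrap from finitely presented tensor factors to arbitrary ones. Given an fp-hom-closed $\mathcal{D}$, the previous paragraph shows $B \otimes X \in \mathcal{D}$ for all $B \in \mathcal{C}^{\mathrm{fp}}$ and $X \in \mathcal{D}$. For an arbitrary $C \in \mathcal{C}$, write $C = \underrightarrow{\mathrm{lim}}_{i \in I} C_i$ with each $C_i \in \mathcal{C}^{\mathrm{fp}}$. Since the monoidal structure on $\mathcal{C}$ is closed, $- \otimes X \cong X \otimes -$ is a left adjoint and therefore commutes with direct limits, so $C \otimes X \cong \underrightarrow{\mathrm{lim}}_{i \in I}(C_i \otimes X)$. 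Each $C_i \otimes X$ lies in $\mathcal{D}$, and $\mathcal{D}$ is closed under direct limits by definability, whence $C \otimes X \in \mathcal{D}$. Thus $\mathcal{D}$ is a tensor-ideal, completing the equivalence.

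I expect the substantive point to be the reduction in the second paragraph: recognising that rigidity, together with the involutivity of duals, collapses the fp-hom-closed condition to closure under tensoring by finitely presented objects. Once this is in place, the direct-limit argument of the third paragraph is routine, resting only on the commutation of $\otimes$ with direct limits (from closedness) and the closure of $\mathcal{D}$ under direct limits (from definability).
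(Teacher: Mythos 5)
Your proposal is correct and follows essentially the same route as the paper's own proof: reduce via Theorem \ref{thm} to the fp-hom-closed condition, use $\mathrm{hom}(A,-) \cong A^{\vee} \otimes -$ to identify that condition with closure under tensoring by finitely presented objects, and then pass to arbitrary objects of $\mathcal{C}$ by writing them as direct limits of finitely presented ones and using that $- \otimes X$ commutes with direct limits while $\mathcal{D}$ is closed under them. Your explicit appeal to the involutivity of duals, so that $A^{\vee}$ ranges over all of $\mathcal{C}^{\mathrm{fp}}$ up to isomorphism, is a point the paper leaves implicit but is exactly the justification needed.
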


\begin{proof}
	By Theorem \ref{thm} we have that $\mathsf{S}$ is a Serre tensor-ideal if and only if $\mathcal{D}$ is fp-hom-closed. By rigidity of $\mathcal{C}^{\mathrm{fp}}$, there exists a natural equivalence $\mathrm{hom}(A,-) \cong A^{\vee} \otimes -$ for all every $A \in \mathcal{C}^{\mathrm{fp}}$, therefore $\mathcal{D}$ is fp-hom-closed if and only if it is closed under tensoring with objects of $\mathcal{C}^{\mathrm{fp}}$. Suppose $X \in \mathcal{C}$ and $D \in \mathcal{D}$. As $\mathcal{C}$ is finitely accessible we can write $X$ as a direct limit $X=\underrightarrow{\mathrm{lim}}_{i \in I} X_i$ where the $X_i$ are finitely presented.  Therefore, if $\mathcal{D}$ is closed under tensoring with objects of $\mathcal{C}^{\mathrm{fp}}$, then $X \otimes D \cong (\underrightarrow{\mathrm{lim}}_{i \in I} X_i) \otimes D \cong \underrightarrow{\mathrm{lim}}_{i \in I} (X_i \otimes D) \in \mathcal{D}$, as $- \otimes D$ commutes with direct limits and $\mathcal{D}$ is closed under direct limits. 
\end{proof}

\subsection{Elementary duality} \label{section duality}

Throughout this section assume $\mathcal{A}$ is a small preadditive category with an additive rigid monoidal structure. We show that elementary duality (see Proposition \ref{prop background def dual}) maps fp-hom-closed definable subcategories of $\mathrm{Mod}\hbox{-}\mathcal{A}$ to definable tensor-ideals of $\mathcal{A}\hbox{-}\mathrm{Mod}$. 

\begin{notation}
	We will denote the monoidal structure on $\mathcal{A}$ by $\otimes$, while $\otimes_{\mathcal{A}}$ denotes the tensor product of $\mathcal{A}$-modules given in Definition \ref{def tensor over A}.
\end{notation} 

\begin{definition}
	Given a finitely presented right $\mathcal{A}$-module $M \in \mathrm{mod}\hbox{-}\mathcal{A}$ with presentation \[(-,m_1) \xrightarrow{(-,m)} (-,m_2) \to M \to 0\] where $m:m_1 \to m_2$ is a morphism in $\mathcal{A}$, define (up to isomorphism) the finitely presented left $\mathcal{A}$-module $M^d \in \mathcal{A}\hbox{-}\mathrm{mod}$ to have presentation \[(m_1^{\vee},-) \xrightarrow{(m^{\vee},-)} (m_2^{\vee},-) \to M^d \to 0,\] where $m^{\vee}:m_2^{\vee} \to m_1^{\vee}$ is the dual morphism to $m$ in $\mathcal{A}$.
	
	Similarly, given a finitely presented left $\mathcal{A}$-module $N \in \mathcal{A}\hbox{-}\mathrm{mod}$ with presentation \[(n_2,-) \xrightarrow{(n,-)} (n_1,-) \to N \to 0\] where $n:n_1 \to n_2$ is a morphism in $\mathcal{A}$, define (up to isomorphism) the finitely presented right $\mathcal{A}$-module $N^d \in \mathrm{mod}\hbox{-}\mathcal{A}$ to have presentation \[(-,n_2^{\vee}) \xrightarrow{(-,n^{\vee})} (-,n_1^{\vee}) \to N^d \to 0,\] where $n^{\vee}:n_2^{\vee} \to n_1^{\vee}$ is the dual morphism to $n$ in $\mathcal{A}$.  
\end{definition}

\begin{prop} \label{prop duality}
	Let $\mathcal{A}$ be a small preadditive category with an additive symmetric rigid monoidal structure and induce monoidal structures on $\mathcal{A}\hbox{-}\mathrm{Mod}$ and $\mathrm{Mod}\hbox{-}\mathcal{A}$ via Day convolution product.
	
	The maps $(-)^d:\mathcal{A}\hbox{-}\mathrm{mod} \leftrightarrow \mathrm{mod}\hbox{-}\mathcal{A}$ give an equivalence between $\mathcal{A}\hbox{-}\mathrm{mod}$ and $\mathrm{mod}\hbox{-}\mathcal{A}$.  
\end{prop}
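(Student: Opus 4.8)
The plan is to realise the assignment $(-)^d$ as the restriction to finitely presented modules of precomposition with the dualisation equivalence; this simultaneously supplies functoriality, independence of the chosen presentation, and the equivalence itself. The starting point is that dualisation is an equivalence $\mathcal{A}^{\mathrm{op}} \xrightarrow{\sim} \mathcal{A}$: since $\mathcal{A}$ is rigid, $A \mapsto A^{\vee}$, $f \mapsto f^{\vee}$ is a covariant functor $E \colon \mathcal{A}^{\mathrm{op}} \to \mathcal{A}$ (using $(g \circ f)^{\vee} = f^{\vee} \circ g^{\vee}$), and the double-dual isomorphism $A^{\vee \vee} \cong A$ makes $E$ an equivalence with quasi-inverse the analogous functor $E' \colon \mathcal{A} \to \mathcal{A}^{\mathrm{op}}$.

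Precomposition with $E'$ then defines a functor $(E')^{*} \colon \mathrm{Mod}\hbox{-}\mathcal{A} = (\mathcal{A}^{\mathrm{op}}, \mathbf{Ab}) \to (\mathcal{A}, \mathbf{Ab}) = \mathcal{A}\hbox{-}\mathrm{Mod}$, $M \mapsto M \circ E'$, which is an equivalence because $E'$ is, with quasi-inverse $E^{*}$. As an equivalence it preserves cokernels and carries representables to representables up to isomorphism, so it restricts to an equivalence $\mathrm{mod}\hbox{-}\mathcal{A} \to \mathcal{A}\hbox{-}\mathrm{mod}$ on the finitely presented objects.

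It then remains to identify $(E')^{*}$ with $(-)^d$. Evaluating, $(E')^{*}(-, B)$ is the functor $A \mapsto \mathcal{A}(A^{\vee}, B)$; using the rigidity isomorphism $\mathrm{hom}(C, -) \cong C^{\vee} \otimes -$, the tensor-hom adjunction with the unit $1$, and symmetry, I would produce a natural isomorphism $\mathcal{A}(A^{\vee}, B) \cong \mathcal{A}(1, A \otimes B) \cong \mathcal{A}(1, B \otimes A) \cong \mathcal{A}(B^{\vee}, A)$, natural in both $A$ and $B$. Thus $(E')^{*}(-, B) \cong (B^{\vee}, -)$, and naturality in $B$ shows that the morphism of representables induced by $m \colon m_1 \to m_2$ is carried to the one induced by $m^{\vee} \colon m_2^{\vee} \to m_1^{\vee}$. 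Applying $(E')^{*}$ to the presentation $(-, m_1) \xrightarrow{(-,m)} (-, m_2) \to M \to 0$ and using that $(E')^{*}$ preserves cokernels therefore yields $(E')^{*}(M) \cong M^d$ for every $M \in \mathrm{mod}\hbox{-}\mathcal{A}$; symmetrically $E^{*}(N) \cong N^d$. This realises $(-)^d$ in both directions as the mutually quasi-inverse equivalences $(E')^{*}$ and $E^{*}$.

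The main obstacle is the naturality bookkeeping around the opposite-category conventions: one must verify that the isomorphism $\mathcal{A}(A^{\vee}, B) \cong \mathcal{A}(B^{\vee}, A)$ is natural in each variable and, in particular, that in the $B$-variable it intertwines post-composition by $m$ with pre-composition by $m^{\vee}$, so that the identification of representables promotes to an identification of entire presentations. The apparent worry that $(-)^d$ might depend on the chosen presentation dissolves once it is identified with $(E')^{*}$, which is manifestly presentation-independent.
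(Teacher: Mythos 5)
Your proof is correct, but it takes a genuinely different route from the paper's. The paper works entirely with presentations: having fixed a presentation of each finitely presented module, it defines $(-)^d$ on a morphism $h:N \to N'$ by lifting $h$ (via projectivity of representables) to morphisms $h_1,h_2$ between the presenting objects, dualising those, and taking the induced map on cokernels; it then checks by hand that the result is independent of the chosen lifts, that functoriality of $(-)^{\vee}$ gives functoriality of $(-)^d$, and that the natural isomorphism $1_{\mathcal{A}} \to ((-)^{\vee})^{\vee}$ makes the two constructions mutually quasi-inverse. You instead observe that rigidity together with symmetry makes dualisation an equivalence $E':\mathcal{A} \to \mathcal{A}^{\mathrm{op}}$ (with quasi-inverse $E$), so that precomposition $(E')^{*}:\mathrm{Mod}\hbox{-}\mathcal{A} \to \mathcal{A}\hbox{-}\mathrm{Mod}$ is an equivalence restricting to finitely presented objects, and you then identify this restriction with $(-)^d$ by computing $(E')^{*}(-,B) \cong (B^{\vee},-)$ naturally in $B$ and pushing the identification through presentations. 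This shifts the work: the paper's lifting-and-checking argument is replaced by the naturality bookkeeping for $\mathcal{A}(A^{\vee},B) \cong \mathcal{A}(B^{\vee},A)$, which you correctly flag as the delicate point; in exchange, functoriality, presentation-independence and the quasi-inverse property all come for free, and you obtain the stronger statement that the equivalence already exists at the level of the big module categories $\mathrm{Mod}\hbox{-}\mathcal{A} \simeq \mathcal{A}\hbox{-}\mathrm{Mod}$, of which the proposition is the restriction. One simplification is available to you: since dualisation is an equivalence, hence fully faithful, the required isomorphism follows directly as $\mathcal{A}(A^{\vee},B) \cong \mathcal{A}(B^{\vee},A^{\vee\vee}) \cong \mathcal{A}(B^{\vee},A)$, avoiding the detour through $\mathcal{A}(1,A \otimes B)$ and the explicit use of the tensor-hom adjunction.
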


\begin{proof}
	Fix a presentation for each $N \in \mathcal{A}\hbox{-}\mathrm{mod}$. First let us show that $(-)^d:\mathcal{A}\hbox{-}\mathrm{mod} \to \mathrm{mod}\hbox{-}\mathcal{A}$ is functorial. Suppose $h:N \to N'$ is a morphism in $\mathcal{A}\hbox{-}\mathrm{mod}$ where $N$ and $N'$ have presentations \[ (n_2,-) \xrightarrow{(n,-)} (n_1,-) \to N \to 0 \] and  \[ (n'_2,-) \xrightarrow{(n',-)} (n'_1,-) \to N' \to 0 \]  respectively. 
	
	By projectivity of representables we can choose $h_1:n'_1 \to n_1$ and $h_2:n'_2 \to n_2$ such that the following diagram commutes.
	
	\begin{tikzpicture}
	\matrix (m) [matrix of math nodes,row sep=4em,column sep=4em,minimum width=2em]
	{
		(n_2,-) & (n_1,-) & N & 0  \\
		(n'_2,-) & (n'_1,-) & N' & 0 \\};
	\path[-stealth]
	(m-1-1) edge node [above] {$(n,-)$} (m-1-2)
	(m-1-2) edge (m-1-3)
	(m-1-3) edge (m-1-4)
	(m-2-1) edge node [below] {$(n',-)$} (m-2-2)
	(m-2-2) edge (m-2-3)
	(m-2-3) edge (m-2-4)
	
	(m-1-1) edge node [left] {$(h_2,-)$} (m-2-1)
	(m-1-2) edge node [left] {$(h_1,-)$} (m-2-2)
	(m-1-3) edge node [right] {$h$} (m-2-3);
	\end{tikzpicture}
	
	Thus, $n \circ h_1=h_2 \circ n'$ and dualising we get $h_1^{\vee} \circ n^{\vee}=n'^{\vee} \circ h_2^{\vee}$. Therefore we have the following commutative diagram where the map $h^d$ is uniquely determined.
	
	\begin{tikzpicture}
	\matrix (m) [matrix of math nodes,row sep=4em,column sep=4em,minimum width=2em]
	{
		(-,{n_2}^{\vee}) & (-,{n_1}^{\vee}) & N^d & 0  \\
		(-,{n'_2}^{\vee}) & (-,{n'_1}^{\vee}) & N'^d & 0 \\};
	\path[-stealth]
	(m-1-1) edge node [above] {$(-,n^{\vee})$} (m-1-2)
	(m-1-2) edge (m-1-3)
	(m-1-3) edge (m-1-4)
	(m-2-1) edge node [below] {$(-,n'^{\vee})$} (m-2-2)
	(m-2-2) edge (m-2-3)
	(m-2-3) edge (m-2-4)
	
	(m-1-1) edge node [left] {$(-,h_2^{\vee})$} (m-2-1)
	(m-1-2) edge node [left] {$(-,h_1^{\vee})$} (m-2-2)
	(m-1-3) edge [dashed] node [right] {$h^d$} (m-2-3);
	\end{tikzpicture}	 
	
	It is straightforward to check that any choice of $h_1$ and $h_2$ induce the same map $h^d$ and functoriality of $(-)^{\vee}:\mathcal{A} \to \mathcal{A}$ implies functoriality of $(-)^d$. So (given a choice of presentation for all $N \in \mathcal{A}\hbox{-}\mathrm{mod}$) we have a well-defined functor, $(-)^d:\mathcal{A}\hbox{-}\mathrm{mod} \to \mathrm{mod}\hbox{-}\mathcal{A}$. Furthermore, since we have a natural isomorphism $1_{\mathcal{A}} \to ((-)^{\vee})^{\vee}$, by construction,  the functor $(-)^d:\mathrm{mod}\hbox{-}\mathcal{A} \to \mathcal{A}\hbox{-}\mathrm{mod}$ (fixing a presentation for each $N \in \mathcal{A}\hbox{-}\mathrm{mod}$) clearly gives a quasi-inverse.  
\end{proof}

\begin{lemma} \label{lemma otimes A Md}
	For every $M \in \mathrm{mod}\hbox{-}\mathcal{A}$, $L \in \mathrm{Mod}\hbox{-}\mathcal{A}$ and $N \in \mathcal{A}\hbox{-}\mathrm{Mod}$, we have an isomorphism \[ (L \otimes M) \otimes_{\mathcal{A}} N \cong L \otimes_{\mathcal{A}} (M^d \otimes N),\] natural in $L$ and $N$. 
\end{lemma}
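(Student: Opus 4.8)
The plan is to establish the isomorphism first when $M$, $N$ and $L$ are all representable, where both sides are directly computable, and then to extend to arbitrary modules by right-exactness and preservation of colimits. Throughout I will use two facts from Section \ref{Day convolution product}: that Day convolution sends representables to representables, so that $(A,-) \otimes (B,-) \cong (A \otimes B,-)$ in $\mathcal{A}\hbox{-}\mathrm{Mod}$ and likewise $(-,A) \otimes (-,B) \cong (-,A \otimes B)$ in $\mathrm{Mod}\hbox{-}\mathcal{A}$ (the latter because $\mathcal{A}^{\mathrm{op}}$ is again symmetric monoidal); and the defining property $P \otimes_{\mathcal{A}} (A,-) \cong P(A)$ of Definition \ref{def tensor over A}.

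For the representable case take $M = (-,C)$, $N = (D,-)$ and $L = (-,E)$ with $C,D,E \in \mathcal{A}$. On the left, $L \otimes M \cong (-, E \otimes C)$, and hence $(L \otimes M) \otimes_{\mathcal{A}} N \cong (-, E \otimes C)(D) = \mathcal{A}(D, E \otimes C)$. On the right, reading off the presentation of $(-,C)$ gives $M^d \cong (C^\vee,-)$, so $M^d \otimes N \cong (C^\vee \otimes D, -)$ and therefore $L \otimes_{\mathcal{A}} (M^d \otimes N) \cong (-,E)(C^\vee \otimes D) = \mathcal{A}(C^\vee \otimes D, E)$. It thus remains to produce a natural isomorphism $\mathcal{A}(D, E \otimes C) \cong \mathcal{A}(C^\vee \otimes D, E)$, and this is precisely the adjunction isomorphism expressing $C^\vee \otimes -$ as left adjoint of $C \otimes -$ (the rigidity adjunction), composed with the symmetry isomorphism $C \otimes E \cong E \otimes C$. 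Each of these isomorphisms is natural in $C$, $D$ and $E$, so the desired isomorphism holds on representables and is natural in all three representing objects.

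For the extension I note that each side is built from functors that preserve the relevant colimits. Viewed as a functor of $N$, the right-hand side is the composite of the cocontinuous left adjoint $M^d \otimes -$ (Day convolution is closed) with $L \otimes_{\mathcal{A}} -$, which is right exact and commutes with direct limits, while the left-hand side $(L \otimes M) \otimes_{\mathcal{A}} -$ is again right exact and cocontinuous. Since every $N \in \mathcal{A}\hbox{-}\mathrm{Mod}$ has a presentation $\bigoplus_j (A_j,-) \to \bigoplus_i (A_i,-) \to N \to 0$ by coproducts of representables, the natural isomorphism on representables extends uniquely to all $N$ by comparing cokernels. The same argument in the variable $L$, using that $- \otimes M$ and $- \otimes_{\mathcal{A}} N$ are cocontinuous, extends it to arbitrary $L$. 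Finally, to pass from representable $M$ to a general $M \in \mathrm{mod}\hbox{-}\mathcal{A}$ I use that both sides are right exact in $M$ (invoking the functoriality and exactness of $(-)^d$ from Proposition \ref{prop duality}) and agree naturally on the representables in a finite presentation of $M$, hence agree on $M$.

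The conceptual heart, and the only step using the hypotheses essentially, is the representable computation: the identification $(-,C)^d \cong (C^\vee,-)$ together with the rigidity adjunction and symmetry yielding $\mathcal{A}(D, E \otimes C) \cong \mathcal{A}(C^\vee \otimes D, E)$. The extensions are routine once cocontinuity and right-exactness of each functor in each variable are checked; the main piece of bookkeeping is ensuring that the isomorphism assembled from these extensions is \emph{simultaneously} natural in $L$ and $N$, which follows from the uniqueness in the cokernel-comparison step.
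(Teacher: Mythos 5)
Your proof is correct, but it is organised differently from the paper's. The paper never decomposes $N$: it first reduces $L$ to a finitely presented module (using cocontinuity of $-\otimes_{\mathcal{A}}N$ and $-\otimes(-,a)$), takes a presentation $(-,l_1)\to(-,l_2)\to L\to 0$, and then, for representable $M=(-,a)$ and \emph{arbitrary} $N$, obtains the isomorphism from the chain $(-,l_i\otimes a)\otimes_{\mathcal{A}}N \cong ((l_i\otimes a,-),N)\cong ((l_i,-),(a^{\vee},-)\otimes N)\cong (-,l_i)\otimes_{\mathcal{A}}((a^{\vee},-)\otimes N)$, where the middle step is the adjunction between $(a,-)\otimes-$ and $(a^{\vee},-)\otimes-$ on $\mathcal{A}\hbox{-}\mathrm{Mod}$ supplied by Lemma \ref{(x,-) otimes - exact}; the final passage from representable $M$ to general $M$ via a presentation and dual morphisms is the same in both arguments. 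Your route instead reduces all three variables to representables, where the identity becomes the rigidity adjunction $\mathcal{A}(C^{\vee}\otimes D,E)\cong\mathcal{A}(D,C\otimes E)$ in $\mathcal{A}$ itself; note that evaluating the paper's middle step at $N=(D,-)$ recovers exactly your isomorphism, so the paper's argument is the ``one level up'' version that handles all $N$ at once and thereby avoids your extra extension step in $N$. What your approach buys is elementarity: you need only rigidity of $\mathcal{A}$, not the fact that $(a,-)$ is rigid in $\mathcal{A}\hbox{-}\mathrm{Mod}$ under Day convolution. The cost is more colimit bookkeeping, in particular the simultaneous-naturality check you flag, and one point deserving emphasis: the ``naturality in $C$'' you assert for the rigidity adjunction is the compatibility of the adjunction isomorphisms with dual morphisms $c^{\vee}$, which is precisely the ``properties of dual morphisms'' square the paper verifies explicitly before comparing cokernels; it is the one naturality statement that is not formal, since it involves the contravariant assignment $c\mapsto c^{\vee}$, and your $M$-extension step genuinely depends on it.
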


\begin{proof}
	First let us prove that for every $a \in \mathcal{A}$, we have an isomorphism $(L \otimes (-,a)) \otimes_{\mathcal{A}} N \cong L \otimes_{\mathcal{A}} ((a^{\vee}, -) \otimes N)$ which is natural in $N$ and $L$. Recall that $-\otimes_{\mathcal{A}} N$ and $- \otimes (-,a)$ are both right exact and therefore preserve direct limits. Therefore, as $\mathrm{Mod}\hbox{-}\mathcal{A}$ is locally finitely presentable, it is sufficient to assume that $L$ is finitely presented. Suppose $L$ has presentation $(-,l_1) \xrightarrow{(-,l)} (-,l_2) \to L \to 0$. By right exactness of $-\otimes_{\mathcal{A}} N$ and $- \otimes (-,a)$ we have an exact sequence \[ (-,l_1 \otimes a) \otimes_{\mathcal{A}} N \xrightarrow{(-,l \otimes a) \otimes_{\mathcal{A}} N} (-,l_2 \otimes a) \otimes_{\mathcal{A}} N \to (L \otimes (-,a)) \otimes_{\mathcal{A}} N \to 0. \]  By definition of $\otimes_{\mathcal{A}}$, $(-,l \otimes a) \otimes_{\mathcal{A}} N:(-,l_1 \otimes a) \otimes_{\mathcal{A}} N \to (-,l_2 \otimes a) \otimes_{\mathcal{A}} N$ is given by $N(l \otimes a):N(l_1 \otimes a) \to N(l_2 \otimes a)$. Thus by the Yoneda lemma we have the following commutative diagram in $\mathbf{Ab}$.
	
	\begin{tikzpicture}
	\matrix (m) [matrix of math nodes,row sep=4em,column sep=6em,minimum width=2em]
	{
		(-,l_1 \otimes a) \otimes_{\mathcal{A}} N & (-,l_2 \otimes a) \otimes_{\mathcal{A}} N \\
		((l_1 \otimes a,-),N) & ((l_2 \otimes a,-),N) \\};
	\path[-stealth]
	(m-1-1) edge node [above] {$(-,l \otimes a) \otimes_{\mathcal{A}} N$} (m-1-2)
	(m-2-1) edge node [below] {$((l \otimes a,-),N)$} (m-2-2)
	(m-1-1) edge node [left] {$\cong$} (m-2-1)
	(m-1-2) edge node [left] {$\cong$} (m-2-2);
	\end{tikzpicture}
	
	By considering Lemma \ref{(x,-) otimes - exact}, we see that $(a^{\vee},-) \otimes -:\mathcal{A}\hbox{-}\mathrm{Mod} \to \mathcal{A}\hbox{-}\mathrm{Mod}$ is right adjoint to $(a,-) \otimes -:\mathcal{A}\hbox{-}\mathrm{Mod} \to \mathcal{A}\hbox{-}\mathrm{Mod}$. Thus we have the following commutative diagram where the first row of downwards arrows is given by the adjointness isomorphisms and the second row is given by the Yoneda lemma.

	\begin{tikzpicture}
	\matrix (m) [matrix of math nodes,row sep=4em,column sep=8em,minimum width=2em]
	{	((l_1 \otimes a,-),N) & ((l_2 \otimes a,-),N) \\
		((l_1,-),(a^{\vee},-) \otimes N) & ((l_2,-),(a^{\vee},-) \otimes N) \\
		((a^{\vee},-) \otimes N)(l_1) & ((a^{\vee},-) \otimes N)(l_2) \\};
	\path[-stealth]
	(m-1-1) edge node [above] {$((l \otimes a,-),N)$} (m-1-2)
	(m-2-1) edge node [above] {$((l,-), (a^{\vee},-) \otimes N)$} (m-2-2)
	(m-1-1) edge node [left] {$\cong$} (m-2-1)
	(m-1-2) edge node [left] {$\cong$} (m-2-2)
	(m-3-1) edge node [below] {$((a^{\vee},-) \otimes N)(l)$} (m-3-2)
	(m-2-1) edge node [left] {$\cong$} (m-3-1)
	(m-2-2) edge node [left] {$\cong$} (m-3-2);
	\end{tikzpicture}
	
	By the definition of $\otimes_{\mathcal{A}}$ we have $((a^{\vee},-) \otimes N)(l)=(-,l) \otimes_{\mathcal{A}} ((a^{\vee},-) \otimes N)$. Furthermore, by right exactness of $- \otimes_{\mathcal{A}} ((a^{\vee},-) \otimes N)$ we have exact sequence \[(-,l_1) \otimes_{\mathcal{A}} ((a^{\vee},-) \otimes N) \xrightarrow{(-,l) \otimes_{\mathcal{A}} ((a^{\vee},-) \otimes N)} (-,l_2) \otimes_{\mathcal{A}} ((a^{\vee},-) \otimes N) \to L \otimes_{\mathcal{A}} ((a^{\vee},-) \otimes N) \to 0.\] Thus we have an induced isomorphism $(L \otimes (-,a)) \otimes_{\mathcal{A}} N \cong L \otimes_{\mathcal{A}} ((a^{\vee}, -) \otimes N)$ as shown on the commutative diagram below.  
	
	\noindent \begin{tikzpicture}
	\matrix (m) [matrix of math nodes,row sep=4em,column sep=2em,minimum width=2em]
	{		(-,l_1 \otimes a) \otimes_{\mathcal{A}} N & & (-,l_2 \otimes a) \otimes_{\mathcal{A}} N & (L \otimes (-,a)) \otimes_{\mathcal{A}} N & 0 \\
		((l_1 \otimes a,-),N) & & ((l_2 \otimes a,-),N) & & \\
		((l_1,-),(a^{\vee},-) \otimes N) & & ((l_2,-),(a^{\vee},-) \otimes N) & & \\
		(-,l_1) \otimes_{\mathcal{A}} 	((a^{\vee},-) \otimes N) & & (-,l_2) \otimes_{\mathcal{A}} ((a^{\vee},-) \otimes N) & L \otimes_{\mathcal{A}} ((a^{\vee},-) \otimes N) & 0 \\};
	\path[-stealth]
	(m-1-1) edge node [above] {$(-,l \otimes a) \otimes_{\mathcal{A}} N$} (m-1-3)
	(m-1-3) edge (m-1-4)
	(m-1-4) edge (m-1-5)
	(m-2-1) edge node [above] {$((l \otimes a,-),N)$} (m-2-3)
	(m-3-1) edge node [above] {$((l,-), (a^{\vee},-) \otimes N)$} (m-3-3)
	(m-1-1) edge node [left] {$\cong$} (m-2-1)
	(m-1-3) edge node [left] {$\cong$} (m-2-3)
	(m-4-1) edge node [above] {$(-,l) \otimes_{\mathcal{A}} ((a^{\vee},-) \otimes N)$} (m-4-3)
	(m-2-1) edge node [left] {$\cong$} (m-3-1)
	(m-2-3) edge node [left] {$\cong$} (m-3-3)
	(m-3-1) edge node [left] {$\cong$} (m-4-1)
	(m-3-3) edge node [left] {$\cong$} (m-4-3)
	(m-4-3) edge (m-4-4)
	(m-4-4) edge (m-4-5)
	(m-1-4) edge [dashed] (m-4-4);
	\end{tikzpicture}
	
	As each of the isomorphisms in the first and second columns are natural in $(-,l_i)$ and $N$, the induced isomorphism is natural in $L$ and $N$. Furthermore, by properties of dual morphisms in $\mathcal{A}$ we have that for every $m:m_1 \to m_2$ in $\mathcal{A}$ the following square commutes for i=1,2.

	\begin{tikzpicture}
	\matrix (m) [matrix of math nodes,row sep=4em,column sep=8em,minimum width=2em]
	{	((l_i \otimes m_1,-),N) & ((l_i \otimes m_2,-),N) \\
		((l_i,-),(m_1^{\vee},-) \otimes N) & ((l_i,-),(m_2^{\vee},-) \otimes N) \\};
	\path[-stealth]
	(m-1-1) edge node [above] {$((l_i \otimes m,-),N)$} (m-1-2)
	(m-2-1) edge node [above] {$((l_i,-), (m^{\vee},-) \otimes N)$} (m-2-2)
	(m-1-1) edge node [left] {$\cong$} (m-2-1)
	(m-1-2) edge node [left] {$\cong$} (m-2-2);
	\end{tikzpicture}  
	
	Therefore the induced isomorphisms $(L \otimes (-,m_i)) \otimes_{\mathcal{A}} N \cong L \otimes_{\mathcal{A}} ((m_i^{\vee}, -) \otimes N)$ for $i=1,2$ commute with any morphism $m:m_1 \to m_2$ in $\mathcal{A}$ in the following sense. 
	
	\noindent	 	 \begin{tikzpicture}
	\matrix (m) [matrix of math nodes,row sep=4em,column sep=4em,minimum width=2em]
	{	(L \otimes (-,m_1)) \otimes_{\mathcal{A}} N & (L \otimes (-,m_2))  \otimes_{\mathcal{A}} N  & (L \otimes M)  \otimes_{\mathcal{A}} N & 0 \\
		L   \otimes_{\mathcal{A}} ((m_1^{\vee},-) \otimes N) & L \otimes_{\mathcal{A}} ((m_2^{\vee},-)  \otimes N)  & L \otimes_{\mathcal{A}} (M^d \otimes N) & 0 \\};
	\path[-stealth]
	(m-1-1) edge node [above] {$(L \otimes (-,m)) \otimes_{\mathcal{A}} N$} (m-1-2)
	(m-2-1) edge node [above] {$L \otimes_{\mathcal{A}} ((m^{\vee},-) \otimes N)$} (m-2-2)
	(m-1-2) edge (m-1-3)
	(m-1-3) edge (m-1-4)
	(m-2-2) edge (m-2-3)
	(m-2-3) edge (m-2-4)
	(m-1-1) edge node [left] {$\cong$} (m-2-1)
	(m-1-2) edge node [left] {$\cong$} (m-2-2)
	(m-1-3) edge [dashed] node [left] {$\cong$} (m-2-3);
	\end{tikzpicture} 
	
	Hence the desired isomorphism $(L \otimes M) \otimes_{\mathcal{A}} N \cong L \otimes_{\mathcal{A}} (M^d \otimes N)$ is determined uniquely by the commutative diagram shown above. 
\end{proof}

\begin{theorem}
	Let $\mathcal{A}$ be a small preadditive category with an additive, symmetric, rigid, monoidal structure and induce monoidal structures on $\mathcal{A}\hbox{-}\mathrm{Mod}$ and $\mathrm{Mod}\hbox{-}\mathcal{A}$ via Day convolution product. 
	
	A definable subcategory $\mathcal{D} \subseteq \mathrm{Mod}\hbox{-}\mathcal{A}$ is fp-hom-closed if and only if the dual definable subcategory $\delta \mathcal{D} \subseteq \mathcal{A}\hbox{-}\mathrm{Mod}$ is a tensor-ideal. 
\end{theorem}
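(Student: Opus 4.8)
The plan is to translate both sides into statements about the Serre subcategory $\mathsf{S} \subseteq (\mathrm{mod}\hbox{-}\mathcal{A},\mathbf{Ab})^{\mathrm{fp}}$ corresponding to $\mathcal{D}$, and then to play them off against one another through elementary duality. Applying Theorem \ref{thm} with $\mathcal{C} = \mathrm{Mod}\hbox{-}\mathcal{A}$ (whose finitely presented objects are $\mathrm{mod}\hbox{-}\mathcal{A}$) immediately disposes of the left-hand condition: $\mathcal{D}$ is fp-hom-closed if and only if $\mathsf{S}$ is a Serre tensor-ideal of $(\mathrm{mod}\hbox{-}\mathcal{A},\mathbf{Ab})^{\mathrm{fp}}$. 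So it remains to prove that $\mathsf{S}$ is a Serre tensor-ideal if and only if $\delta\mathcal{D} \subseteq \mathcal{A}\hbox{-}\mathrm{Mod}$ is a tensor-ideal.

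The engine of the argument is an annihilator description of membership in $\delta\mathcal{D}$ that transports a tensor factor across the duality. For $F_f \in \mathsf{S}$ with $f\colon A \to B$ in $\mathrm{mod}\hbox{-}\mathcal{A}$, the copresentation $0 \to \delta F_f \to A \otimes_{\mathcal{A}} - \xrightarrow{f \otimes_{\mathcal{A}} -} B \otimes_{\mathcal{A}} -$ of Theorem \ref{thm background duality}, together with the fact that $-\otimes_{\mathcal{A}}-$ (Definition \ref{def tensor over A}) and kernels both commute with direct limits, shows that $\overrightarrow{\delta F_f}(N') = \ker(f \otimes_{\mathcal{A}} N')$ for every $N' \in \mathcal{A}\hbox{-}\mathrm{Mod}$; hence $N' \in \delta\mathcal{D}$ precisely when $f \otimes_{\mathcal{A}} N'$ is monic for all $F_f \in \mathsf{S}$. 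I would then fix $X \in \mathcal{A}\hbox{-}\mathrm{mod}$ and apply Lemma \ref{lemma otimes A Md} with $L = A$, $M = X^d$ and $N$ our module, using $(X^d)^d \cong X$ from Proposition \ref{prop duality}; naturality of that isomorphism in the first variable identifies $f \otimes_{\mathcal{A}} (X \otimes N)$ with $(f \otimes X^d) \otimes_{\mathcal{A}} N$, so $\overrightarrow{\delta F_f}(X \otimes N) \cong \overrightarrow{\delta F_{f \otimes X^d}}(N)$. Since by symmetry and the Day-convolution identity $(C,-) \otimes F_f = F_{C \otimes f}$ we have $F_{f \otimes X^d} \cong (X^d,-) \otimes F_f$, this yields the key equivalence: for finitely presented $X$, one has $X \otimes N \in \delta\mathcal{D}$ if and only if $\overrightarrow{\delta\big((X^d,-) \otimes F_f\big)}(N) = 0$ for all $F_f \in \mathsf{S}$.

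Both implications are then short. For the forward direction, if $\mathsf{S}$ is a tensor-ideal then $(X^d,-) \otimes F_f \in \mathsf{S}$, so its dual lies in $\delta\mathsf{S}$ and annihilates every $N \in \delta\mathcal{D}$; the equivalence gives $X \otimes N \in \delta\mathcal{D}$ for finitely presented $X$, and hence for arbitrary $X \in \mathcal{A}\hbox{-}\mathrm{Mod}$ by writing $X$ as a direct limit of finitely presented objects and using that $\delta\mathcal{D}$ is closed under direct limits and that $-\otimes N$ commutes with them. Conversely, if $\delta\mathcal{D}$ is a tensor-ideal then $X \otimes N \in \delta\mathcal{D}$ for all finitely presented $X$ and all $N \in \delta\mathcal{D}$, so $\overrightarrow{\delta\big((X^d,-) \otimes F_f\big)}$ annihilates all of $\delta\mathcal{D}$; since $\delta\mathsf{S}$ is exactly the Serre subcategory annihilating $\delta\mathcal{D}$ (Theorem \ref{loc fp def Serre} and Proposition \ref{prop background def dual}) and $\delta$ maps $\mathsf{S}$ bijectively onto $\delta\mathsf{S}$, this forces $(X^d,-) \otimes F_f \in \mathsf{S}$. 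As $X$ runs through $\mathcal{A}\hbox{-}\mathrm{mod}$ its dual $X^d$ runs through all of $\mathrm{mod}\hbox{-}\mathcal{A}$ by Proposition \ref{prop duality}, so $\mathsf{S}$ is closed under tensoring with every representable $(C,-)$, $C \in \mathrm{mod}\hbox{-}\mathcal{A}$; Lemma \ref{definable tensor ideal respresentables} then upgrades this to $\mathsf{S}$ being a tensor-ideal.

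The step I expect to demand the most care is the reduction of the tensor-ideal condition on $\delta\mathcal{D}$ to finitely presented tensor factors rather than to the representables $(a,-)$, $a \in \mathcal{A}$. Descending all the way to representables would be tempting given the rigidity of $\mathcal{A}$, but it is unavailable: $\delta\mathcal{D}$ is only definable, hence need not be closed under the cokernels coming from a presentation of a finitely presented module, so one cannot pass from finitely presented $X$ to representables inside $\delta\mathcal{D}$. What saves the argument is that stopping at finitely presented $X$ is exactly the right level, because the equivalence $(-)^d\colon \mathcal{A}\hbox{-}\mathrm{mod} \to \mathrm{mod}\hbox{-}\mathcal{A}$ converts ``tensoring with every finitely presented $X$'' into ``tensoring with every representable $(C,-)$ of $(\mathrm{mod}\hbox{-}\mathcal{A},\mathbf{Ab})^{\mathrm{fp}}$'', which is precisely the hypothesis of Lemma \ref{definable tensor ideal respresentables}. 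Checking the formula $\overrightarrow{\delta F_f}(N') = \ker(f \otimes_{\mathcal{A}} N')$ and that the isomorphism of Lemma \ref{lemma otimes A Md} is natural in the first variable in the way used are the remaining routine verifications.
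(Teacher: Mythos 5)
Your proposal is correct and follows essentially the same route as the paper's own proof: Theorem \ref{thm} to convert fp-hom-closedness of $\mathcal{D}$ into $\mathsf{S}$ being a Serre tensor-ideal, the kernel description $\overrightarrow{\delta F_f}(N)=\ker(f\otimes_{\mathcal{A}}N)$ of membership in $\delta\mathcal{D}$, Lemma \ref{lemma otimes A Md} together with the equivalence $(-)^d$ of Proposition \ref{prop duality} to transport tensor factors across the duality, and finally Lemma \ref{definable tensor ideal respresentables} plus the direct-limit argument to pass between finitely presented tensor factors and the full tensor-ideal condition. The only differences are organizational --- you split the argument into two implications with the annihilator characterization of $\delta\mathsf{S}$ made explicit, where the paper runs a single chain of equivalences --- so this matches the paper's proof in substance.
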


\begin{proof}
	By Theorem \ref{thm}, $\mathcal{D} \subseteq \mathrm{Mod}\hbox{-}\mathcal{A}$ is an fp-hom-closed definable subcategory if and only if the corresponding Serre subcategory $\mathsf{S} \subseteq (\mathrm{mod}\hbox{-}\mathcal{A},\mathrm{Ab})^{\mathrm{fp}}$ is a tensor-ideal. 
	
	By (\cite{BRB}, Proposition 10.3.5), every functor in the dual Serre subcategory $\delta \mathsf{S} \subseteq (\mathcal{A}\hbox{-}\mathrm{mod},\mathbf{Ab})^{\mathrm{fp}}$ has form $\delta F_f$ with presentation \[ 0 \to \delta F \to A \otimes_{\mathcal{A}} - \xrightarrow{f \otimes_{\mathcal{A}}-} B \otimes_{\mathcal{A}} -\] for some $F_f \in \mathsf{S}$. Therefore, $X \in \delta \mathcal{D}$ if and only if for every $f:A \to B$ in $\mathcal{A}$ such that $F_f \in \mathsf{S}$, $\delta F_f(X)=0$ equivalently, $f \otimes_{\mathcal{A}}X:A\otimes_{\mathcal{A}}X \to B \otimes_{\mathcal{A}}X$ is a monomorphism. 
	
	Thus $\mathcal{D}$ is fp-hom-closed if and only if $\delta\mathcal{D}$ satisfies the following. $X \in \delta \mathcal{D}$ if and only if for every $F_f \in \mathsf{S}$, and every $M \in \mathrm{mod}\hbox{-}\mathcal{A}$, \[ (M \otimes f) \otimes_{\mathcal{A}} X:(M \otimes A) \otimes_{\mathcal{A}} X \to (M \otimes B) \otimes_{\mathcal{A}} X\] is a monomorphism. But by Lemma \ref{lemma otimes A Md}, \[ (M \otimes f) \otimes_{\mathcal{A}} X:(M \otimes A) \otimes_{\mathcal{A}} X \to (M \otimes B) \otimes_{\mathcal{A}} X\] is a monomorphism if and only if \[  f \otimes_{\mathcal{A}} (M^d \otimes X): A \otimes_{\mathcal{A}} (M^d \otimes X) \to B \otimes_{\mathcal{A}} (M^d \otimes X)\] is a monomorphism. Therefore, $\mathcal{D}$ is fp-hom-closed if and only if for every $X \in \delta\mathcal{D}$ and all $M \in \mathrm{mod}\hbox{-}\mathcal{A}$, $M^d \otimes X \in \delta \mathcal{D}$ or equivalently for all $N \in \mathcal{A}\hbox{-}\mathrm{mod}$, $N \otimes X \in \delta \mathcal{D}$ as $(-)^d$ is an equivalence (see Proposition \ref{prop duality}). That is, $\mathcal{D}$ is a fp-hom-closed if and only if $\delta \mathcal{D}$ is closed under tensoring with finitely presented left $\mathcal{A}$-modules if and only if $\delta \mathcal{D}$ is a tensor-ideal, as required.    
\end{proof}

\section{Examples}	\label{section examples}

\subsection{Tensor product of $R$-modules}

Let us consider the case where $\mathcal{C}=R\hbox{-}\mathrm{Mod}$ for a commutative ring $R$. Here, $R\hbox{-} \mathrm{Mod}$ has a closed symmetric monoidal structure with tensor product given by $\otimes_R$. The tensor unit is $R$ and the internal hom-functor is given by the usual hom-set with $R$-module structure given by $(rf)(x)=rf(x)=f(rx)$ for all $x \in X$ where $f \in \mathrm{hom}(X,Y)=\mathrm{Hom}_R(X,Y)$ and $r \in R$. Note that $R\hbox{-}\mathrm{mod}$ is a monoidal subcategory. 

The next result shows that if a functor $F \in (R\hbox{-} \mathrm{mod}, \mathbf{Ab})^{fp}$ belongs to some Serre subcategory $\mathsf{S}$, and if $F$ is `simple enough' then $G \otimes_R F \in \mathsf{S}$ for any finitely presented functor $G$.   

\begin{prop} \label{pdim}
	Let $R$ be a commutative ring, $\mathsf{S} \subseteq (R\hbox{-} \mathrm{mod}, \mathbf{Ab})^{fp}$ be a Serre subcategory and $F \in \mathsf{S}$ satisfy $\mathrm{pdim}(F)=0$ or $\mathrm{pdim}(F)=1$. Then for any $G \in (R\hbox{-} \mathrm{mod}, \mathbf{Ab})^{fp}$, $G \otimes F \in \mathsf{S}$, where $\otimes$ denotes the tensor product induced by $\otimes_R$ on $R\hbox{-}\mathrm{Mod}$.
\end{prop}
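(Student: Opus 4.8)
The plan is to reduce the claim to the case of representable functors, and then to exploit the projective dimension hypothesis to force an injectivity that realises $(C,-)\otimes F$ as a subobject of $F^{\oplus n}\in\mathsf{S}$.

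First I would reduce to representables exactly as in the proof of Lemma \ref{definable tensor ideal respresentables}: given any $G\in(R\hbox{-}\mathrm{mod},\mathbf{Ab})^{\mathrm{fp}}$ with presentation $(V,-)\to(U,-)\to G\to 0$, right exactness of $-\otimes F$ exhibits $G\otimes F$ as a quotient of $(U,-)\otimes F$, so since $\mathsf{S}$ is Serre it suffices to prove $(C,-)\otimes F\in\mathsf{S}$ for every $C\in R\hbox{-}\mathrm{mod}$. When $\mathrm{pdim}(F)=0$ this is immediate: then $F\cong(A,-)$, so $(C,-)\otimes F\cong(C\otimes_R A,-)$, and a surjection $R^n\twoheadrightarrow C$ tensored with $A$ gives $A^n\twoheadrightarrow C\otimes_R A$, hence a monomorphism $(C\otimes_R A,-)\hookrightarrow(A,-)^n=F^{\oplus n}$; closure of $\mathsf{S}$ under subobjects finishes this case. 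The general case $\mathrm{pdim}(F)\le1$ requires more.

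For the general case I would fix a finite presentation $R^m\xrightarrow{\phi}R^n\to C\to0$ and apply $\mathrm{Hom}(-,-)$ to obtain the exact sequence $0\to(C,-)\xrightarrow{\iota}(R^n,-)\xrightarrow{(\phi,-)}(R^m,-)$ in $\mathcal{C}^{\mathrm{fp}}\hbox{-}\mathrm{Mod}$. Writing $K=\mathrm{coker}(\iota)=\mathrm{im}((\phi,-))$ and $F_\phi=\mathrm{coker}((\phi,-))$, there are short exact sequences $0\to(C,-)\to(R^n,-)\to K\to0$ and $0\to K\to(R^m,-)\to F_\phi\to0$, the latter exhibiting $K$ as a first syzygy of $F_\phi$. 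Tensoring the first sequence with $F$ and using that $(R^n,-)$ is projective, the long exact $\mathrm{Tor}$-sequence identifies $\ker\big(\iota\otimes F:(C,-)\otimes F\to F^{\oplus n}\big)$ with $\mathrm{Tor}_1(K,F)$, and dimension shifting along the second sequence gives $\mathrm{Tor}_1(K,F)\cong\mathrm{Tor}_2(F_\phi,F)$. Thus everything comes down to the vanishing $\mathrm{Tor}_2(F_\phi,F)=0$.

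The key point, and the main obstacle, is to bring the hypothesis on $F$ (a condition on the second tensor variable) to bear on this first-variable $\mathrm{Tor}$; a priori the first-variable derived functor is controlled by the projective dimension of $F_\phi$, about which we know nothing. For this I would invoke that the Day convolution $\mathrm{Tor}$ is balanced: by Lemma \ref{(x,-) otimes - exact} the functor $(C,-)\otimes-$ is exact, and by symmetry so is $-\otimes(C,-)$, while the tensor of two representables is again representable and hence projective. These are precisely the conditions under which the left derived functors of $-\otimes F$ and of $F_\phi\otimes-$ agree, so $\mathrm{Tor}_2(F_\phi,F)$ may be computed from a projective resolution of the second variable; since $\mathrm{pdim}(F)\le1$ this resolution has length one and the degree-two homology vanishes. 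Consequently $\iota\otimes F$ is a monomorphism, $(C,-)\otimes F$ embeds in $F^{\oplus n}\in\mathsf{S}$, and subobject-closure of $\mathsf{S}$ completes the reduction and the proof. The subtlety to watch throughout is that $\mathrm{Tor}$ and projective resolutions are taken in the ambient locally coherent category $\mathcal{C}^{\mathrm{fp}}\hbox{-}\mathrm{Mod}$, which has enough projectives, with the finitely presented objects $K$, $F_\phi$ and $(C,-)\otimes F$ tracked back into $\mathsf{S}\subseteq\mathcal{C}^{\mathrm{fp}}\hbox{-}\mathrm{mod}$.
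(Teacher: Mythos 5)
Your proof is correct, but it takes a genuinely different route from the paper's. The two arguments share only the opening move, the reduction to $G=(C,-)$ via Lemma \ref{definable tensor ideal respresentables}; after that the paper leaves the functor category entirely: it passes to the definable subcategory $\mathcal{D}$ corresponding to $\mathsf{S}$ (Theorem \ref{loc fp def Serre}) and checks annihilation directly in $R\hbox{-}\mathrm{Mod}$ via the tensor-hom adjunction --- for $\mathrm{pdim}(F)=0$, $F\cong(A,-)$ and $(C\otimes_R A,D)\cong(C,\mathrm{hom}(A,D))=0$ for all $D\in\mathcal{D}$; for $\mathrm{pdim}(F)=1$, writing $F=F_f$ with $f$ an epimorphism, $(f,D)$ is an isomorphism for every $D\in\mathcal{D}$ and the adjunction square forces $(C\otimes_R f,D)$ to be an isomorphism, so $\overrightarrow{(C,-)\otimes F}(D)=0$. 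You instead stay inside the functor category and never mention $\mathcal{D}$: you exhibit $(C,-)\otimes F$ as a subobject of $F^{\oplus n}$, in the projective case via a surjection $R^n\twoheadrightarrow C$, and in the dimension-one case by identifying $\ker(\iota\otimes F)$ with $\mathrm{Tor}_1(K,F)\cong\mathrm{Tor}_2(F_\phi,F)$ and killing it by balancing Tor. That balance step is the only delicate point, and your justification is sound: every projective needed here can be taken to be a (finite direct sum of) representable(s) --- indeed $F$ itself has a length-one representable resolution and $F_\phi$ has the representable resolution $0\to(C,-)\to(R^n,-)\to(R^m,-)\to F_\phi\to0$ --- Lemma \ref{(x,-) otimes - exact} together with symmetry makes tensoring with such objects exact in either variable, and tensor products of representables are representable, so the Cartan--Eilenberg double-complex argument applies and $\mathrm{Tor}_2(-,F)$ vanishes once $\mathrm{pdim}(F)\le1$. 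What the paper's route buys is brevity and minimal machinery: adjunction plus the Serre/definable dictionary, no derived functors. What your route buys is conceptual clarity and a purely internal argument: the hypothesis $\mathrm{pdim}(F)\le1$ enters exactly as the vanishing of a second Tor, which explains why the bound is natural and matches the paper's observation that the statement fails at $\mathrm{pdim}(F)=2$ (the Serre subcategory generated by $T$ in Example \ref{Mike otimesR example}), and the only closure properties of $\mathsf{S}$ you use are subobjects and finite direct sums. One point to make explicit in a careful write-up: your monomorphism $(C,-)\otimes F\hookrightarrow F^{\oplus n}$ is produced in $(R\hbox{-}\mathrm{mod},\mathbf{Ab})$, but both objects are finitely presented and this category is locally coherent, so its finitely presented objects form an exact abelian subcategory and the inclusion really does witness $(C,-)\otimes F$ as a subobject of $F^{\oplus n}$ in $(R\hbox{-}\mathrm{mod},\mathbf{Ab})^{\mathrm{fp}}$, where closure of $\mathsf{S}$ under subobjects applies; also $(R^n,-)\otimes F\cong F^{\oplus n}$ uses that $(R,-)$ is the Day convolution unit.
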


\begin{proof}
	By Lemma \ref{definable tensor ideal respresentables}, we can take $G=(C,-)$. Throughout, let $\mathcal{D}$ be the definable subcategory associated to $\mathsf{S}$ as in Theorem \ref{loc fp def Serre}.
	
	Suppose $F \in \mathsf{S}$ satisfies $\mathrm{pdim}(F)=0$. Then $F=(A,-)$ for some $A \in R\hbox{-} \mathrm{mod}$.  Therefore, for all $D \in \mathcal{D}$, $(A,D)=0$. For any $C \in R\hbox{-} \mathrm{mod}$ we have $(C,-) \otimes (A,-)=(C \otimes_R A,-)$. We want to show that for all $D \in \mathcal{D}$, $(C \otimes_R A,D)=0$. But by the adjunction isomorphism we have $(C \otimes_R A,D)\cong (C,\mathrm{hom}(A,D))=(C,(A,D))=(C,0)=0$, so $(C,-) \otimes (A,-) \in \mathsf{S}$, as required.
	
	Now suppose $F \in \mathsf{S}$ satisfies $\mathrm{pdim}(F)=1$. Then we have an exact sequence
	\[ 0 \to (B,-) \xrightarrow{(f,-)} (A,-) \xrightarrow{\pi} F \to 0, \] where the map $f:A \to B$ is an epimorphism in $R\hbox{-}\mathrm{mod}$. We want to show that for all $C \in R\hbox{-}\mathrm{mod}$, $(C,-) \otimes F \in \mathsf{S}$ i.e. the map $(C \otimes_R B,D) \xrightarrow{(C \otimes_R f,D)} (C \otimes_R A,D)$ is an epimorphism for all $D \in \mathcal{D}$. 
	
	As $F \in \mathsf{S}$, $(B,D) \xrightarrow{(f,D)} (A,D)$ is an isomorphism, for every $D \in \mathcal{D}$. Therefore for any $C \in R\hbox{-}\mathrm{mod}$, the map $(C,(B,D)) \xrightarrow{(C,(f,D))} (C,(A,D))$ is an isomorphism and the tensor-hom adjunction gives the following commutative diagram. 
	
		\begin{tikzpicture}
	\matrix (m) [matrix of math nodes,row sep=5em,column sep=3em,minimum width=2em]
	{
		(C \otimes_R B, D)  &  (C \otimes_R A, D)  \\
		(C,(B,D)) &  (C,(A,D)) \\};
	\path[-stealth]
	(m-1-1) edge node [above] {$(C \otimes_R f,D)$} (m-1-2)
	(m-1-1) edge node [left] {$\cong$} (m-2-1)
	(m-1-2) edge node [right] {$\cong$} (m-2-2)
	(m-2-1) edge node [below] {$\cong$} (m-2-2);
	\end{tikzpicture}
	
	Therefore for any $C \in R\hbox{-}\mathrm{mod}$, the map $(C \otimes_R B,D) \xrightarrow{(C \otimes_R f,D)} (C \otimes_R A,D)$ is an epimorphism for all $D \in \mathcal{D}$ and $(C,-) \otimes F \in \mathsf{S}$ as required.	
\end{proof}

Proposition \ref{pdim} does not hold for $\mathrm{pdim}(F)=2$. Indeed, the Serre subcategory generated by $T$ in Example \ref{Mike otimesR example} given below provides a counter example.

The following example is from (\cite{P18}, Section 13).

\begin{example}  (\cite{P18}, Section 13) \label{Mike otimesR example}
	Let $R=k[\epsilon: \epsilon^2=0]$, where $k$ is any field. We can define a monoidal structure on the category $R\hbox{-} \mathrm{Mod}$ with $\otimes:R\hbox{-}\mathrm{Mod} \times R\hbox{-}\mathrm{Mod} \to R\hbox{-}\mathrm{Mod}$ given by the usual tensor product of $R$-modules, $\otimes=\otimes_R$. We extend this to a monoidal structure on $(R\hbox{-}\mathrm{mod},\mathbf{Ab})^{\mathrm{fp}}$ using Day convolution product. First note that the only indecomposable $R$-modules are $_RR$ and $U=R / \mathrm{rad}(R)=R / \big{<}\epsilon \big{>} \cong k$. In fact every $R$-module is isomorphic to a direct sum of copies of these simple modules. We have $R \otimes_R R \cong R$, $R \otimes_R U \cong U$ and $U \otimes_R U \cong U$.
	
	Consider the exact sequence $0 \to \big{<}\epsilon\big{>} \xrightarrow{j} R \xrightarrow{p} U \to 0.$ Let $S$ and $T$ be the functors such that we have exact sequences $0 \to (U,-) \xrightarrow{(p,-)} (R,-) \to S \to 0$ and 	$0 \to (U,-) \xrightarrow{(p,-)} (R,-) \xrightarrow{(j,-)} (\big{<} \epsilon \big{>},-) \to T \to 0$. By Section 13 of \cite{P18}, the indecomposable functors in $(R\hbox{-}\mathrm{mod},\mathbf{Ab})^{\mathrm{fp}}$ are
	
	$$S:M \mapsto \epsilon M,$$
		
	$$T: M \mapsto \mathrm{ann}_M(\epsilon)/ \epsilon M,$$
	
	$$(U,-):M \mapsto \mathrm{ann}_M(\epsilon),$$
	
	$$W:M \mapsto M /\epsilon M,$$ and
	
	$$(R,-):M \mapsto M.$$

	The table below shows the action of the tensor product on $(R\hbox{-}\mathrm{mod},\mathbf{Ab})^{\mathrm{fp}}$ (given in \cite{P18}, Section 13.3).
	
	\FloatBarrier	
	\begin{table}[htbp!]
		\begin{tabular}{l|lllll}
			$\otimes$                  & $S~~~$     & $T$        & $(U,-)~~~$ & $W$        & $(R,-)$                 \\ \hline
			& & & & & \\
			$S$                        & $S$        & 0          & 0          & $S$        & $S$                     \\
			& & & & & \\
			$T$                        & 0          & $(U,-)~~~$ & $(U,-)$    & $T$        & $T$                     \\
			& & & & & \\
			$(U,-)$                    & 0          & $(U,-)$    & $(U,-)$    & $(U,-)~~~$ & $(U,-)$                 \\
			& & & & & \\
			$W$                        & $S$        & $T$        & $(U,-)~~~$ & $W$        & $W$                     \\
			& & & & & \\
			$(R,-)$                    & $S$        & $T$        & $(U,-)$    & $W$        & $(R,-)$
		\end{tabular}
	\end{table}
	\FloatBarrier	
	
	Let us identify the definable subcategories of $R$-$\mathrm{Mod}$ for $R=k[\epsilon:\epsilon^2=0]$. Recall that a module $M \in R \hbox{-}\mathrm{Mod}$ has form $M=R^{(\kappa)} \oplus U^{(\lambda)}$ for some cardinals $\kappa$ and $\lambda$, (see \cite{Per11}, Section 6.8). If both $\kappa$ and $\lambda$ are non-zero, then $\big{<}M\big{>}=R\hbox{-}\mathrm{Mod}$. Therefore, the only non-trivial proper definable subcategories of $R$-$\mathrm{Mod}$ are $\big{<}R\big{>}=\{R^{({\kappa})}:{\kappa} \mathrm{~a~cardinal}\}$ and $\big{<}U\big{>}=\{U^{(\lambda)}:\lambda \mathrm{~a~cardinal}\}$. 
	
	Since $\otimes_R$ commutes with direct sums it is easy to see that both $\big{<}R\big{>}^{\mathrm{def}}$ and $\big{<}U\big{>}^{\mathrm{def}}$ are closed under tensor product and $\big{<}U\big{>}^{\mathrm{def}}$ is even a tensor-ideal in $R \hbox{-}\mathrm{Mod}$. Furthermore, we have $\mathrm{hom}(R,-)=\mathrm{Hom}_R(R,-) \cong \mathrm{Id}_{R\hbox{-}\mathrm{Mod}}$ and therefore $\mathrm{hom}(R,U) \cong U$. It can also be checked that $\mathrm{hom}(U,U) \cong U$. Any object in $\big{<} U \big{>}^{\mathrm{def}}$ can be written as a direct limit of finite powers of $U$ and for any $N \in R \hbox{-}\mathrm{mod}$, $\mathrm{hom}(N,-)$ commutes with direct limits. Therefore, since $\mathrm{hom}(-,-)$ commutes with finite direct sums in both variables, $\mathrm{hom}(R,U) \cong U$ and $\mathrm{hom}(U,U) \cong U$ is enough to imply that $\big{<} U \big{>}^{\mathrm{def}}$ is fp-hom-closed. On the other hand, $\mathrm{hom}(U,R) \cong U$ meaning $\big{<} R \big{>}^{\mathrm{def}}$ is not fp-hom-closed.
	
	Next let us consider the corresponding Serre subcategories. First take $\mathcal{D}=\big{<}U\big{>}^{\mathrm{def}}$. Then
	
	$$S(U)=\epsilon U=0,$$
	
	$$T(U)=\mathrm{ann}_{U}(\epsilon)/\epsilon U=U/0 \cong U,$$
	
	$$(U,-)(U)=\mathrm{ann}_{U}(\epsilon)=U,$$
	
	$$W(U)=U/\epsilon U=U/0 \cong U$$ and
	
	$$(R,-)(U)=U.$$
	Therefore $\mathsf{S}_{\mathcal{D}}$ is generated by the indecomposable functor $S$ and indeed consists just of direct sums of copies of $S$. As $\mathrm{pdim}(S)=1$, by Proposition \ref{pdim}, $G \otimes S \in S_{\mathcal{D}}$ for every finitely presented $G:R\hbox{-}\mathrm{mod} \to \mathbf{Ab}$. Therefore, $\mathsf{S}_{\mathcal{D}}$ is a Serre tensor-ideal. 
	
	Now take $\mathcal{D}=\big{<}R\big{>}^{\mathrm{def}}$. Then
	
	$$S(R)=\epsilon R=U,$$
	
	$$T(R)=\mathrm{ann}_{R}(\epsilon)/\epsilon R=U/U \cong 0,$$
	
	$$(U,-)(R)=\mathrm{ann}_{R}(\epsilon)=U,$$

	$$W(R)=R/\epsilon R=U$$ and
	
	$$(R,-)(R)=R.$$
	
	Therefore $\mathsf{S}_{\mathcal{D}}$ is generated by the indecomposable functor $T$. As $T \otimes T \cong (U,-)$ this Serre subcategory is not closed under tensor product.
	
	In summary we get the following table, where $\big{<}~\big{>}^{\mathrm{def}}$ denotes `the definable subcategory generated by' and $\big{<}~\big{>}^{\mathsf{S}}$ denotes `the Serre subcategory generated by'.
	
	\begin{table}[htbp!]
		\begin{tabular}{l|lll|l|ll}
			Definable & Monoidal & fp-hom- & Tensor- & Serre  & Monoidal & Tensor-  \\
			subcat. & subcat. & closed & ideal & subcat. & subcat. & ideal \\ \hline	& & & & & & \\
			0  & Yes & Yes &  Yes & $(R\hbox{-}\mathrm{mod},\mathbf{Ab})^{\mathrm{fp}}$ & Yes & Yes \\
			& & & & & & \\
			$\big{<}U\big{>}^{\mathrm{def}}$  & Yes & Yes & Yes & $\big{<}S\big{>}^{\mathsf{S}}$ & Yes  & Yes  \\
			& & & & & & \\
			$\big{<}R\big{>}^{\mathrm{def}}$ & Yes & No & No & $\big{<}T\big{>}^{\mathsf{S}}$ & No & No          \\
			& & & & & & \\
			$R\hbox{-}\mathrm{Mod}$ & Yes & Yes & Yes & 0 & Yes & Yes \\
		\end{tabular}
	\end{table}
\end{example}

\subsubsection{Von Neumann regular rings}
	
Let us consider the example of von Neumann regular rings. 
	
	\begin{definition}
		A ring $R$ is von Neumann regular if for every $x \in R$ there exists some $y \in R$ such that $x=xyx$.
	\end{definition}

\begin{prop}
	Let $R$ be a commutative von Neumann regular ring so the normal tensor product of rings, $\otimes_R$, is a symmetric closed monoidal structure on $R\hbox{-}\mathrm{Mod}$. Every definable subcategory of $R\hbox{-}\mathrm{Mod}$ is fp-hom-closed.
\end{prop}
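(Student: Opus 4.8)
The plan is to exploit the fact that over a commutative von Neumann regular ring every finitely presented module is projective, and then to let the closure properties of definable subcategories do all the work. First I would recall that $R$ is von Neumann regular if and only if $R$ is absolutely flat, so that every $R$-module is flat; since a finitely presented flat module is projective, every $A \in R\hbox{-}\mathrm{mod}$ is finitely generated projective and hence a direct summand of a free module, say $A \oplus A' \cong R^n$ for some $n$. This is the only genuinely ring-theoretic input.

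Now I would fix an arbitrary definable subcategory $\mathcal{D} \subseteq R\hbox{-}\mathrm{Mod}$, an object $A \in R\hbox{-}\mathrm{mod}$ and some $X \in \mathcal{D}$, and show that the internal hom $\mathrm{hom}(A,X) = \mathrm{Hom}_R(A,X)$ lies in $\mathcal{D}$. Applying $\mathrm{Hom}_R(-,X)$ to the splitting $A \oplus A' \cong R^n$ gives $\mathrm{Hom}_R(A,X) \oplus \mathrm{Hom}_R(A',X) \cong \mathrm{Hom}_R(R^n,X) \cong X^n$. Since $\mathcal{D}$ is definable it is closed under products, hence under finite direct sums, so $X^n \in \mathcal{D}$; and since a split monomorphism is a pure monomorphism, $\mathcal{D}$ is closed under direct summands, so the summand $\mathrm{Hom}_R(A,X)$ of $X^n$ belongs to $\mathcal{D}$. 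As $A$ and $X$ were arbitrary, $\mathcal{D}$ is fp-hom-closed, which is exactly the claim.

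An alternative, more in the spirit of the preceding machinery, factors through the functor category: over a von Neumann regular ring every $f \colon A \to B$ in $R\hbox{-}\mathrm{mod}$ decomposes as a split epimorphism onto $\mathrm{im}(f)$ followed by a split monomorphism into $B$ (finitely generated submodules of projectives are direct summands), and a short computation with the presentation $(B,-) \xrightarrow{(f,-)} (A,-) \to F_f \to 0$ then yields $F_f \cong (\ker f, -)$, so every finitely presented functor is projective. From here Proposition \ref{pdim} in the case $\mathrm{pdim}(F)=0$, combined with Lemma \ref{definable tensor ideal respresentables}, shows every Serre subcategory of $(R\hbox{-}\mathrm{mod},\mathbf{Ab})^{\mathrm{fp}}$ is a tensor-ideal, whence Theorem \ref{thm} gives that every definable subcategory is fp-hom-closed. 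I would present the first argument as the main proof, since it is direct and transparent. I do not expect a serious obstacle: once one has recorded that finitely presented $R$-modules are projective, both routes are routine, and the only point requiring a little care is the (standard) observation that definable subcategories are closed under finite direct sums and direct summands.
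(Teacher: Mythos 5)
Your proposal is correct, but your main argument takes a genuinely different and more elementary route than the paper. The paper stays inside the functor category: it cites (\cite{BRB}, Proposition 10.2.20) that the global dimension of $(R\hbox{-}\mathrm{mod},\mathbf{Ab})^{\mathrm{fp}}$ is zero precisely when $R$ is von Neumann regular, then applies Proposition \ref{pdim} (the $\mathrm{pdim}(F)=0$ case) to conclude that every Serre subcategory is a tensor-ideal, and finally invokes the correspondence of Theorem \ref{thm} to translate this into every definable subcategory being fp-hom-closed. Your first argument bypasses all of that machinery: from the ring-theoretic fact that finitely presented modules over an absolutely flat ring are projective, you realise $\mathrm{hom}(A,X)=\mathrm{Hom}_R(A,X)$ as a direct summand of $X^n$ and conclude via the closure of $\mathcal{D}$ under finite products and pure (in particular split) subobjects; this is self-contained, transparent, and needs nothing beyond the definition of a definable subcategory. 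What the paper's route buys in exchange is the explicit intermediate statement that \emph{every} Serre subcategory of $(R\hbox{-}\mathrm{mod},\mathbf{Ab})^{\mathrm{fp}}$ is a tensor-ideal, which is the form of the result the surrounding discussion actually reuses (e.g.\ the identification $\mathrm{Zg}(R\hbox{-}\mathrm{Mod})=\mathrm{Zg}^{\mathrm{hom}}(R\hbox{-}\mathrm{Mod})$), whereas your direct argument only delivers the definable-subcategory statement; your second, alternative sketch is essentially the paper's proof, with the added virtue that you derive the global dimension zero fact (via $F_f\cong(\ker f,-)$) rather than citing it. Both routes are sound; the only small caution is that your main argument, being special to this module-theoretic setting, does not generalise to situations where the internal hom of a representable is not computed so concretely.
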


\begin{proof} \label{prop von Neumann =fp-hom}
	By (\cite{BRB}, Proposition 10.2.20), the global dimension of $(R\hbox{-}\mathrm{mod},\mathbf{Ab})^{\mathrm{fp}}$ is zero if and only if $R$ is von Neumann regular. Thus by Lemma \ref{pdim}, for $R$ von Neumann regular, every Serre subcategory of $(R\hbox{-}\mathrm{mod},\mathbf{Ab})^{\mathrm{fp}}$ is a tensor-ideal and therefore, by Theorem \ref{thm}, every definable subcategory is fp-hom-closed.	
\end{proof}

\begin{prop} \label{prop von Neumann =exactness}
	Let $R$ be a commutative von Neumann regular ring so the normal tensor product of rings, $\otimes_R$, is a symmetric closed monoidal structure on $R\hbox{-}\mathrm{Mod}$. Every fp-hom-closed definable subcategory $\mathcal{D}$ of $R\hbox{-}\mathrm{Mod}$ satisfies the exactness criterion. 
\end{prop}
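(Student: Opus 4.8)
The plan is to prove the exactness criterion indirectly, by invoking the converse already established in Proposition \ref{Lem exactness if}: it suffices to show that the monoidal structure on $\mathrm{fun}(\mathcal{D})$ (which is defined, as in Definition \ref{remark strict}, precisely because $\mathcal{D}$ is fp-hom-closed) is exact in each variable. The advantage is that this replaces a statement about factorisations of morphisms in $R\hbox{-}\mathrm{Mod}$ by a purely homological statement inside the abelian category $\mathrm{fun}(\mathcal{D})$.

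First I would record that, since $R$ is von Neumann regular, (\cite{BRB}, Proposition 10.2.20) gives that the global dimension of $(R\hbox{-}\mathrm{mod},\mathbf{Ab})^{\mathrm{fp}}$ is zero; equivalently, every short exact sequence in $(R\hbox{-}\mathrm{mod},\mathbf{Ab})^{\mathrm{fp}}$ splits. Next I would transfer this splitting property to the Serre quotient $\mathrm{fun}(\mathcal{D}) \simeq (R\hbox{-}\mathrm{mod},\mathbf{Ab})^{\mathrm{fp}}/\mathsf{S}$. As recalled in the proof of Proposition \ref{prop exactness only if} (via \cite{BRB}, Lemma 11.1.6 and Corollary 11.1.42), every short exact sequence in $\mathrm{fun}(\mathcal{D})$ is isomorphic to the image, under the exact and additive localisation functor $q$, of a short exact sequence $0 \to F_f \to F_g \to F_l \to 0$ in $(R\hbox{-}\mathrm{mod},\mathbf{Ab})^{\mathrm{fp}}$. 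Since the latter sequence splits and $q$ is additive, its image splits as well; hence every short exact sequence in $\mathrm{fun}(\mathcal{D})$ splits.

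Finally, because the induced tensor product on $\mathrm{fun}(\mathcal{D})$ is additive in each variable, the functor $K \otimes -$ preserves finite direct sums and therefore carries split short exact sequences to split short exact sequences. Combined with the previous step, $K \otimes -$ is exact for every $K \in \mathrm{fun}(\mathcal{D})$, so the monoidal structure is exact in each variable, and Proposition \ref{Lem exactness if} then yields that $\mathcal{D}$ satisfies the exactness criterion. The step I expect to require the most care, and the only genuine obstacle, is the transfer of semisimplicity to the quotient: one must be sure that an arbitrary short exact sequence of $\mathrm{fun}(\mathcal{D})$ really does lift to a short exact sequence in $(R\hbox{-}\mathrm{mod},\mathbf{Ab})^{\mathrm{fp}}$, so that the splitting available in the ambient category can be pushed forward along $q$; this is exactly what the cited lifting results supply, and everything else is formal.
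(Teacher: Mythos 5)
Your proof is correct, and it takes a genuinely different route from the paper's, although both arguments funnel through Proposition \ref{Lem exactness if}. The paper never mentions semisimplicity of the functor category: it uses the fact that over a von Neumann regular ring every module is flat, so $\otimes_R$ is exact on $R\hbox{-}\mathrm{Mod}$, together with the equivalence $(R\hbox{-}\mathrm{mod},\mathbf{Ab})^{\mathrm{fp}}\simeq (R\hbox{-}\mathrm{mod})^{\mathrm{op}}\simeq R\hbox{-}\mathrm{mod}$ of (\cite{BRB}, Proposition 10.2.38), which it asserts is monoidal for Day convolution; this makes the monoidal structure on $\mathrm{fun}(R\hbox{-}\mathrm{Mod})$ exact in each variable, so Proposition \ref{Lem exactness if} is applied to the \emph{single} subcategory $\mathcal{D}=R\hbox{-}\mathrm{Mod}$, and the general case then follows because the exactness criterion quantifies only over morphisms into objects of $\mathcal{D}$, hence is inherited by every subcategory of $R\hbox{-}\mathrm{Mod}$ once it holds for $R\hbox{-}\mathrm{Mod}$ itself. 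You instead argue uniformly in $\mathcal{D}$: global dimension zero of $(R\hbox{-}\mathrm{mod},\mathbf{Ab})^{\mathrm{fp}}$ (\cite{BRB}, Proposition 10.2.20, the same fact the paper uses to show every definable subcategory is fp-hom-closed) makes every short exact sequence there split; the lifting of short exact sequences through the Serre quotient (the same citation, \cite{BRB} Lemma 11.1.6 and Corollary 11.1.42, that the paper makes inside the proof of Proposition \ref{prop exactness only if}) pushes that splitting down to $\mathrm{fun}(\mathcal{D})\simeq (R\hbox{-}\mathrm{mod},\mathbf{Ab})^{\mathrm{fp}}/\mathsf{S}$; and additivity of $K\otimes -$ (a consequence of its right exactness) finishes, since additive functors preserve split exactness. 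Your route buys two things: it sidesteps the monoidal identification $\mathrm{fun}(R\hbox{-}\mathrm{Mod})\simeq R\hbox{-}\mathrm{mod}$, which the paper invokes with little justification, and it proves the stronger statement that every short exact sequence in $\mathrm{fun}(\mathcal{D})$ splits, so that \emph{every} additive functor out of $\mathrm{fun}(\mathcal{D})$ is exact. The paper's route buys a concrete description of $\mathrm{fun}(R\hbox{-}\mathrm{Mod})$ (useful elsewhere in the examples) and the handy observation that the exactness criterion, once established for the ambient category, restricts for free to all of its definable subcategories.
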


\begin{proof}
$R$ is von Neumann regular if and only if every (left) $R$-module is flat, that is for every $M \in R\hbox{-}\mathrm{Mod}$, $M \otimes_R -:R\hbox{-}\mathrm{Mod} \to \mathbf{Ab}$ is exact (e.g. see \cite{BRB}, Theorem 2.3.22). Therefore, since $R$ is commutative, we obtain a symmetric closed monoidal product on $R\hbox{-}\mathrm{Mod}$ which is exact in each variable. Furthermore, by (\cite{BRB}, Proposition 10.2.38) we have $(R\hbox{-}\mathrm{mod},\mathbf{Ab})^{\mathrm{fp}}\simeq (R\hbox{-}\mathrm{mod})^{\mathrm{op}} \simeq R\hbox{-}\mathrm{mod}$ where the direction $R\hbox{-}\mathrm{mod} \to (R\hbox{-}\mathrm{mod},\mathbf{Ab})^{\mathrm{fp}}$ is given by the Yoneda embedding. Therefore, this equivalence is monoidal with respect to Day convolution product. In other words, letting $\mathcal{D}=R\hbox{-}\mathrm{Mod}$, $\mathrm{fun}(\mathcal{D})=(R\hbox{-}\mathrm{mod},\mathbf{Ab})^{\mathrm{fp}}\simeq R\hbox{-}\mathrm{mod}$ has an additive symmetric monoidal structure which is exact in each variable. Thus, by Proposition \ref{Lem exactness if}, $\mathcal{D}=R\hbox{-}\mathrm{Mod}$ satisfies the exactness criterion. Consequently any fp-hom-closed definable subcategory of $R\hbox{-}\mathrm{Mod}$ also satisfies the exactness criterion.	
\end{proof}

\begin{remark}
 By Proposition \ref{prop von Neumann =fp-hom}, for $R$ von Neumann regular, $\mathrm{Zg}(R\hbox{-}\mathrm{Mod})$ and $\mathrm{Zg}^{\mathrm{hom}}(R\hbox{-}\mathrm{Mod})$ are the same topology. Furthermore, by Proposition \ref{prop von Neumann =exactness} and Proposition \ref{prop exactness only if}, for every definable subcategory $\mathcal{D} \subseteq R\hbox{-}\mathrm{Mod}$, we can induce an exact, additive, closed, symmetric monoidal structure on the corresponding functor category $\mathrm{fun}(\mathcal{D})$. 
 
 By (\cite{BRB}, Proposition 3.4.30) the definable subcategory generated by $M \in R\hbox{-}\mathrm{Mod}$ is given by $(R/ \mathrm{ann}_R(M))\hbox{-}\mathrm{Mod}$ viewed as a full subcategory of $R\hbox{-}\mathrm{Mod}$ via $R \to R/ \mathrm{ann}_R(M)$. Therefore, it is easy to see directly that $\big{<}M\big{>}^{\mathrm{def}}$ is fp-hom-closed. Futhermore, the associated Serre subcategory of $(R\hbox{-}\mathrm{mod},\mathbf{Ab})^{\mathrm{fp}} \simeq R\hbox{-}\mathrm{mod}$ is given by $\{X \in R\hbox{-}\mathrm{mod}: (X,M)=0\}$. 
\end{remark}

\subsubsection{Coherent rings}

Now we consider the case where $R$ is coherent.

\begin{definition}
	A commutative ring $R$ is coherent if every finitely generated ideal is finitely presented. 
\end{definition}    

We will use the following properties.

\begin{prop} (\cite{BRB}, Corollary 2.3.18 and Proposition E.1.47)
	A (commutative) ring $R$ is coherent if and only if $R\hbox{-}\mathrm{mod}$ is abelian.
\end{prop}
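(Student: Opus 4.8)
The plan is to reduce the statement to the single question of whether $R\hbox{-}\mathrm{mod}$ is closed under kernels inside $R\hbox{-}\mathrm{Mod}$. The category $R\hbox{-}\mathrm{mod}$ is always additive, and it always has cokernels computed as in $R\hbox{-}\mathrm{Mod}$: if $f:M \to N$ is a morphism of finitely presented modules then $\mathrm{coker}(f)=N/\mathrm{im}(f)$ is a quotient of the finitely presented module $N$ by a finitely generated submodule, hence finitely presented. Thus the only possible obstruction to $R\hbox{-}\mathrm{mod}$ being abelian is the existence of kernels lying in $R\hbox{-}\mathrm{mod}$. I would first record the standard reformulations of coherence that I intend to use: for a commutative ring, $R$ is coherent (every finitely generated ideal is finitely presented) if and only if every finitely generated submodule of a finitely presented module is finitely presented, equivalently the kernel of every morphism between finitely presented modules is finitely generated.

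For the implication that coherence forces $R\hbox{-}\mathrm{mod}$ to be abelian, I would take a morphism $f:M \to N$ in $R\hbox{-}\mathrm{mod}$ and analyse the factorisation $0 \to \mathrm{ker}(f) \to M \to \mathrm{im}(f) \to 0$ inside $R\hbox{-}\mathrm{Mod}$. First, $\mathrm{im}(f)$ is a finitely generated submodule of the finitely presented module $N$, so by coherence it is finitely presented. Then, from the short exact sequence with finitely presented total space $M$ and finitely presented quotient $\mathrm{im}(f)$, the standard lemma that such a kernel is finitely generated shows $\mathrm{ker}(f)$ is finitely generated; being a finitely generated submodule of the finitely presented module $M$, it is itself finitely presented by coherence. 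Hence $R\hbox{-}\mathrm{mod}$ is closed under kernels in $R\hbox{-}\mathrm{Mod}$, and a full additive subcategory of an abelian category closed under kernels and cokernels is abelian, with the inclusion exact.

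For the converse, I assume $R\hbox{-}\mathrm{mod}$ is abelian and fix a finitely generated ideal $I=(a_1,\dots,a_n)$, presented as the image of $\phi:R^n \to R$, $e_i \mapsto a_i$, a morphism between free, hence finitely presented, modules. In the abelian category $R\hbox{-}\mathrm{mod}$ this $\phi$ has a kernel $\kappa:K \to R^n$ with $K$ finitely presented, in particular finitely generated. The key point is to identify $\mathrm{im}(\kappa)$ with the ambient kernel $L=\mathrm{ker}_{R\hbox{-}\mathrm{Mod}}(\phi)\subseteq R^n$: for any $x \in L$ the map $R \to R^n$, $1 \mapsto x$, is a morphism of finitely presented modules annihilated by $\phi$, so by the universal property of $K$ it factors through $\kappa$, giving $x \in \mathrm{im}(\kappa)$; the reverse inclusion is immediate from $\phi\kappa=0$. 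Thus $L=\mathrm{im}(\kappa)$ is finitely generated, whence $I \cong R^n/L$ is finitely presented, and $R$ is coherent.

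The main obstacle is the converse direction, specifically the worry that an abstract abelian structure on $R\hbox{-}\mathrm{mod}$ need not a priori be the one inherited from $R\hbox{-}\mathrm{Mod}$, so that its categorical kernels could in principle differ from ambient kernels. The element-testing argument above, using only the free test object $R$, is precisely what circumvents this: it forces the kernel computed in $R\hbox{-}\mathrm{mod}$ to carry the correct underlying submodule without assuming in advance that the inclusion preserves kernels. The remaining ingredients — that finitely generated submodules of finitely presented modules are finitely presented under coherence, and the short-exact-sequence lemma on finite generation of kernels — are routine module-theoretic facts that I would simply cite.
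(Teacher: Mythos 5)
Your proof is correct. For the comparison you should know that the paper offers no argument at all for this proposition: it is imported verbatim from Prest's book (the cited Corollary 2.3.18 and Proposition E.1.47), consistent with the paper's policy of stating background facts without proof. So any complete argument is necessarily your own route, and yours is sound. The forward direction is the standard reduction: cokernels in $R\hbox{-}\mathrm{mod}$ are automatically computed as in $R\hbox{-}\mathrm{Mod}$, coherence gives closure under kernels, and a full additive subcategory of an abelian category closed under ambient kernels and cokernels is abelian with exact inclusion. The one caveat is that the paper defines coherence by the ideal condition, so the equivalence with the condition "finitely generated submodules of finitely presented modules are finitely presented", which you cite as routine, is where most of the module-theoretic weight of that direction actually sits; citing it is legitimate (it is completely standard), but a fully self-contained proof would have to include it. The genuine merit of your write-up is the converse: you correctly isolated the real subtlety, namely that abelianness of $R\hbox{-}\mathrm{mod}$ only provides abstract categorical kernels, which need not a priori agree with kernels computed in $R\hbox{-}\mathrm{Mod}$, and your element-testing argument with the free test object $R$ (showing that every $x$ in the ambient kernel $L$ of $\phi:R^n \to R$ factors through the categorical kernel $\kappa:K \to R^n$, so that $L=\mathrm{im}(\kappa)$ is finitely generated and $I \cong R^n/L$ is finitely presented) is exactly the device needed to bridge that gap and lands precisely on the ideal-theoretic definition of coherence that the paper uses.
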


\begin{prop} (\cite{BRB}, Theorem 3.4.24)
	A (commutative) ring $R$ is coherent if and only if the subcategory $\mathrm{Abs}\hbox{-}R\subseteq \mathrm{Mod}\hbox{-}R$ of absolutely pure modules is definable.
\end{prop}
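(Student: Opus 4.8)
The plan is to recognise $\mathrm{Abs}\hbox{-}R$ as the class of FP-injective modules and to test the three closure conditions characterising definability: closure under products, direct limits and pure submodules. Recall that $M \in \mathrm{Mod}\hbox{-}R$ is absolutely pure precisely when $\mathrm{Ext}^1_R(N,M)=0$ for every finitely presented $N$. Two of the three conditions hold over an arbitrary ring: since $\mathrm{Ext}^1_R(N,-)$ commutes with products, $\mathrm{Abs}\hbox{-}R$ is closed under products; and if $0 \to A \to B \to C \to 0$ is pure-exact with $B$ absolutely pure then, for finitely presented $N$, the map $\mathrm{Hom}_R(N,B)\to\mathrm{Hom}_R(N,C)$ is onto, so the connecting homomorphism $\mathrm{Hom}_R(N,C)\to\mathrm{Ext}^1_R(N,A)$ vanishes and $\mathrm{Ext}^1_R(N,A)$ embeds into $\mathrm{Ext}^1_R(N,B)=0$; hence $A$ is absolutely pure. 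Thus everything reduces to understanding closure under \emph{direct limits}, which is exactly where coherence enters.

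For the forward implication, suppose $R$ is coherent. By the previous Proposition $\mathrm{mod}\hbox{-}R$ is abelian, so every finitely presented module $N$ admits a resolution by finitely generated projectives and is therefore $FP_\infty$; consequently $\mathrm{Ext}^1_R(N,-)$ commutes with direct limits. If $(M_i)_{i\in I}$ is a directed system of absolutely pure modules, then $\mathrm{Ext}^1_R(N,\varinjlim_i M_i)\cong\varinjlim_i\mathrm{Ext}^1_R(N,M_i)=0$ for every finitely presented $N$, so $\varinjlim_i M_i$ is absolutely pure. Combined with the two automatic conditions this shows $\mathrm{Abs}\hbox{-}R$ is definable.

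For the converse, assume $\mathrm{Abs}\hbox{-}R$ is definable, hence closed under direct limits, and aim to recover coherence. The efficient route is character-module duality: writing $M^+=\mathrm{Hom}_{\mathbb Z}(M,\mathbb Q/\mathbb Z)$, a right module $M$ is absolutely pure if and only if the left module $M^+$ is flat, and dually a left module $N$ is flat if and only if $N^+$ is absolutely pure (using that $N\hookrightarrow N^{++}$ is pure and that pure submodules of flat modules are flat). By Chase's theorem $R$ is coherent exactly when every product of flat left modules is flat, so it suffices to translate closure of $\mathrm{Abs}\hbox{-}R$ under direct limits into closure of the flat left modules under products, realising the character modules of products of flats via suitable (injective, hence absolutely pure) directed systems and invoking the hypothesis.

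The main obstacle is precisely this last translation: passing between a \emph{product} on the flat side and a \emph{direct limit} on the absolutely pure side is not a one-line dualisation, since $(-)^+$ sends colimits to limits rather than to colimits. This is the substantive content of the cited result (Stenström's theorem relating coherent rings and FP-injective modules), and a careful argument either performs the bookkeeping through the canonical presentation $\bigoplus_{i\le j}M_i \to \bigoplus_i M_i \twoheadrightarrow \varinjlim_i M_i$ exhibiting a direct limit as a pure quotient of a coproduct, or proceeds contrapositively, extracting from a finitely generated ideal with non-finitely-generated syzygy a directed system of injective modules whose colimit fails to be absolutely pure. I would expect to spend most of the effort making this single step precise.
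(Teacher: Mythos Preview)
The paper does not prove this proposition at all: it is stated with the citation (\cite{BRB}, Theorem~3.4.24) and no proof is given, so there is nothing to compare your argument against.

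On the merits of what you wrote: the forward direction is complete and correct. The two ``automatic'' closure properties (products and pure submodules) are also fine. The converse, however, is not a proof but a plan, and you say so yourself. The character-module route you sketch genuinely runs into the obstacle you name: $(-)^{+}$ turns coproducts into products, not the other way round, so you cannot pass from closure of $\mathrm{Abs}\hbox{-}R$ under direct limits to closure of flats under products by a formal dualisation. Neither of the two fixes you mention in the last sentence is carried out. As it stands, the converse is a description of the difficulty rather than a resolution of it; to complete it you would need either to reproduce Stenstr\"om's direct argument (constructing, from a finitely generated ideal that is not finitely presented, an explicit directed system of injectives whose colimit has a nonzero $\mathrm{Ext}^{1}$ against a finitely presented module) or to invoke the model-theoretic machinery that \cite{BRB} uses at the cited theorem.
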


\begin{lemma} \label{lem abs pure}
	Let $\mathcal{C}$ be as in Assumption \ref{Rmk C} and suppose $X \in \mathcal{C}$ and $U \in \mathcal{C}^{\mathrm{fp}}$ are such that $\mathrm{hom}(U,X)$ is absolutely pure. Let $f:A \to B$ be a morphism in $\mathcal{C}^{\mathrm{fp}}$. If $f:A \to B$ is a monomorphism in $\mathcal{C}$, then every morphism $h:A \otimes U \to X$ factors through $f \otimes U$. 
\end{lemma}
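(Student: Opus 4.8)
The plan is to transport the factorization problem across the tensor--hom adjunction and then to invoke the extension property enjoyed by absolutely pure objects. Since the monoidal structure on $\mathcal{C}$ is closed and symmetric, the functor $-\otimes U \cong U \otimes -$ is left adjoint to the internal hom-functor $\mathrm{hom}(U,-)$, so for each $A$ there is a bijection
\[ \Phi_A : \mathcal{C}(A \otimes U, X) \xrightarrow{\ \cong\ } \mathcal{C}(A, \mathrm{hom}(U,X)), \]
natural in $A$. I write $\tilde{h} := \Phi_A(h) : A \to \mathrm{hom}(U,X)$ for the morphism adjoint to $h$.

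First I would reduce the statement to a factorization inside $\mathcal{C}(-, \mathrm{hom}(U,X))$. Naturality of $\Phi$ in the first variable says that for any $k : B \otimes U \to X$ one has $\Phi_A\big(k \circ (f \otimes U)\big) = \Phi_B(k) \circ f$. As $\Phi_A$ and $\Phi_B$ are bijections, the existence of some $k$ with $h = k \circ (f \otimes U)$ is therefore equivalent to the existence of some $\tilde{k} : B \to \mathrm{hom}(U,X)$ with $\tilde{h} = \tilde{k} \circ f$, the two being related by $\tilde{k} = \Phi_B(k)$. Thus it suffices to extend $\tilde{h}$ along $f$.

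Now I would apply the hypotheses. Here $A$ and $B$ are finitely presented, $f : A \to B$ is a monomorphism in $\mathcal{C}$, and $\mathrm{hom}(U,X)$ is absolutely pure. By the standard characterization of absolutely pure (equivalently, fp-injective) objects, namely that they are injective relative to monomorphisms between finitely presented objects (see \cite{BRB}), the morphism $\tilde{h} : A \to \mathrm{hom}(U,X)$ extends to some $\tilde{k} : B \to \mathrm{hom}(U,X)$ with $\tilde{h} = \tilde{k} \circ f$. Transporting back across the adjunction, $k := \Phi_B^{-1}(\tilde{k})$ then satisfies $h = k \circ (f \otimes U)$, which is exactly the desired factorization.

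The substantive input is the last step: passing from the defining condition for absolute purity (every monomorphism out of $\mathrm{hom}(U,X)$ is pure) to injectivity against an arbitrary finitely presented monomorphism $f$. The adjunction bookkeeping of the first two paragraphs is routine naturality and carries no real content. The main obstacle, therefore, is to pin down and cite the precise form of this characterization valid for a finitely accessible $\mathcal{C}$ as in Assumption \ref{Rmk C}, since the definition of pure monomorphism quantifies over \emph{all} morphisms between finitely presented objects, whereas the extension property we actually use concerns only the monomorphisms among them.
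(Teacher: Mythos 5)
Your proof is correct and takes essentially the same route as the paper's: transpose the factorization problem across the tensor--hom adjunction (using naturality in the first variable) and then apply the extension property of the absolutely pure object $\mathrm{hom}(U,X)$ against the monomorphism $f$ between finitely presented objects. The precise characterization you flag as the remaining task to pin down is exactly what the paper cites at this step, namely (\cite{BRB}, Proposition 2.3.1).
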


\begin{proof}
	Via the tensor-hom adjunction there exists some $h':B \otimes U \to X$ such that $h=h' \circ (f \otimes U)$ if and only if there exists some $\widehat{h'}:B \to \mathrm{hom}(U,X)$ such that $\widehat{h}=\widehat{h'} \circ f$ where $\widehat{h}:A \to \mathrm{hom}(U,X)$ is the morphism corresponding to $h$ via the adjunction isomorphism $(A \otimes U, X) \cong (A, \mathrm{hom}(U,X))$. But $\mathrm{hom}(U,X)$ is absolutely pure and $f:A \to B$ is a monomorphism with $A, B \in \mathcal{C}^{\mathrm{fp}}$ so applying (\cite{BRB}, Proposition 2.3.1) we get $\widehat{h}$ factors via $f$ as required.
\end{proof}

\begin{prop}
	Let $R$ be a commutative coherent ring. Any fp-hom-closed definable subcategory $\mathcal{D}$ of $\mathrm{Abs}\hbox{-}R$ satisfies the exactness criterion. 
\end{prop}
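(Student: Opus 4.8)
The plan is to verify the exactness criterion directly, working in $\mathcal{C}=\mathrm{Mod}\hbox{-}R=R\hbox{-}\mathrm{Mod}$ (as $R$ is commutative), which satisfies Assumption \ref{Rmk C} with $\mathcal{C}^{\mathrm{fp}}=R\hbox{-}\mathrm{mod}$ and monoidal product $\otimes_R$. Since $R$ is coherent, $R\hbox{-}\mathrm{mod}$ is abelian and $\mathrm{Abs}\hbox{-}R$ is definable in $R\hbox{-}\mathrm{Mod}$; hence an fp-hom-closed definable subcategory $\mathcal{D}\subseteq\mathrm{Abs}\hbox{-}R$ is itself a definable subcategory of $R\hbox{-}\mathrm{Mod}$ all of whose objects are absolutely pure, and fp-hom-closedness supplies the crucial fact that $\mathrm{hom}(P,X)$ is absolutely pure for every $P\in R\hbox{-}\mathrm{mod}$ and every $X\in\mathcal{D}$. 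This is precisely the hypothesis needed to invoke Lemma \ref{lem abs pure}.

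Fix $f:A\to B$, $g:U\to V$ in $R\hbox{-}\mathrm{mod}$ and $h:A\otimes U\to X$ with $X\in\mathcal{D}$, and assume $(f\otimes U)\mid h$ and $(A\otimes g)\mid h$. Using that $R\hbox{-}\mathrm{mod}$ is abelian, I would factor $f=\iota_f\circ\pi_f$ and $g=\iota_g\circ\pi_g$ into epimorphisms $\pi_f:A\to I$, $\pi_g:U\to J$ followed by monomorphisms $\iota_f:I\to B$, $\iota_g:J\to V$, where the images $I,J$ are again finitely presented. Then $f\otimes g=(\iota_f\otimes\iota_g)\circ(\pi_f\otimes\pi_g)$, so it suffices to factor $h$ first through the epimorphism $\pi_f\otimes\pi_g$ and then factor the resulting map through $\iota_f\otimes\iota_g$.

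For the epi part, write $\kappa_f:K_f\to A$ and $\kappa_g:K_g\to U$ for the inclusions of $\ker\pi_f$ and $\ker\pi_g$. From $h=h_1\circ(f\otimes U)$ together with $f\kappa_f=0$ one gets $h\circ(\kappa_f\otimes U)=0$, and from $h=h_2\circ(A\otimes g)$ together with $g\kappa_g=0$ one gets $h\circ(A\otimes\kappa_g)=0$. By right exactness of the tensor product, $\pi_f\otimes U$ is the cokernel of $\kappa_f\otimes U$, so $h$ descends to $\bar h:I\otimes U\to X$ with $h=\bar h\circ(\pi_f\otimes U)$; a short chase (using that $\pi_f\otimes K_g$ is an epimorphism and that $(\pi_f\otimes U)\circ(A\otimes\kappa_g)=(I\otimes\kappa_g)\circ(\pi_f\otimes K_g)$) then shows $\bar h\circ(I\otimes\kappa_g)=0$, so $\bar h$ descends through the cokernel $I\otimes\pi_g$ of $I\otimes\kappa_g$, yielding $k:I\otimes J\to X$ with $h=k\circ(\pi_f\otimes\pi_g)$. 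This step is exactly where both factorization hypotheses of the exactness criterion are consumed.

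For the mono part, I would factor $\iota_f\otimes\iota_g=(\iota_f\otimes V)\circ(I\otimes\iota_g)$ and apply Lemma \ref{lem abs pure} twice. Since $\iota_g$ is a monomorphism and $\mathrm{hom}(I,X)$ is absolutely pure, the lemma (together with the symmetry of the monoidal structure, to read $I\otimes\iota_g$ as $\iota_g\otimes I$) shows $k$ factors through $I\otimes\iota_g$; since $\iota_f$ is a monomorphism and $\mathrm{hom}(V,X)$ is absolutely pure, the resulting map $I\otimes V\to X$ factors through $\iota_f\otimes V$. Composing gives $h_3:B\otimes V\to X$ with $k=h_3\circ(\iota_f\otimes\iota_g)$, whence $h=h_3\circ(f\otimes g)$, i.e. $(f\otimes g)\mid h$, as required. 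I expect the main bookkeeping obstacle to be organising the epi part so that the two divisibility hypotheses are applied cleanly via the cokernel descriptions; the mono part is where coherence and the fp-hom-closed assumption genuinely enter, through the double application of Lemma \ref{lem abs pure}.
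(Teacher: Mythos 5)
Your proposal is correct: every step checks out, including the two places where care is needed. The descent through $\pi_f\otimes\pi_g$ is legitimate because tensoring with a fixed module preserves cokernels (so $\pi_f\otimes U$ and $I\otimes\pi_g$ really are the cokernels of $\kappa_f\otimes U$ and $I\otimes\kappa_g$, and $\pi_f\otimes K_g$ is epi), and both applications of Lemma \ref{lem abs pure} are available because fp-hom-closedness together with $\mathcal{D}\subseteq\mathrm{Abs}\hbox{-}R$ makes $\mathrm{hom}(I,X)$ and $\mathrm{hom}(V,X)$ absolutely pure, while coherence keeps $I$, $J$ and the kernels finitely presented and makes the inclusions monomorphisms in $R\hbox{-}\mathrm{Mod}$. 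However, your argument is organised genuinely differently from the paper's. The paper never factors $g$: it works with $\mathrm{ker}(f)=A'\xrightarrow{k}A$ and $\mathrm{coker}(g)=V\xrightarrow{c}W$, and runs a correction argument on the given factorisation $h''$ of $h$ through $A\otimes g$. It shows $h''\circ(k\otimes V)$ factors through $A'\otimes c$, lifts that factor along $k\otimes W$ via Lemma \ref{lem abs pure} (applied with $\mathrm{hom}(W,X)$ absolutely pure), subtracts to obtain $r=h''-l'\circ(A\otimes c)$ satisfying $r\circ(k\otimes V)=0$, passes across the tensor-hom adjunction to see that $r$ factors through $\pi_f\otimes V$, applies the lemma a second time (with $\mathrm{hom}(V,X)$) to factor through $i_f\otimes V$ and hence $f\otimes V$, and finally recovers $h=r\circ(A\otimes g)$, giving $(f\otimes g)\mid h$. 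Your route --- epi-mono factorisation of both $f$ and $g$, descent of $h$ through the epimorphism $\pi_f\otimes\pi_g$ (which is exactly where both divisibility hypotheses are consumed), then two lemma applications to lift along $\iota_f\otimes\iota_g$ --- is more symmetric and modular: the roles of the hypotheses and of the purity/coherence input are cleanly separated, and no adjunction bookkeeping or subtraction trick is needed. The small price is that your first lemma application has the monomorphism in the right-hand tensor factor, so you must invoke the symmetry of $\otimes_R$ explicitly; the paper's two applications always have the monomorphism on the left, so it never needs that step (though symmetry is assumed throughout via Assumption \ref{Rmk C} in any case).
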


\begin{proof}
	Suppose $f:A \to B$ and $g:U \to V$ are morphisms in $\mathrm{mod}\hbox{-}R$ and $X \in \mathcal{D}$. Suppose further that $h:A \otimes U \to X$ satisfies $h=h' \circ (f \otimes U)=h'' \circ (A \otimes g)$ for some $h':B \otimes U \to X$ and $h'':A \otimes V \to X$, that is the following diagram commutes.
	
		\begin{tikzpicture}
	\matrix (m) [matrix of math nodes,row sep=4em,column sep=4em,minimum width=2em]
	{
		A \otimes U & B \otimes U   \\
		A \otimes V & X \\};
	\path[-stealth]
	(m-1-1) edge node [above] {$f \otimes U$} (m-1-2)
	(m-1-1) edge node [left] {$A \otimes g$} (m-2-1)
	(m-1-2) edge node [left] {$h'$} (m-2-2)
	(m-2-1) edge node [below] {$h''$} (m-2-2);
	\end{tikzpicture}
	
	As $\mathrm{mod}\hbox{-}R$ is abelian, we have exact sequences in $\mathrm{mod}\hbox{-}R$, $0 \to A' \xrightarrow{k} A \xrightarrow{f} B$ and $U \xrightarrow{g} V \xrightarrow{c} W \to 0$ where $k:A' \to A$ is the kernel of $f$ and $c:V \to W$ is the cokernel of $g$. Furthermore, since $A' \otimes -: \mathrm{mod}\hbox{-}R \to \mathrm{mod}\hbox{-}R$ is right exact we have an exact sequence \[A' \otimes U \xrightarrow{A' \otimes g} A' \otimes V \xrightarrow{A' \otimes c} A' \otimes W \to 0.\] 
	
	Now $h'' \circ  (k \otimes V) \circ  (A' \otimes g)= h'' \circ (A \otimes g) \circ (k \otimes U)=h' \circ (f \otimes U) \circ (k \otimes U)=0$. Therefore, $h'' \circ  (k \otimes V)$ factors via $A' \otimes c$, say $h'' \circ  (k \otimes V)=l \circ (A' \otimes c)$ for some $l:A' \otimes W \to X$. 
	
	\begin{tikzpicture}
	\matrix (m) [matrix of math nodes,row sep=4em,column sep=4em,minimum width=2em]
	{
		A' \otimes U & A' \otimes V & A' \otimes W & 0   \\
	&	A \otimes V & A \otimes W & \\
& X & &\\};
	\path[-stealth]
	(m-1-1) edge node [above] {$A' \otimes g$} (m-1-2)
	(m-1-2) edge node [above] {$A' \otimes c$} (m-1-3)
	(m-1-3) edge (m-1-4)
	(m-1-2) edge node [left] {$k \otimes V$} (m-2-2)
	(m-2-2) edge node [left] {$h''$} (m-3-2)
	(m-1-3) edge node [right] {$k \otimes W$} (m-2-3)
	(m-2-3) edge node [right] {$l'$} (m-3-2)
	(m-1-3) edge node [left] {$l$} (m-3-2);
	\end{tikzpicture}
	
	Note that $\mathcal{D} \subseteq \mathrm{Abs}\hbox{-}R$ is fp-hom-closed so $\mathrm{hom}(W,X)$ is absolutely pure. In addition, $k:A' \to A$ is a monomorphism therefore applying Lemma \ref{lem abs pure}, $l:A' \otimes W \to X$ factors via $k \otimes W$, say $l=l' \circ (k \otimes W)$, where $l':A \otimes W \to X$.  
	
	We have $h'' \circ (k \otimes V)=l' \circ (A \otimes c) \circ (k \otimes V)$. Setting $r=h'' - l' \circ (A \otimes c):A \otimes V \to X$ we have $r \circ (k \otimes V)=0$. Let $\hat{r}:A \to \mathrm{hom}(V,X)$ be the morphism corresponding to $r:A \otimes V \to X$ via the adjunction isomorphism. Then $\hat{r} \circ k=0$. 
	
	Now recall that $\mathrm{mod}\hbox{-}R$ is abelian and we have exact sequence $0 \to A' \xrightarrow{k} A \xrightarrow{f} B$. Therefore, $\mathrm{coker}(k)=\mathrm{im}(f)$. Write $f:A \to B$ as $i_f \circ \pi_f$ where $\pi_f:A \to \mathrm{im}(f)$ is the cokernel of $k$ and $i_f:\mathrm{im}(f) \to B$ is a monomorphism. Then $\hat{r}$ factors via $\pi_f:A \to \mathrm{im}(f)$, or equivalently, $r$ factors via $\pi_f \otimes V$, say $r=r' \circ (\pi_f \otimes V)$. Noting that $\mathrm{hom}(V,X)$ is absolutely pure, we may apply Lemma \ref{lem abs pure} to get that $r'=r'' \circ (i_f \otimes V)$ that is $r$ factors via $f \otimes V$. 
	
	Finally note that $h=h'' \circ (A \otimes g) =r \circ (A \otimes g) =r'' \circ (f \otimes V) \circ (A \otimes g)=r'' \circ (f \otimes g)$. Therefore we have shown that $h$ factors via $f \otimes g$ and the exactness criterion holds for $\mathcal{D}$.   	
\end{proof}

\subsection{Examples satisfying the rigidity condition}

Example \ref{Hopf} below gives a class of examples where the assumptions of Corollary \ref{Cfp rigid case} are satisfied.

\begin{example} \label{Hopf}
	The category of left $H$-modules, $H\hbox{-}\mathrm{Mod}$, for some Hopf algebra $H$ (e.g. a group algebra), has a closed symmetric monoidal structure by (\cite{Boh18}, Section 4.9). Furthermore, the finitely presentable left $H$-modules, $H\hbox{-}\mathrm{mod}$, form a symmetric rigid monoidal subcategory (see \cite{CGW06}, Section 4.1).
	
	Therefore, applying Corollary \ref{Cfp rigid case}, for every Hopf algebra $H$, the definable tensor-ideals of $H\hbox{-}\mathrm{Mod}$ correspond bijectively with the Serre tensor-ideals of $(H\hbox{-}\mathrm{mod},\mathbf{Ab})^{\mathrm{fp}}$.
\end{example} 

In particular let us consider an example from (\cite{P18}, Section 13).
 
 \begin{example} (\cite{P18}, Section 13) \label{Mike otimesk example}
 	Consider $R=k[\epsilon: \epsilon^2=0]$ as in Examples \ref{Mike otimesR example} but suppose further that the field $k$ has characteristic $2$. Then $R$ is a group ring. Indeed if we set $\epsilon+1=g$ and let $G=\big{<}g:g^2=1\big{>} \cong C_2$, then it is easy to see that $R \cong kG$ as rings. We can define a new tensor product $\otimes:R\hbox{-}\mathrm{Mod} \times R\hbox{-}\mathrm{Mod} \to R\hbox{-}\mathrm{Mod}$ given by $M \otimes N=M \otimes_k N$ and where the action of $R$ is determined by $g(M \otimes N)=gM \otimes gN$.
 	
 	Note that here the unit object is given by $U$ and the tensor product satisfies $R \otimes_k R \cong R^2$. We will use the notation of the previous example. The table below shows how this tensor product extends to $(R\hbox{-}\mathrm{mod},\mathbf{Ab})^{\mathrm{fp}}$. (See Section 13.5 of \cite{P18} for details of the calculation.)
 	
 	\FloatBarrier	
 	\begin{table}[htbp!]
 		\begin{tabular}{l|lllll}
 			$\otimes$               & $S$        & $T~~~$ & $(U,-)$    & $W$        & $(R,-)$          \\ \hline
 				& & & & & \\
 			$S$                     & $W$        &  0     & $S$        & $W$        & $(R,-)$          \\
 			& & & & & \\
 			$T$                     & 0          & $T$    & $T$        & 0          & 0                \\
 			& & & & & \\
 			$(U,-)$                 & $S$        & $T$    & $(U,-)~~~$ & $W$        & $(R,-)$          \\
 			& & & & & \\
 			$W$                     & $W$        & 0      & $W$        & $W$        & $(R,-)$          \\
 			& & & & & \\
 			$(R,-)$                 & $(R,-)~~~$ & 0      & $(R,-)$    & $(R,-)~~~$ & $(R,-)^2$
 		\end{tabular}
 	\end{table}
 	\FloatBarrier

We get the following definable subcategory/Serre subcategory correspondence, where as required by Corollary \ref{Cfp rigid case}, there is a one-to-one correspondence between the definable tensor-ideals of $R\hbox{-}\mathrm{Mod}$ and the Serre tensor-ideals of $(R\hbox{-}\mathrm{mod},\mathbf{Ab})^{\mathrm{fp}}$.
	
	\FloatBarrier
	\begin{table}[h!]
		\begin{tabular}{l|ll|l|ll}
			Definable & Monoidal & Tensor- & Serre  & Monoidal & Tensor-  \\
			subcategory & subcategory & ideal & subcategory & subcategory & ideal \\ \hline
			& & & & & \\
			0  & Yes &  Yes & $(R\hbox{-}\mathrm{mod},\mathbf{Ab})^{\mathrm{fp}}$ & Yes & Yes \\
			& & & & & \\
			$\big{<}U\big{>}^{\mathrm{def}}$  & Yes & No & $\big{<}S\big{>}^{\mathsf{S}}$ & No  & No  \\
			& & & & & \\
			$\big{<}R\big{>}^{\mathrm{def}}$ & Yes & Yes & $\big{<}T\big{>}^{\mathsf{S}}$ & Yes & Yes          \\
			& & & & & \\
			$R\hbox{-}\mathrm{Mod}$ & Yes & Yes & 0 & Yes & Yes \\
		\end{tabular}
	\end{table}
	\FloatBarrier
\end{example}

\end{document}